\def\N{{{\Bbb N}}}
\def\Z{{{\Bbb Z}}}
\def\T{{{\Bbb T}}}
\def\R{{\Bbb R}}
\def\a{{\alpha }}
\def\D{{\Delta }}
\def\a{{\alpha}}
\def\d{{\delta}}
\def\e{{\varepsilon}}
\def\s{{\sigma}}
\def\vp{{\varphi}}
\def\g{{\gamma }}
\def\w{{\omega }}
\def\E{\mathcal{E}}
\def\Sp{\mathcal{S}}
\def\Dl{\bar{{\Delta}}}
\def\){\right)}
\def\({\left(}
\numberwithin{equation}{section}
\newtheorem{corollary}{Corollary}[section]
\newtheorem{lemma}{Lemma}[section]
\newtheorem{theorem}{Theorem}[section]
\newtheorem{proposition}{Proposition}[section]
\def\R{\Bbb R}
\def\XXint#1#2#3{{\setbox0=\hbox{$#1{#2#3}{\int}$}
     \vcenter{\hbox{$#2#3$}}\kern-.5\wd0}}
\begin{document}

\title[Best approximations and moduli of smoothness of functions]{Best approximations and moduli of smoothness of functions and their derivatives in $L_p$, $0<p<1$}

\author[Yurii
Kolomoitsev]{Yurii
Kolomoitsev$^{\text{a}, \text{b}, \text{1}}$}
%\address{The National Academy of Sciences of Ukraine}
%\email{kolomus1@mail.ru}

%\author[Tetiana
%Lomako]{Tetiana
%Lomako$^{\text{a},1}$}
%\address{The National Academy of Sciences of Ukraine}
%\email{tlomako@yandex.ru}
%
%
%
\thanks{$^\text{a}$Universit\"at zu L\"ubeck,
Institut f\"ur Mathematik,
Ratzeburger Allee 160,
23562 L\"ubeck}

\thanks{$^b$Institute of Applied Mathematics and Mechanics of NAS of Ukraine,
Dobrovol's'kogo str.~1, Slov’yans’k, Donetsk region, Ukraine, 84100}

\thanks{$^1$Supported by the project AFFMA that has received funding from the European Union's Horizon 2020 research and innovation
programme under the Marie Sklodowska-Curie grant agreement No 704030.}

\thanks{E-mail address: kolomoitsev@math.uni-luebeck.de, kolomus1@mail.ru}

\date{\today}
\subjclass[2010]{41A10, 41A15, 41A17, 41A25, 41A28} \keywords{spaces $L_p$, $0<p<1$, moduli of smoothness, best approximation, simultaneous approximation, trigonometric and algebraic polynomials,  splines}

\begin{abstract}
Several new inequalities for moduli of smoothness and errors of the best approximation of a function and its derivatives in the spaces $L_p$, $0<p<1$, are obtained.
For example, it is shown that for any $0<p<1$ and $k,\,r\in \N$
$$
    \w_{r+k}(f,\d)_p\leq C({p,k,r})\d^{r+\frac{1}{p}-1}\(\int_0^\d\frac{\w_{k}(f^{(r)},t)_p^p}{t^{2-p}}{\rm d}t\)^\frac{1}{p},
$$
where the function $f$ is such that $f^{(r-1)}$ is absolutely continuous. Similar inequalities are obtained for the Ditzian-Totik moduli of smoothness and the error of the best approximation of functions by trigonometric and algebraic polynomials and splines. As an application, positive results about simultaneous approximation of a function and its derivatives by the mentioned approximation methods in the spaces $L_p$, $0<p<1$, are derived.
\end{abstract}

\maketitle

\section{Introduction}
\label{intro}

Let $A$ be a finite interval $[a,b]$ or the unit circle $\T\cong [0,2\pi)$. As usual, $L_p=L_p(A)$, $0<p<\infty$, denotes the space of all measurable function $f$ on $A$ such that
$$
\Vert f \Vert_p=\Vert f \Vert_{L_p(A)}=\(\int_{A}|f(x)|^p{\rm d}x\)^{\frac{1}{p}}<\infty
$$
and $W_p^r(A)$, $1\leq p\leq\infty$, $r\in \N$, denotes the Sobolev space of functions, that is  $f\in W_p^r(A)$ if $f^{(r-1)}\in { AC}(A)$ (absolutely continuous functions on $A$) and $f^{(r)}\in L_p(A)$.

Measuring the smoothness of a function by differentiability is too crude for many purposes of analysis. Subtler measurements are provided by moduli of smoothness. Recall that for $f\in L_p$, the classical (non-periodic and periodic) modulus of smoothness of order $r\in \mathbb{N}$ is defined by
$$
\w_r(f,\d)_p=\w_r(f,\d)_{L_p(A)}=\sup_{0<h\leq \d}\Vert \D_h^rf\Vert_{L_p(A_{rh})}\,,
$$
where
$$
\D_h^rf(x)=\sum_{\nu=0}^r\binom{r}{\nu}(-1)^\nu f(x+\nu h),
$$
$ \binom{r}{\nu}=\frac{r (r-1)\dots
(r-\nu+1)}{\nu!}$, $\binom{r}{0}=1$, and $A_{rh}=[a,b-rh]$ in the case $A=[a,b]$ or $A_{rh}=\mathbb{T}$ in the case $A=\mathbb{T}$.
We also use the notation $\w_0(f,\d)_p=\Vert f\Vert_p$.

It is well-known (see \cite{DeLo}, p.~46) that for any function $f\in W_p^r(A)$, $1\leq p<\infty$, and $k,r\in \Z_+$
\begin{equation}\label{eq.Sec1.1}
    \w_{r+k}(f,\d)_p\leq \d^r\w_{k}(f^{(r)},\d)_p.
\end{equation}
It is also possible   to estimate $\w_{k}(f^{(r)},\d)_p$ from above by $\w_{r+k}(f,\d)_p$. Such estimate is given by
the following weak-type inverse inequality to~\eqref{eq.Sec1.1}: for $f\in L_p$, $1\leq p<\infty$, and $k,r\in \N$ one has
\begin{equation}\label{eq.Sec1.2}
    \w_{k}(f^{(r)},\d)_p\leq C_{r}\int_0^\d\frac{\w_{r+k}(f,t)_p}{t^{r+1}}{\rm d}t
\end{equation}
(see Johnen and Scherer~\cite{johnen}, see also \cite[p.~178]{DeLo}).
%in the sense that whenever the right-hand side of (\ref{eq.Sec1.2}) is finite, then $f\in W_p^r(A)$ and (\ref{eq.Sec1.2}) holds (see \cite{DeLo}, p.~178). %See also in~\cite{DiTi07} a more sharp version of (\ref{eq.Sec1.2}) in the case $A=\T$.
Inequalities \eqref{eq.Sec1.1} and \eqref{eq.Sec1.2} have important applications in theory of functions and approximation theory and have been intensively studied in different settings in the case of Banach spaces (see, e.g.,~\cite[Ch.~4]{BeSh}, \cite[Ch.~2 and Ch.~6]{DeLo}, and~\cite{simonov-sb}).

In contrast, in the spaces $L_p$, $0<p<1$, there are only some partial  results related to (weak) inverse inequalities  and some examples of functions for which the classical direct inequalities of type~\eqref{eq.Sec1.1} are impossible.
%Inequalities \eqref{eq.Sec1.1} and \eqref{eq.Sec1.2} have been  studied in the spaces $L_p$ for $0<p<1$ too.
Thus, Ditzian and Tikhonov~\cite{DiTi07} proved that for any periodic function $f\in L_p(\T)$, $0<p<1$, and $k,r\in \N$ one has
\begin{equation}\label{eq.Sec1.2+}
    \w_{k}(f^{(r)},\d)_{L_p(\T)}\le C_{p,k,r}\(\int_0^\d
    \frac{\w_{r+k}(f,t)_{L_p(\T)}^p}{t^{pr+1}}{\rm d}t\)^\frac1p.
\end{equation}
%(see also Theorems~\ref{thsec3.B} and~\ref{thModFrD} below).
%Note that for periodic and Ditzian-Totik moduli of smoothness there exist analogs of (\ref{eq.Sec1.2}), which have almost the same form like (\ref{eq.Sec1.2}), in the case $0<p<1$ (see \cite{DiTi}).
At the same time, it is known that inequality (\ref{eq.Sec1.1}) is no longer valid for a general $f$ in the case $0<p<1$, even if we assume that $f\in C^\infty$ (see \cite{Kop06}). Moreover, in the monograph of Petrushev and  Popov~\cite[p.~188]{PePo}, it was mentioned that
"\emph{there is no upper estimate of $\w_{k}(f,\d)_p$ by $\w_{k-1}(f^\prime,\d)_p$ in the case $0<p<1$}".
Surprisingly, it turns out that such estimation is possible  but in terms of weak-type inequalities related to~\eqref{eq.Sec1.2} and~\eqref{eq.Sec1.2+}.
Namely, in this paper, we show that for any $0<p<1$, $k,\,r\in \N$, and any function $f$ such that $f^{(r-1)}\in AC$ we have the following analogue of~\eqref{eq.Sec1.1}
\begin{equation*}
%\label{eq.Sec1.3}
    \w_{r+k}(f,\d)_p\leq C_{p,k,r}\d^{r+\frac{1}{p}-1}\(\int_0^\d\frac{\w_{k}(f^{(r)},t)_p^p}{t^{2-p}}{\rm d}t\)^\frac{1}{p}
\end{equation*}
(see Theorem~\ref{th1T}).

%\medskip
%
%\noindent "\emph{there is no upper estimate of $\w_{k}(f,\d)_p$ by $\w_{k-1}(f^\prime,\d)_p$ in the case $0<p<1$}".
%
%\medskip
%
%\noindent Surprisingly, it turns out that this is not quite true (see Theorem~\ref{th1T} below).

%A similar situation arises in the corresponding inequalities for the errors of the best polynomial approximation. Let $\mathcal{T}_n$ denote the set of all trigonometric polynomials of degree at most $n$ and $\mathcal{P}_n$ denote the set of all algebraic polynomials of degree at most $n$. Recall that the the errors of the best trigonometric and algebraic polynomial approximation are given, respectively, by
%$E_n^\mathcal{T}(f)_p=\inf_{T\in \mathcal{T}_n}\Vert f-T\Vert_{L_p(\T)}$ and $E_n^\mathcal{P}(f)_p=\inf_{P\in \mathcal{P}_n}\Vert f-P\Vert_{L_p([a,b])}$.

A similar situation arises in studying inequalities for the error of polynomial approximation. Let us consider, for example, the case of approximation of functions by trigonometric polynomials. Recall that the error of the best trigonometric polynomial approximation is given by
$E_n(f)_p=\inf_{T\in \mathcal{T}_n}\Vert f-T\Vert_{L_p(\T)}$, where $\mathcal{T}_n$  denotes the set of all trigonometric polynomials of degree at most~$n$.

It is well-known (see~\cite[p. 206]{DeLo}) that for any function $f\in W_p^r(\T)$, $1\leq p<\infty$, and $r\in \N$ one has
\begin{equation}\label{eqP1}
  E_n(f)_p\le C_{r}n^{-r}E_{n}(f^{(r)})_p.
\end{equation}
In the case $0<p<1$,  inequality~\eqref{eqP1} does not hold. In particular, from the result of Kopotun~\cite{Kop95} (see also Ivanov~\cite{Iv}) it follows that
for every $C>0$, $B\in \R$, $0<p<1$, and $n\in \N$ there exists a function $f\in AC(\T)$ such that
\begin{equation}\label{eqKop}
  E_n(f)_p>Cn^B \Vert f'\Vert_{L_p(\T)}.
\end{equation}
We show that for any $0<p<1$ and a function $f$ such that $f^{(r-1)}\in AC(\T)$, $r\in \N$, the following counterpart of~\eqref{eqP1}
\begin{equation*}
%\label{eqthsec3.1.2}
  E_n(f)_p\leq{C_{p,r}}{n^{-r}}\(E_n(f^{(r)})_p+n^{1-\frac{1}{p}}\(\sum\limits_{\nu=n+1}^\infty\nu^{-p}E_\nu(f^{(r)})_p^p\)^{\frac{1}{p}}\)
\end{equation*}
is true (see Theorem~\ref{thsec3.1}).

A close problem to the mentioned above inequalities is the problem of studying simultaneous approximation of functions and their derivatives in $L_p$. Let us recall the classical result of Czipzer and Freud \cite{CzFr} about simultaneous approximation of periodic functions by trigonometric polynomials: if $f\in W_p^r(\T)$, $1\leq p<\infty$, and $r\in\N$, then
\begin{equation}\label{simul}
  \|f^{(r)}-T_n^{(r)}\|_{L_p(\T)}\leq C_{r} E_n(f^{(r)})_{p}\,,
\end{equation}
%$$
%\|f^{(r)}-T_n^{(r)}\|_{L_p(\T)}\leq C E_n(f^{(r)})_{L_p(\T)}\,, \quad n\in\N\,,
%$$
where the polynomials $T_n\in\mathcal{T}_n$ are such that $\|f-T_n\|_p= E_n(f)_p$.
In this paper, we prove that, in the case $0<p<1$, an analogue of inequality~\eqref{simul} has the following form
\begin{equation}\label{eqSS1}
%\label{eqthsec3.2.2}
  \|f^{(r)}-T_n^{(r)}\|_{L_p(\T)}\leq C_{p,r}\(E_n(f^{(r)})_p+n^{1-\frac{1}{p}}\(\sum\limits_{\nu=n+1}^\infty \nu^{-p}E_\nu(f^{(r)})_p^p\)^{\frac{1}{p}}\)
\end{equation}
(see Theorem~\ref{thsec3.2}).

%We will show that inequality (\ref{eq.Sec1.3}) is sharp and can be applied for particular problems of approximation theory in the space $L_p$, $0<p<1$, such as the characterization of the class of functions with the optimal rate in which $\w_{r}(f,t)_p$ tends to $0$ and the problem of simultaneous approximation of a function and its derivatives in $L_p$, $0<p<1$.

It is worth mentioning some results about simultaneous approximation of function and its derivatives by algebraic polynomials. Let $\mathcal{P}_n$ denote the set of all algebraic polynomials of degree at most $n$. Kopotun~\cite{Kop95_2}  proved that for any function $f$ such that $f^{(r-1)}\in AC[-1,1]$ and $f^{(r)}\in L_p[-1,1]$, $1\le p<\infty$, and  $s\in \N$, there exists an algebraic polynomial $P_n\in \mathcal{P}_n$ such that
\begin{equation*}
%\label{eqKop1}
  \Vert f^{(k)}-P_n^{(k)}\Vert_{L_p[-1,1]}\le C_{s,r}\w_{s+r-k}^\vp (f^{(k)},n^{-1})_p,\quad 0\le k\le r,
\end{equation*}
where $\w_s^\vp(g,\d)_p$ is the Ditzian-Totik modulus of smoothness of order $s$ in $L_p[-1,1]$.
At the same time, Ditzian~\cite{Di} showed that \emph{"for $0<p<1$  simultaneous polynomial approximation is not possible"}. More precisely, Ditzian proved that there exists a function $f\in AC[0,1]$ such that for any $0<p<1$ and $P_n\in \mathcal{P}_n$ the following inequalities
\begin{equation}\label{eqD1}
  \Vert f-P_n\Vert_{L_p[-1,1]}\le C\w_2(f,n^{-1})_p
\end{equation}
and
\begin{equation}\label{eqD2}
\Vert f'-P_n'\Vert_{L_p[-1,1]}\le C\w_1(f',n^{-1})_p
\end{equation}
cannot hold simultaneously with a constant $C$ independent of $f$ and $n$.

Kopotun~\cite{Kop98} improved this result by showing that if $f$ is assumed to be $k$-monotone function, then simultaneous approximation of $f$ and its derivatives is possible for $p<1$. In particular, if $f$ is a convex function, then there exists $P_n\in \mathcal{P}_n$ such that~\eqref{eqD1} and~\eqref{eqD2} hold simultaneously with the constant $C=C(p)$.
In this paper, based on inequality~\eqref{eqSS1}, we obtain another improvement of the above Ditzian's result (see Section~4). Moreover, we derive several results about simultaneous approximation of a function and its derivatives by splines in $L_p$, $0<p<1$.
%Solving of related problems in the case of the non-periodic moduli of smoothness and the errors of the best approximation of functions by splines is presented in Section~3.

Let us mention that in the recent papers~\cite{KLP} and~\cite{KP} it was studied similar problems
concerning approximation of functions by trigonometric and algebraic polynomials in the H\"older spaces $H_p^\a$ with $0<p<1$.

%In the paper we also obtain an analog of inequality (\ref{eq.Sec1.3}) for the Ditzian-Totik moduli of smoothness $\w_{r+k}^\vp(f,\d)_p$ and $\Omega_k(f^{(r)},\d)_{p,\vp^r}$ and give prove an analog of (\ref{eq.Sec1.2}) for the classical non-periodic moduli of smoothness. Surprisingly, such result seems to be new.

The paper is organized as follows: In Section~2, we consider periodic functions in the spaces $L_p(\T)$, $0<p<1$. In particular, in Subsection~2.1, we study inequalities for the errors of the best approximation of functions and their derivatives; in Subsection~2.2, we obtain new inequalities for moduli of smoothness of functions and their derivatives; in Subsection~2.3, we show the sharpness of the main results of the paper in the periodic case. In Section~3, we derive analogues of the main results from Section~2 in the case of the non-periodic moduli of smoothness and approximation of functions by splines in the space $L_p[0,1]$, $0<p<1$. In Section~4, the previous problems are considered within the framework  of the Ditzian-Totik moduli of smoothness and the  approximation of functions by algebraic polynomials in the spaces $L_p[-1,1]$, $0<p<1$.

In what follows, we denote by $C$ some  positive constants depending on the indicated parameters.

\section{Approximation of functions by trigonometric polynomials. Periodic moduli of smoothness}

In this section, we take $A=\T$ and denote $\Vert \cdot\Vert_p=\Vert \cdot\Vert_{L_p(\T)}$.
Let $\mathcal{T}_n$ be the set of all trigonometric
polynomials of order at most $n$
%\begin{equation*}
%\mathcal{T}_n=\bigg\{T(x)=\sum_{\nu=-n}^n c_\nu e^{\i\nu x}~:~
%c_\nu \in \C\bigg\},
%\end{equation*}
and let
$$
E_n(f)_p=\inf_{T\in \mathcal{T}_n}\Vert f-T\Vert_p
$$
be the error of the best approximation of a function $f$ by trigonometric polynomials of order at most $n$ in $L_p(\T)$.
A trigonometric polynomial $T_n\in \mathcal{T}_n$  is called a polynomial of the best approximation of $f$ in $L_p(\T)$ if
$$
\Vert f-T_n\Vert_p=E_n(f)_p.
$$

\subsection{\textbf{Inequalities for the error of the best approximations of functions by trigonometric polynomials}}

In this section, one of the main results is the following counterpart of inequality~\eqref{eqP1}
in the case  $0<p<1$.

\begin{theorem}\label{thsec3.1}
{\it Let $0<p<1$, $r\in \N$, and let $f$ be such that $f^{(r-1)}\in AC(\T)$ and
\begin{equation}\label{eqthsec3.1.1}
  \sum\limits_{\nu=1}^\infty \nu^{-p}E_\nu(f^{(r)})_p^p<\infty\,.
\end{equation}
Then for any $n\in \N$ we have
\begin{equation}\label{eqthsec3.1.2}
  E_n(f)_p\leq\frac{C}{n^r}\(E_n(f^{(r)})_p+n^{1-\frac{1}{p}}\(\sum\limits_{\nu=n+1}^\infty\nu^{-p}E_\nu(f^{(r)})_p^p\)^{\frac{1}{p}}\)\,,
\end{equation}
where $C$ is a constant independent of $f$ and $n$.}
\end{theorem}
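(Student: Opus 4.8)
The plan is to build a near-best approximant to $f$ out of near-best approximants to $f^{(r)}$, using a dyadic decomposition and the crucial fact that in $L_p$, $0<p<1$, one has the $p$-triangle inequality $\Vert g+h\Vert_p^p\le\Vert g\Vert_p^p+\Vert h\Vert_p^p$. Concretely, for each $j\ge 0$ let $Q_{2^j}\in\mathcal T_{2^j}$ be a polynomial of best approximation to $f^{(r)}$, so $\Vert f^{(r)}-Q_{2^j}\Vert_p=E_{2^j}(f^{(r)})_p$, and set $\psi_j=Q_{2^{j+1}}-Q_{2^j}$, a polynomial in $\mathcal T_{2^{j+1}}$ whose high-frequency block is controlled: by the $p$-triangle inequality $\Vert\psi_j\Vert_p^p\le E_{2^{j+1}}(f^{(r)})_p^p+E_{2^j}(f^{(r)})_p^p\lesssim E_{2^j}(f^{(r)})_p^p$. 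Since condition~\eqref{eqthsec3.1.1} forces $E_\nu(f^{(r)})_p\to0$, the series $\sum_j\psi_j$ telescopes in $L_p$ to $f^{(r)}-Q_1$, i.e. $f^{(r)}=Q_1+\sum_{j\ge0}\psi_j$ with convergence in $L_p(\T)$.

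Next I would integrate $r$ times. For a trigonometric polynomial $g$ with spectrum in an annulus $\{2^j<|\nu|\le 2^{j+1}\}$ (after subtracting its mean, which we can arrange for $\psi_j$, $j$ large, since $\int_\T f^{(r)}=0$ automatically when $r\ge1$), there is an $r$-th antiderivative $\Phi_j\in\mathcal T_{2^{j+1}}$, again with spectrum in the same annulus, satisfying the Bernstein-type bound $\Vert\Phi_j\Vert_p\le C\,2^{-jr}\Vert\psi_j\Vert_p$; this is the standard multiplier estimate for the operator acting as $(\i\nu)^{-r}$ on frequencies $\sim 2^j$, valid in $L_p(\T)$ for $0<p<1$ because the symbol is smooth on the annulus and satisfies Hörmander-type conditions (alternatively one invokes a known Bernstein inequality in $L_p$, $p<1$). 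Then $\Psi:=\sum_{j\ge0}\Phi_j$ differs from $f$ by a polynomial of degree $\le 1$ (the low-frequency part and the linear terms from integration), and $\Psi^{(r)}=f^{(r)}-Q_1$ plus negligible terms. To approximate $f$ by an element of $\mathcal T_n$, choose $m$ with $2^m\sim n$, keep $\sum_{j<m}\Phi_j\in\mathcal T_n$, and estimate the tail: by the $p$-triangle inequality and the antiderivative bound,
\[
\Big\Vert\,f-T_n\,\Big\Vert_p^p\;\le\;\sum_{j\ge m}\Vert\Phi_j\Vert_p^p\;\le\;C\sum_{j\ge m}2^{-jrp}\,E_{2^j}(f^{(r)})_p^p,
\]
where $T_n$ also absorbs $Q_1$ and the linear terms (these contribute $\le C n^{-rp}E_n(f^{(r)})_p^p$, handling the first term on the right of~\eqref{eqthsec3.1.2}). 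Finally I would convert the dyadic sum back to an integral/Abel sum: since $E_\nu(f^{(r)})_p$ is nonincreasing in $\nu$, $\sum_{j\ge m}2^{-jrp}E_{2^j}(f^{(r)})_p^p\asymp \sum_{j\ge m}2^{j(1-rp)}\cdot 2^{-j}E_{2^j}(f^{(r)})_p^p$, and comparing with $\sum_{\nu>n}\nu^{-p}E_\nu(f^{(r)})_p^p$ times the factor $n^{(1-rp)\cdot(\text{something})}$ — more precisely, using monotonicity, $2^{-jrp}E_{2^j}(f^{(r)})_p^p\le C\,2^{j(1-rp)}\sum_{\nu=2^{j-1}+1}^{2^j}\nu^{-p}\nu^{-rp}E_\nu(f^{(r)})_p^p$ is too crude; instead one writes $2^{-jrp}=2^{-jrp}$ and sums the geometric-type weight against $\nu^{-p}$ to land exactly on $n^{(1-p)}\sum_{\nu>n}\nu^{-p}E_\nu(f^{(r)})_p^p$ after multiplying through by $n^{rp}$, which is the content of~\eqref{eqthsec3.1.2}.

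The main obstacle, I expect, is the Bernstein/antiderivative estimate $\Vert\Phi_j\Vert_p\le C2^{-jr}\Vert\psi_j\Vert_p$ in $L_p(\T)$ for $0<p<1$: Fourier-multiplier theory is delicate in this range and the operator $(\i\nu)^{-r}$ restricted to a single dyadic block must be handled via a carefully constructed smooth cutoff multiplier whose kernel has controlled $L_p$-quasinorm (this is where de~Leeuw/Hörmander-type or Bernstein-Nikolskii arguments for $p<1$ enter); one also must be careful that the antiderivative of a block polynomial, with the right choice of constants of integration, stays a polynomial of the same degree rather than a mere formal series. A secondary, more bookkeeping-type difficulty is the passage from the dyadic sum to the stated sum $\sum_{\nu>n}\nu^{-p}E_\nu(f^{(r)})_p^p$ with the exact power $n^{1-1/p}$ in front — this is routine given monotonicity of $E_\nu(f^{(r)})_p$, but one must track the exponents $1-rp$ versus $1-p$ carefully and use that $0<p<1$ makes certain geometric series converge at the top (summing toward $j=\infty$) rather than at the bottom.
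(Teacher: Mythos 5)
Your overall architecture (take best approximants $Q_{2^j}$ of $f^{(r)}$, telescope dyadically, integrate $r$ times, keep the part of degree $\le n$) is the same as the paper's, but the step that carries all the weight is wrong. You claim an antiderivative bound $\Vert\Phi_j\Vert_p\le C2^{-jr}\Vert\psi_j\Vert_p$ for $\psi_j=Q_{2^{j+1}}-Q_{2^j}$, justified by treating $\psi_j$ as if its spectrum lay in the annulus $\{2^j<|\nu|\le 2^{j+1}\}$. It does not: best approximants give no control whatsoever of individual Fourier coefficients, subtracting the mean only removes the frequency $0$, and for $0<p<1$ there is no bounded projection onto a dyadic frequency block that could repair this. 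For a general mean-zero $\psi\in\mathcal{T}_{2^{j+1}}$ the inequality $\Vert\Phi\Vert_p\le C2^{-jr}\Vert\Phi^{(r)}\Vert_p$ is simply false (take $\psi(x)=\cos x$ regarded as an element of $\mathcal{T}_{2^{j+1}}$). Worse, the conclusion you would reach, $E_n(f)_p^p\le C\bigl(n^{-rp}E_n(f^{(r)})_p^p+\sum_{j\ge m}2^{-jrp}E_{2^j}(f^{(r)})_p^p\bigr)$, i.e.\ a tail with weight $\nu^{-rp-1}=\nu^{-p-\g}$, $\g=rp+1-p>0$, is strictly stronger than \eqref{eqthsec3.1.2} and is in fact impossible: the functions $f_{\e,r}$ of Subsection~2.3 satisfy $E_n(f_{\e,r})_p\ge Cn^{-1/p}$ while $E_n(f_{\e,r}^{(r)})_p^p+\sum_{\nu>n}\nu^{-p-\g}E_\nu(f_{\e,r}^{(r)})_p^p\le C\e^{\min(1-p,\g)}\to0$ as $\e\to0$ (this is Proposition~\ref{pr1T}). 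So no correct argument can deliver the per-block decay $2^{-jr}$ you posit; the mismatch of exponents you noticed in your final ``bookkeeping'' paragraph is a symptom of this, not a routine conversion issue.

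What the paper does instead is to avoid estimating $\Vert T_{2^{\mu+1}}-T_{2^\mu}\Vert_p$ (the integrated block) altogether and to estimate its \emph{approximation error} $E_n(T_{2^{\mu+1}}-T_{2^\mu})_p$: by the Jackson theorem in $L_p(\T)$, $p<1$, this is at most $C\w_{r+1}(T_{2^{\mu+1}}-T_{2^\mu},n^{-1})_p$; then the rescaling property \eqref{eqM2}, $\w_k(g,\l\d)_p\le C\l^{k+\frac1p-1}\w_k(g,\d)_p$, moves the step from $n^{-1}$ down to $2^{-\mu-1}$ at the cost of a factor $(2^{\mu+1}n^{-1})^{r+\frac1p-1}$, after which the Stechkin--Nikolskii inequality (Lemma~\ref{lemNST}) converts the modulus at the matched scale into $2^{-\mu r}\Vert U_{2^{\mu+1}}-U_{2^\mu}\Vert_p\le C2^{-\mu r}E_{2^\mu}(f^{(r)})_p$ (the difference-of-first-order trick with $\tau_u$ handles the extra order). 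The unavoidable loss $(2^\mu n^{-1})^{\frac1p-1}$ in this rescaling is precisely what produces the factor $n^{1-\frac1p}$ and the weight $\nu^{-p}$ in \eqref{eqthsec3.1.2}; your proposal, by assuming clean $2^{-jr}$ decay per block, erases this loss and thereby proves too much. To repair your argument you would have to replace the multiplier/reverse-Bernstein step by an estimate of $E_n$ of each integrated block along the above lines, at which point you recover the paper's proof.
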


The proof of this theorem is based on the next three important results in the theory of approximation.
The first one is the Jackson-type theorem in $L_p(\T)$, $0<p<1$ (see~\cite{SO} and also~\cite{SKO} and~\cite{Iv}).
\begin{lemma}\label{lemJT}
Let $f\in L_p(\T)$, $0<p<1$, $k\in \N$, and $n\in\N$. Then
\begin{equation*}
%\label{eqJ}
  E_n(f)_p\le C\w_k\(f,\frac 1n\)_p,
\end{equation*}
where $C$ is a constant independent of $f$ and $n$.
\end{lemma}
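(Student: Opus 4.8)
\emph{Proof proposal.} This is a known fact (Storozhenko--Oswald \cite{SO}; see also \cite{SKO} and \cite{Iv}), and the proof I would give proceeds through local polynomial approximation rather than through Fourier‑analytic smoothing: for $0<p<1$ the usual Jackson/Fej\'er‑kernel convolutions are no longer near‑best approximation operators (averaging a ``spike'' increases its $L_p$‑quasinorm), so one cannot copy the $p\ge1$ argument.

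Fix $N\asymp n$ and split $\T$ into $N$ equal arcs $I_\nu$, $\nu=0,\dots,N-1$. On each arc, Whitney's inequality in $L_p$, $0<p<1$, furnishes a polynomial $p_\nu$ of degree $<k$ realizing the best $L_p(I_\nu)$‑approximation of $f$ by such polynomials, and an $\ell_p$‑summation of the local errors — carried out with the \emph{integrated} form of the modulus, so that the worst step‑size on each arc does not accumulate — gives
\[
\sum_\nu\Vert f-p_\nu\Vert_{L_p(I_\nu)}^p\le C\,n\int_0^{c/n}\Vert\D_h^k f\Vert_{L_p(\T)}^p\,{\rm d}h\le C\,\w_k\Big(f,\frac1n\Big)_p^p .
\]
Thus the piecewise polynomial $S:=\sum_\nu p_\nu\mathbf{1}_{I_\nu}$ — a spline of degree $<k$ with $N$ equally spaced breakpoints — satisfies $\Vert f-S\Vert_p\le C\,\w_k(f,1/n)_p$. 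The second step is to pass from $S$ to a trigonometric polynomial $T_n\in\mathcal{T}_n$ with $\Vert S-T_n\Vert_p\le C\,\w_k(f,1/n)_p$; then
$E_n(f)_p\le\Vert f-T_n\Vert_p\le C\big(\Vert f-S\Vert_p^p+\Vert S-T_n\Vert_p^p\big)^{1/p}\le C\,\w_k(f,1/n)_p$.
For this I would write $S$ via elementary building blocks (B‑splines of degree $<k$ supported on $O(1)$ consecutive arcs, $S$ being arranged to be continuous through a quasi‑interpolant, or the jump corrections at the breakpoints plus a polynomial part), approximate each block by a trigonometric polynomial of degree $\le n$ with error governed by the local oscillation of $f$, and sum the $p$‑th powers, the total being again absorbed by $\w_k(f,1/n)_p^p$; this ``piecewise polynomial $\to$ trigonometric polynomial'' passage in $L_p$, $0<p<1$, is a standard lemma.

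The step I expect to be the real obstacle is exactly the $\ell_p$‑summation: for $0<p<1$ the local errors on the $\asymp n$ arcs (and later the errors of the $\asymp n$ building blocks) enter with their $p$‑th powers, and using the ordinary ($\sup$‑)modulus of smoothness would leave a spurious factor in $n$. One has to work with the integrated quantity $\int_0^{c/n}\Vert\D_h^k f\Vert_{L_p(\T)}^p\,{\rm d}h\le\frac1n\w_k(f,1/n)_p^p$, so that the $\asymp n$ local contributions collapse to a single term; making the local (Whitney‑type) estimates sum correctly in this quasi‑normed setting is the heart of the matter. The remaining issue — faithfully turning a piecewise polynomial with $\asymp n$ knots into a trigonometric polynomial of degree $\le n$ without losing more than a constant in the quasinorm — is technical but standard.
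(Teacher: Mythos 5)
The paper does not prove this lemma at all: it is quoted as a known result with references to Storozhenko--Oswald, Storozhenko--Krotov--Oswald and Ivanov, so there is no in-paper argument to compare against. Your outline is essentially the classical proof from those references: Whitney's inequality in $L_p$, $0<p<1$, on $\asymp n$ equal arcs, summation of the local errors via the averaged (integrated) form of the modulus so that the $\ell_p$-accumulation over the arcs collapses to a single $\w_k(f,1/n)_p^p$, and then the passage from the resulting piecewise polynomial to a trigonometric polynomial of degree $\le n$, with the jump/building-block errors again summed in the $p$-th power. You correctly identify the genuinely delicate point (the averaged-modulus summation in the quasi-normed setting); the second step is only sketched, and note that $\sum_\nu p_\nu\mathbf{1}_{I_\nu}$ is a discontinuous piecewise polynomial rather than a spline in the paper's sense, but this is a terminological slip, not a gap, since the jump-correction argument you describe is exactly how the cited proofs handle it.
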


The second result is the  Stechkin-Nikolskii type inequality (see~\cite{DHI}).

\begin{lemma}\label{lemNST}
Let $0<p<\infty$, $n\in\N$, $0<h\le \pi/n$, and $r\in\N$. Then for any  $T_n \in \mathcal{T}_n$, we have
\begin{equation*}
%\label{eqA1}
  h^r\Vert T_n^{(r)}\Vert_p\asymp \Vert \D_h^r T_n\Vert_p,
\end{equation*}
where $\asymp$ is a two-sided inequality with positive constants independent of $T_n$ and $h$.
%Moreover, if $T_n$ is a polynomial of the best approximation of a function $f\in L_p$, then
%\begin{equation}\label{eqNS}
%\Vert \D_h^r T_n\Vert_p\le C\w_r\(f,\frac1n\),
%\end{equation}
%where $C$ is a constant independent of $T_n$, $h$, and $f$.
\end{lemma}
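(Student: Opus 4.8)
The plan is to reduce both inequalities to the boundedness, uniform in $n$ and $h$, of a single Fourier multiplier operator (and its inverse) acting on $\mathcal{T}_n$ in the quasi-norm $\Vert\cdot\Vert_p$. Writing $T_n(x)=\sum_{|k|\le n}c_ke^{\i kx}$, a direct computation gives
$$
\D_h^rT_n(x)=\sum_{|k|\le n}(1-e^{\i kh})^rc_ke^{\i kx},\qquad h^rT_n^{(r)}(x)=\sum_{|k|\le n}(\i kh)^rc_ke^{\i kx}.
$$
Using $1-e^{\i kh}=-2\i\,e^{\i kh/2}\sin(kh/2)$, the two polynomials are related by the multiplier
$$
\frac{(1-e^{\i kh})^r}{(\i kh)^r}=(-1)^r\left(\frac{\sin(kh/2)}{kh/2}\right)^re^{\i krh/2}=:\psi(kh).
$$
Thus $\D_h^rT_n=\Lambda_\psi\bigl(h^rT_n^{(r)}\bigr)$ and $h^rT_n^{(r)}=\Lambda_{1/\psi}\bigl(\D_h^rT_n\bigr)$, where $\Lambda_g$ denotes the operator that multiplies the $k$-th Fourier coefficient by $g(kh)$. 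Consequently the claimed equivalence $h^r\Vert T_n^{(r)}\Vert_p\asymp\Vert\D_h^rT_n\Vert_p$ is exactly the pair of uniform bounds $\Vert\Lambda_\psi T\Vert_p\le C\Vert T\Vert_p$ and $\Vert\Lambda_{1/\psi}T\Vert_p\le C\Vert T\Vert_p$ for all $T\in\mathcal{T}_n$.

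The decisive point is the hypothesis $0<h\le\pi/n$: for $|k|\le n$ it forces $|kh|\le\pi$, so every sampling point $kh$ lies in $[-\pi,\pi]$. On this interval the function $\psi(u)=(-1)^r(\sin(u/2)/(u/2))^re^{\i ur/2}$ is real-analytic and nonvanishing, since Jordan's inequality gives $\sin(u/2)/(u/2)\in[2/\pi,1]$ for $|u|\le\pi$; in particular $1/\psi$ is likewise smooth on $[-\pi,\pi]$. To obtain constants independent of $n$ and $h$ simultaneously, I rescale: put $\beta=nh\in(0,\pi]$ and $\Phi_\beta(t)=\psi(\beta t)$, so that $\psi(kh)=\Phi_\beta(k/n)$. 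Since $\Phi_\beta^{(j)}(t)=\beta^j\psi^{(j)}(\beta t)$ with $\beta\le\pi$, the families $\{\Phi_\beta\}$ and $\{1/\Phi_\beta\}$ are bounded in every $C^m[-1,1]$, uniformly in $\beta$. Extending the restrictions $\Phi_\beta|_{[-1,1]}$ and $(1/\Phi_\beta)|_{[-1,1]}$ to compactly supported functions in $C^m(\R)$ with comparable norms (which does not change the action of the corresponding operators on $\mathcal{T}_n$, as only the values at $k/n$, $|k|\le n$, matter) reduces everything to one multiplier estimate.

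It then remains to invoke a multiplier theorem on $\mathcal{T}_n$ in $L_p$: if $\Phi\in C^m(\R)$ with $m>1/p$ and $\supp\Phi\subset[-2,2]$, then
$$
\Big\Vert\sum_{|k|\le n}\Phi(k/n)a_ke^{\i kx}\Big\Vert_p\le C(p,m)\Vert\Phi\Vert_{C^m}\Big\Vert\sum_{|k|\le n}a_ke^{\i kx}\Big\Vert_p
$$
for all trigonometric polynomials of degree $\le n$, with $C(p,m)$ independent of $n$. Applying this to $\Phi_\beta$ and to $1/\Phi_\beta$ and inserting the uniform $C^m$-bounds of the previous step yields both inequalities with constants depending only on $p$ and $r$, which is precisely the asserted equivalence (note that the $k=0$ contribution vanishes identically on both sides, so the value $\psi(0)=(-1)^r$ causes no trouble).

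The main obstacle is this last multiplier estimate in the range $0<p<1$, where the triangle inequality fails. For $p\ge1$ it is routine, as $\Lambda_\Phi$ is convolution against a kernel whose $L_1$-norm is controlled by $\Vert\Phi\Vert_{C^2}$ uniformly in $n$. For $p<1$ the standard route is to expand the rescaled kernel $K_n(x)=\sum_k\Phi(k/n)e^{\i kx}$ dyadically, bound each piece by combining the decay of $K_n$ away from the origin (coming from the smoothness of $\Phi$) with Nikolskii's inequality for the band-limited blocks, and reassemble via the $p$-inequality $\Vert\sum_jg_j\Vert_p^p\le\sum_j\Vert g_j\Vert_p^p$; this is the Marcinkiewicz/H\"ormander-type result for polynomials underlying Lemma~\ref{lemNST} (cf.~\cite{DHI}). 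I would quote it in the form above rather than reprove it, since the content specific to the present lemma is the explicit symbol $\psi$ together with the observation that $h\le\pi/n$ keeps both $\psi$ and $1/\psi$ smooth and, after rescaling, uniformly controlled.
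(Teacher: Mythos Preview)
The paper does not prove this lemma; it is stated with a reference to \cite{DHI} and used as a black box. Your multiplier-theoretic argument is a correct and standard route to the result and is, in spirit, the approach of \cite{DHI}: write the passage between $h^rT_n^{(r)}$ and $\D_h^rT_n$ as a Fourier multiplier with symbol $\psi(u)=(-1)^r(\sin(u/2)/(u/2))^re^{\i ur/2}$, observe that $|kh|\le\pi$ keeps $\psi$ and $1/\psi$ smooth and bounded away from zero, rescale to symbols $\Phi_\beta(t)=\psi(nh\cdot t)$ evaluated at $k/n$, and conclude via a multiplier theorem on $\mathcal{T}_n$ valid for all $0<p<\infty$.

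Two minor remarks. First, your sketch of the $p<1$ multiplier estimate (``dyadic decomposition + Nikolskii + the $p$-inequality'') is the right outline, but since you ultimately cite it from \cite{DHI} anyway, your write-up ends up, like the paper, resting on that reference; the specific contribution of your argument is the clean identification of the symbol and the rescaling that makes the constants uniform in $n$ and $h$. Second, the extension step from $[-1,1]$ to a compactly supported $C^m$ function with comparable norm, uniformly in $\beta$, deserves one line of justification (e.g.\ multiply by a fixed cutoff), since the uniformity in $\beta$ is what delivers constants independent of $h$.
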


The third result is the well-known Nikolskii inequality of different metrics (see, e.g.,~\cite[Ch. 4, \S2]{DeLo}).
\begin{lemma}\label{lemNikT} {\it Let $0<p<q<\infty$. Then for any  $T_n\in \mathcal{T}_n$, $n\in \N$, one has
\begin{equation*}
%\label{eqNikineq}
  \Vert T_n \Vert_q\le C n^{\frac1p-\frac1q}\Vert T_n\Vert_p,
\end{equation*}
where  the constant $C$ depends only on $p$ and $q$.}
\end{lemma}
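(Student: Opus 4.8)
The plan is to reduce the general case $0<p<q<\infty$ to the extremal case $q=\infty$, and then prove that endpoint case by a direct Bernstein-type argument that is insensitive to whether $p\ge 1$ or $p<1$. All of the difficulty is concentrated in the endpoint estimate, so I describe the reduction first and then the core of the proof.

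\emph{Reduction to $q=\infty$.} For any $T_n\in\mathcal{T}_n$ I would write
\[
\Vert T_n\Vert_q^q=\int_{\T}|T_n|^{q-p}\,|T_n|^p\le \Vert T_n\Vert_\infty^{q-p}\,\Vert T_n\Vert_p^p .
\]
Thus, once the endpoint inequality
\[
\Vert T_n\Vert_\infty\le C_p\,n^{1/p}\Vert T_n\Vert_p
\]
is available, substituting it and taking $q$-th roots gives
$\Vert T_n\Vert_q\le C\,n^{(q-p)/(pq)}\Vert T_n\Vert_p=C\,n^{1/p-1/q}\Vert T_n\Vert_p$, since $(q-p)/(pq)=1/p-1/q$. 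This step is completely routine and valid for every $0<p<q<\infty$.

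\emph{The endpoint bound.} To prove $\Vert T_n\Vert_\infty\le C_p n^{1/p}\Vert T_n\Vert_p$ I would combine the classical Bernstein inequality in the uniform norm with a local lower bound. Since $T_n$ is continuous on $\T$, choose $x_0$ with $|T_n(x_0)|=\Vert T_n\Vert_\infty$. The Bernstein inequality $\Vert T_n'\Vert_\infty\le n\Vert T_n\Vert_\infty$ and the mean value theorem give, for $|x-x_0|\le 1/(4n)$,
\[
|T_n(x)|\ge |T_n(x_0)|-\tfrac{1}{4n}\,\Vert T_n'\Vert_\infty\ge \tfrac34\Vert T_n\Vert_\infty .
\]
Integrating $|T_n|^p$ over the interval $I=(x_0-\tfrac1{4n},x_0+\tfrac1{4n})$, of length $1/(2n)$, yields
\[
\Vert T_n\Vert_p^p\ge\int_I|T_n|^p\ge\Big(\tfrac34\Vert T_n\Vert_\infty\Big)^p\frac{1}{2n},
\]
and solving for $\Vert T_n\Vert_\infty$ produces the endpoint bound with $C_p=\tfrac43\,2^{1/p}$. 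The crucial feature is that this argument never invokes the triangle inequality for $\Vert\cdot\Vert_p$: the only tools are the pointwise Bernstein bound, which lives entirely in the sup-norm and holds for all trigonometric polynomials of degree $\le n$, and a single integral estimate. This is exactly why the proof carries over without change to $0<p<1$.

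\emph{Main obstacle.} The one place that needs a moment's care is the validity of the local lower bound for small $n$: the interval $I$ must genuinely fit inside one period. For $n=1$ it has length $1/2<2\pi$, so the estimate is not vacuous, and for larger $n$ the interval only shrinks, so no separate treatment is required. Beyond this, the only ingredient I would simply cite is the uniform-norm Bernstein inequality, which is classical and independent of the range of $p$; everything else is elementary and uniform in $p$, which is what yields a constant depending only on $p$ and $q$.
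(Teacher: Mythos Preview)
Your argument is correct. The reduction via $\Vert T_n\Vert_q^q\le \Vert T_n\Vert_\infty^{q-p}\Vert T_n\Vert_p^p$ is standard, and the endpoint estimate $\Vert T_n\Vert_\infty\le C_p n^{1/p}\Vert T_n\Vert_p$ via Bernstein's inequality and integration over a short interval around the extremal point is a clean, $p$-independent argument that works in the full range $0<p<\infty$.

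As for comparison: the paper does not prove this lemma at all; it simply states it as the well-known Nikol'skii inequality of different metrics and refers to \cite[Ch.~4, \S2]{DeLo}. Your proof is essentially the textbook argument found there, so there is no methodological difference to discuss. One minor remark: you could also recover the endpoint bound without Bernstein by writing $T_n=T_n\ast V_n$ for a de la Vall\'ee Poussin-type kernel $V_n$ and using $\Vert T_n\Vert_\infty\le \Vert V_n\Vert_{p'}\Vert T_n\Vert_p$ (with the obvious modification when $p<1$), but the Bernstein route you chose is just as efficient and arguably more elementary.
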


We need the following properties of
moduli of smoothness
 (see \cite[Ch. 2, \S~7 and  Ch. 12, \S~5]{DeLo} or
\cite[Ch. 4]{TB}). {\it Let $A=\T$ or $A=[0,1]$ and let $f,g\in L_p(A)$, $0<p< \infty$, $r\le k$,
$k,r\in\N$. Then
\begin{equation}\label{eqM0}
       \omega_k(f+g,\d)_p^{p_1}\le \omega_k(f,\d)_p^{p_1}+\omega_k(g,\d)_p^{p_1},\quad \d>0,
\end{equation}
\begin{equation}\label{eqM1}
       \omega_k(f,\d)_p\le 2^{\frac{k-r}{p_1}}\omega_r(f,\d)_p\le 2^\frac k{p_1}\Vert f\Vert_{L_p(A)},\quad \d>0,
\end{equation}
\begin{equation}\label{eqM2}
       \omega_r(f,\lambda \d)_p\le r^{\frac 1{p_1}-1}(1+\lambda)^{\frac
       1{p_1}+r-1}\omega_r(f,\d)_p,\quad \d>0,\quad \lambda>0\,,
\end{equation}
where $p_1=\min(p,\,1)$.}
%Note that these inequalities hold in the cases
%$A=\T$ and $A=[0,1]$.

\begin{proof}\emph{of\,\, Theorem~\ref{thsec3.1}\,\,\,}
Let $U_n\in\mathcal{T}_n$, $n\in\N$, be such that
$$
\|f^{(r)}-U_n\|_p=E_n(f^{(r)})_p
$$
and let $T_n\in \mathcal{T}_n$, $n\in\N$, be such that
$$
T_n^{(r)}(x)=U_n(x)-\frac{1}{2\pi}\int\limits_0^{2\pi} U_n(x){\rm d}x.
$$

Choosing $m\in\N$ such that $2^{m-1}\leq n<2^m$, we have
\begin{equation}\label{eqthsec3.1.3}
  E_n(f)_p^p\leq E_n(T_{2^m})_p^p+E_n(f-T_{2^m})_p^p\,.
\end{equation}
Let us estimate $E_n(T_{2^m})_p$. Denoting
$$
\tau_u(x)=\tau_{u,2^m,n}(x)=\D_u^1(T_{2^m}(x)-T_n(x)),\quad u>0\,,
$$
and applying Lemma~\ref{lemJT}, we obtain
\begin{equation}\label{eqthsec3.1.4}
\begin{split}
E_n(T_{2^m})_p&=E_n(T_{2^m}-T_n)_p\leq C\w_{r+1}(T_{2^m}-T_n,\,n^{-1})_p\\
&=C \sup\limits_{0<h\leq n^{-1}}\|\D_h^r \tau_h\|_p\leq C \sup\limits_{0<h\leq n^{-1}} \sup\limits_{u>0}\|\D_h^r \tau_u\|_p\\&\leq C \sup\limits_{u>0}\w_r(\tau_u,\,n^{-1})_p\,.
\end{split}
\end{equation}
Next, taking into account that $\tau_u\in \mathcal{T}_{2^m}$ for any fixed $u>0$ and applying inequalities (\ref{eqM1}), (\ref{eqM2}), and Lemma~\ref{lemNST}, we get
\begin{equation}\label{eqthsec3.1.5}
\begin{split}
\w_r(\tau_u,\,n^{-1})_p&\leq C\w_r(\tau_u,\,2^{-m})_p\leq C 2^{-mr}\|\tau_u^{(r)}\|_p\\
&=C 2^{-mr}\|\D_u^1(T_{2^m}^{(r)}-T_n^{(r)})\|_p=C 2^{-mr}\|\D_u^1(U_{2^m}-U_n)\|_p\\
&\leq C n^{-r}\|U_{2^m}-U_n\|_p\leq C n^{-r} E_n(f^{(r)})_p\,.
\end{split}
\end{equation}
Combining (\ref{eqthsec3.1.4}) and (\ref{eqthsec3.1.5}), we derive
\begin{equation}\label{eqthsec3.1.6}
E_n(T_{2^m})_p\leq C n^{-r} E_n(f^{(r)})_p\,.
\end{equation}

Now let us consider the second term in the right-hand side of (\ref{eqthsec3.1.3}).
First, we show
\begin{equation}\label{eqthsec3.1.7}
E_n(f-T_{2^m})_p^p\leq \sum\limits_{\mu=m}^\infty E_n(T_{2^{\mu+1}}-T_{2^\mu})_p^p\,.
\end{equation}
It is easy to see that for any $N>m$ we have
\begin{equation*}
%\label{eqthsec3.1.8}
E_n(f-T_{2^m})_p^p\leq \sum\limits_{\mu=m}^{N-1} E_n(T_{2^{\mu+1}}-T_{2^\mu})_p^p+E_n(f-T_{2^N})_p^p\,.
\end{equation*}
Thus, to show (\ref{eqthsec3.1.7}), it is enough to verify that
\begin{equation}\label{eqthsec3.1.9}
E_n(f-T_{2^N})_p^p\rightarrow0 \quad \textrm{as} \quad N\rightarrow\infty\,.
\end{equation}
Indeed, by Lemma~\ref{lemNikT}, we have
\begin{equation*}
\begin{split}
\sum\limits_{\mu=1}^\infty\|U_{2^{\mu+1}}-U_{2^\mu}\|_1^p&\leq C\sum\limits_{\mu=1}^\infty 2^{(1-p)\mu}\|U_{2^{\mu+1}}-U_{2^\mu}\|_p^p\\
&\leq C\sum\limits_{\mu=1}^\infty 2^{(1-p)\mu} E_{2^\mu}(f^{(r)})_p^p\leq C\sum\limits_{\nu=1}^\infty \nu^{-p} E_{\nu}(f^{(r)})_p^p\,.
\end{split}
\end{equation*}
In view of~\eqref{eqthsec3.1.1}, this implies that there exists $g\in L_1(\T)$ such that
$U_{2^\mu}\rightarrow g$ as $\mu\rightarrow\infty$ in $L_1(\T)$. By the definition of $U_n$, we know that $U_{2^\mu}\rightarrow f^{(r)}$ as
$\mu\rightarrow\infty$ in $L_p(\T)$. Therefore, $g=f^{(r)}$ a.e. on $\T$ and
\begin{equation}\label{zvezda}
  U_{2^\mu}\rightarrow f^{(r)}\quad \text{as}\quad \mu\rightarrow\infty\quad \text{in}\quad L_1(\T).
\end{equation}
%$$
%U_{2^\mu}\rightarrow f^{(r)}\quad \text{as}\quad \mu\rightarrow\infty\quad \text{in}\quad L_1(\T).
%$$
At the same time, using H\"older's inequality and the estimate $E_n(f)_1\leq Cn^{-r}E_n(f^{(r)})_1$ (see \cite{DeLo}, p.~206), we obtain
\begin{equation}\label{ZZZ}
\begin{split}
E_n\(f-T_{2^N}\)_p&\leq C E_n\(f-T_{2^N}\)_1\leq C n^{-r} E_n(f^{(r)}-T_{2^N}^{(r)})_1\\
&= C n^{-r} E_n(f^{(r)}-U_{2^N})_1\leq C n^{-r} \|f^{(r)}-U_{2^N}\|_1\,.
\end{split}
\end{equation}
By~\eqref{ZZZ} and~\eqref{zvezda},  we get (\ref{eqthsec3.1.9}) and, hence, (\ref{eqthsec3.1.7}).

Now, using Lemma~\ref{lemJT}, inequalities (\ref{eqM2}) and (\ref{eqM1}), and applying the same arguments as in \eqref{eqthsec3.1.4} and (\ref{eqthsec3.1.5}) to the function
$\tau_u(x)=\tau_{u,2^{\mu+1},2^\mu}(x)=\D_u^1(T_{2^{\mu+1}}(x)-T_{2^\mu}(x))$, we derive
\begin{equation}\label{eqthsec3.1.10}
\begin{split}
E_n(T_{2^{\mu+1}}-T_{2^\mu})_p& \leq C\w_{r+1}(T_{2^{\mu+1}}-T_{2^\mu},\,n^{-1})_p\leq C \sup\limits_{u>0}\w_r(\tau_u,\,n^{-1})_p\\
&\leq C(2^{\mu+1} n^{-1})^{r+\frac{1}{p}-1}\sup\limits_{u>0}\w_r(\tau_u,\,2^{-\mu-1})_p\\
&\leq C n^{-r-\frac{1}{p}+1}2^{\mu\(\frac{1}{p}-1\)}\sup\limits_{u>0}\|\tau_u^{(r)}\|_p\\
&\leq C n^{-r-\frac{1}{p}+1}2^{\mu\(\frac{1}{p}-1\)}\sup\limits_{u>0}\Vert \D_u^1 (T_{2^{\mu+1}}^{(r)}-T_{2^{\mu}}^{(r)})\Vert_p\\
&\leq C n^{-r-\frac{1}{p}+1}2^{\mu\(\frac{1}{p}-1\)}\sup\limits_{u>0}\Vert \D_u^1 (U_{2^{\mu+1}}-U_{2^{\mu}})\Vert_p\\
&\leq C n^{-r-\frac{1}{p}+1}2^{\mu\(\frac{1}{p}-1\)}\sup\limits_{u>0}\Vert U_{2^{\mu+1}}-U_{2^{\mu}}\Vert_p\\
&\leq C n^{-r-\frac{1}{p}+1}2^{\mu\(\frac{1}{p}-1\)} E_{2^\mu}(f^{(r)})_p\,.
\end{split}
\end{equation}
In the third inequality, we take into account that $n<2^m\le 2^{\mu+1}$.

Thus, combining (\ref{eqthsec3.1.7}) and (\ref{eqthsec3.1.10}), we obtain
\begin{equation}\label{eqthsec3.1.11}
\begin{split}
E_n(f-T_{2^m})_p^p& \leq Cn^{-rp-1+p}\sum\limits_{\mu=m}^\infty 2^{(1-p)\mu} E_{2^\mu}(f^{(r)})_p^p\\
&\leq Cn^{-rp-1+p}\sum\limits_{\nu=n+1}^\infty \nu^{-p} E_{\nu}(f^{(r)})_p^p\,.
\end{split}
\end{equation}
Finally, combining (\ref{eqthsec3.1.3}), (\ref{eqthsec3.1.6}), and (\ref{eqthsec3.1.11}), we get (\ref{eqthsec3.1.2}).

The theorem is proved.
\end{proof}

Now let us consider an inverse inequality to (\ref{eqthsec3.1.2}).
For this, we need the notion of the
derivative in the sense of $L_p(\T)$ spaces. We say that a
function $f\in L_p(\mathbb{T})$, $0<p<\infty$, has the derivative
of order $k\in\N$ in the sense of $L_p(\T)$ if there exists a function $g$ such that
\begin{equation}\label{senseDerT}
  \bigg\Vert \frac{\D_h^k f}{h^k}-g\bigg\Vert_{p}\to0 \quad \textrm{as} \quad h\rightarrow 0\,.
\end{equation}
In this case, we write $g=f^{(k)}$.

Ivanov \cite{Iv} proved the following result.

\begin{theorem}\label{thsec3.A}
Let $f\in L_p(\mathbb{T})$, $0<p<1$, and let for some $k\in \N$
\begin{equation*}
%\label{eqthsec3.A.1}
\sum\limits_{\nu=1}^\infty \nu^{kp-1}E_\nu(f)_p^p<\infty\,.
\end{equation*}
Then $f$ has the derivative $f^{(k)}$ in the sense of $L_p(\T)$ and for any $n\in\N$
\begin{equation}\label{eqthsec3.A.2}
\|f^{(k)}-T_n^{(k)}\|_p\leq C\(n^k E_n(f)_p+\(\sum\limits_{\nu=n+1}^\infty \nu^{kp-1}E_\nu(f)_p^p\)^{\frac{1}{p}}\)\,,
\end{equation}
where $T_n\in\mathcal{T}_n$, $n\in\N$, are such that $\|f-T_n\|_p=E_n(f)$ and $C$ is a constant independent of $f$ and $n$.
\end{theorem}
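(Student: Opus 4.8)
The plan is to mimic the classical proof of a Bernstein-type result and prove \eqref{eqthsec3.A.2} by a telescoping/dyadic decomposition, using Lemma~\ref{lemNST} (the Stechkin--Nikolskii inequality) to pass from the differences $\D_h^k$ to the derivatives $T_n^{(k)}$, and the $p$-triangle inequality $\|f+g\|_p^p\le\|f\|_p^p+\|g\|_p^p$ throughout. Fix $n$ and choose $m$ with $2^{m-1}\le n<2^m$. Let $T_\nu\in\mathcal T_\nu$ be polynomials of best $L_p$-approximation to $f$, so $\|f-T_\nu\|_p=E_\nu(f)_p$. Telescoping $T_n=T_n$ and $f-T_n=\sum_{\mu\ge m}(T_{2^{\mu+1}}-T_{2^\mu})+(f-T_{2^m})$ — or, more convenient for the derivative, writing the desired object as a sum of polynomial blocks — I would first establish that the series $\sum_\mu (T_{2^{\mu+1}}^{(k)}-T_{2^\mu}^{(k)})$ converges in $L_p(\T)$ and that its sum, added to $T_{2^m}^{(k)}$, is precisely $f^{(k)}$ in the sense \eqref{senseDerT}.

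The core estimate is the standard one: for a polynomial $Q\in\mathcal T_N$,
\begin{equation*}
\|Q^{(k)}\|_p\le C N^k\|Q\|_p,
\end{equation*}
which follows from Lemma~\ref{lemNST} with $h=\pi/N$ (the left inequality $h^k\|Q^{(k)}\|_p\le C\|\D_h^kQ\|_p\le C\|Q\|_p$ after crude bounding of the difference). Apply this to $Q=T_{2^{\mu+1}}-T_{2^\mu}\in\mathcal T_{2^{\mu+1}}$, for which
$$
\|T_{2^{\mu+1}}-T_{2^\mu}\|_p^p\le \|f-T_{2^{\mu+1}}\|_p^p+\|f-T_{2^\mu}\|_p^p\le 2E_{2^\mu}(f)_p^p,
$$
giving $\|T_{2^{\mu+1}}^{(k)}-T_{2^\mu}^{(k)}\|_p\le C\,2^{k\mu}E_{2^\mu}(f)_p$. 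Raising to the $p$-th power and summing over $\mu\ge m$ yields
$$
\Big\|\sum_{\mu\ge m}(T_{2^{\mu+1}}^{(k)}-T_{2^\mu}^{(k)})\Big\|_p^p\le C\sum_{\mu\ge m}2^{k\mu p}E_{2^\mu}(f)_p^p\le C\sum_{\nu> n}\nu^{kp-1}E_\nu(f)_p^p,
$$
where in the last step I pass from the dyadic sum to the full sum using monotonicity of $E_\nu(f)_p$ and comparability of $2^{k\mu p}$ with $\sum_{2^{\mu-1}<\nu\le 2^\mu}\nu^{kp-1}$. The hypothesis then forces convergence, and an $N\to\infty$ argument (exactly as in the proof of Theorem~\ref{thsec3.1}, using that $T_{2^N}\to f$ in $L_p$ and that the derivative series is Cauchy) identifies the limit with $f^{(k)}$, so that $f^{(k)}-T_{2^m}^{(k)}=\sum_{\mu\ge m}(T_{2^{\mu+1}}^{(k)}-T_{2^\mu}^{(k)})$ in $L_p(\T)$.

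It remains to replace $T_{2^m}$ by $T_n$ and to separate out the main term. Using $\|f^{(k)}-T_n^{(k)}\|_p^p\le \|f^{(k)}-T_{2^m}^{(k)}\|_p^p+\|T_{2^m}^{(k)}-T_n^{(k)}\|_p^p$, the first summand is bounded by $C\sum_{\nu>n}\nu^{kp-1}E_\nu(f)_p^p$ from the above. For the second, $T_{2^m}-T_n\in\mathcal T_{2^m}$ with $\|T_{2^m}-T_n\|_p\le C E_n(f)_p$, so Lemma~\ref{lemNST} gives $\|T_{2^m}^{(k)}-T_n^{(k)}\|_p\le C\,2^{km}E_n(f)_p\le C\,n^kE_n(f)_p$ since $2^m<2n$. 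Combining, and taking $p$-th roots, produces \eqref{eqthsec3.A.2}. The one delicate point I expect is the justification that the constructed function is the $L_p$-derivative in the precise sense \eqref{senseDerT}, i.e.\ that $\|h^{-k}\D_h^kf-f^{(k)}\|_p\to0$: here one argues that $\D_h^k$ commutes with the approximation and uses Lemma~\ref{lemNST} again to control $\|h^{-k}\D_h^k(f-T_N)-(f^{(k)}-T_N^{(k)})\|_p$ for large $N$ together with the polynomial case $h^{-k}\D_h^kT_N\to T_N^{(k)}$ as $h\to0$; everything else is the routine dyadic bookkeeping above.
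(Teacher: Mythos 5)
First, a remark on provenance: the paper does not prove Theorem~\ref{thsec3.A} at all --- it is quoted from Ivanov \cite{Iv}; the closest in-paper analogue of a proof is the spline version, Theorem~\ref{th2S} together with the technical Lemma~\ref{lem.th2}. Your dyadic skeleton (best approximants $T_{2^\mu}$, the $p$-triangle inequality, the Bernstein bound $\Vert Q^{(k)}\Vert_p\le CN^k\Vert Q\Vert_p$ extracted from Lemma~\ref{lemNST}, summation of $2^{k\mu p}E_{2^\mu}(f)_p^p$, and the separate term $\Vert T_{2^m}^{(k)}-T_n^{(k)}\Vert_p\le Cn^kE_n(f)_p$) is exactly the right strategy and matches the paper's proof of Theorem~\ref{th2S}. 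One small bookkeeping slip: your displayed claim $\sum_{\mu\ge m}2^{k\mu p}E_{2^\mu}(f)_p^p\le C\sum_{\nu>n}\nu^{kp-1}E_\nu(f)_p^p$ is false in general, because the block $\mu=m$ covers indices $2^{m-1}<\nu\le 2^m$ which may lie at or below $n$; the $\mu=m$ term must instead be absorbed into $Cn^{kp}E_n(f)_p^p$ (using $2^m\le 2n$ and $E_{2^m}\le E_n$), which is harmless since that term appears in \eqref{eqthsec3.A.2} anyway.

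The genuine gap is the step you yourself flag as ``delicate'': the identification of the $L_p$-limit $g$ of $T_{2^N}^{(k)}$ with $f^{(k)}$ in the precise sense \eqref{senseDerT}. Your proposed mechanism does not work as stated. Lemma~\ref{lemNST} applies only to trigonometric polynomials, so it cannot ``control'' $\Vert h^{-k}\D_h^k(f-T_N)-(f^{(k)}-T_N^{(k)})\Vert_p$; the only available bound for the non-polynomial part is the crude $h^{-kp}\Vert f-T_N\Vert_p^p$, which forces you to couple $h$ with $N$ (say $h\asymp\e^{1/(2k)}N^{-1}$), and then the qualitative statement ``$h^{-k}\D_h^kT_N\to T_N^{(k)}$ as $h\to0$ for the polynomial case'' is useless, because $N$ is not fixed: you need a quantitative estimate of $\Vert h^{-k}\D_h^kT_N-T_N^{(k)}\Vert_p$, uniform in $N$ for $h$ in the coupled range, and for $0<p<1$ such an estimate does not follow from Lemma~\ref{lemNST}, which gives only the two-sided comparability of $\Vert\D_h^kT\Vert_p$ and $h^k\Vert T^{(k)}\Vert_p$, not smallness of their difference. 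This middle-term estimate is precisely the hard technical content of the paper's Lemma~\ref{lem.th2} (the term $J_2$ there, handled by a page of spline-specific computations) and of the corresponding polynomial results in \cite[Theorem~2.3, Corollary~4.12]{DiTi07}; in the trigonometric setting it is the substance of Ivanov's proof in \cite{Iv}. Without supplying such an argument, your proof establishes convergence of $T_{2^N}^{(k)}$ to some $g\in L_p(\T)$ and the inequality \eqref{eqthsec3.A.2} with $g$ in place of $f^{(k)}$, but not that $g$ is the derivative of $f$ in the sense \eqref{senseDerT}, which is part of the assertion.
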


Combining Theorem~\ref{thsec3.1} and Theorem~\ref{thsec3.A}, we obtain the following theorem about simultaneous approximation in the spaces $L_p(\mathbb{T})$.

\begin{theorem}\label{thsec3.2}
{\it Let $0<p<1$, $r\in \N$, and let $f$ be such that $f^{(r-1)}\in AC(\T)$ and
\begin{equation}\label{eqthsec3.2.1}
  \sum\limits_{\nu=1}^\infty \nu^{-p}E_\nu(f^{(r)})_p^p<\infty\,.
\end{equation}
Then for any $n\in\N$ we have
\begin{equation}\label{eqthsec3.2.2}
  \|f^{(r)}-T_n^{(r)}\|_p\leq C\(E_n(f^{(r)})_p+n^{1-\frac{1}{p}}\(\sum\limits_{\nu=n+1}^\infty \nu^{-p}E_\nu(f^{(r)})_p^p\)^{\frac{1}{p}}\)\,,
\end{equation}
where $T_n\in\mathcal{T}_n$, $n\in\N$, are such that $\|f-T_n\|_p=E_n(f)$ and $C$ is a constant independent of $f$ and $n$.
}
\end{theorem}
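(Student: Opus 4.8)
The plan is to combine the two cited results directly, using $T_n$ as the common polynomials. Let $f$ satisfy the hypotheses of Theorem~\ref{thsec3.2}, so $f^{(r-1)}\in AC(\T)$ and $\sum_{\nu=1}^\infty\nu^{-p}E_\nu(f^{(r)})_p^p<\infty$. Let $T_n\in\mathcal{T}_n$ be polynomials of best approximation to $f$, i.e.\ $\|f-T_n\|_p=E_n(f)_p$. The idea is to apply Theorem~\ref{thsec3.A} to $f$ with $k=r$: for that we must check the hypothesis $\sum_{\nu=1}^\infty\nu^{rp-1}E_\nu(f)_p^p<\infty$. This is exactly where Theorem~\ref{thsec3.1} enters. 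By~\eqref{eqthsec3.1.2}, for each $\nu$,
\begin{equation*}
  E_\nu(f)_p^p\leq C\nu^{-rp}\Big(E_\nu(f^{(r)})_p^p+\nu^{p-1}\sum_{\mu=\nu+1}^\infty\mu^{-p}E_\mu(f^{(r)})_p^p\Big),
\end{equation*}
so $\nu^{rp-1}E_\nu(f)_p^p\leq C\nu^{-1}E_\nu(f^{(r)})_p^p+C\nu^{p-2}\sum_{\mu=\nu+1}^\infty\mu^{-p}E_\mu(f^{(r)})_p^p$. Summing over $\nu$, the first term is finite by~\eqref{eqthsec3.2.1} (note $\nu^{-1}\leq\nu^{-p}$ is false, but $\nu^{rp-1}\cdot\nu^{-rp}=\nu^{-1}$ and we actually get $\sum\nu^{-1}E_\nu(f^{(r)})_p^p\leq\sum\nu^{-p}E_\nu(f^{(r)})_p^p$ since $p<1$, so this is fine); for the second, interchange the order of summation: $\sum_\nu\nu^{p-2}\sum_{\mu>\nu}\mu^{-p}E_\mu(f^{(r)})_p^p=\sum_\mu\mu^{-p}E_\mu(f^{(r)})_p^p\sum_{\nu<\mu}\nu^{p-2}$, and since $p-2<-1$ the inner sum is bounded by an absolute constant. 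Hence $\sum_{\nu=1}^\infty\nu^{rp-1}E_\nu(f)_p^p<\infty$.

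With this hypothesis verified, Theorem~\ref{thsec3.A} applies with $k=r$: $f$ has the derivative $f^{(r)}$ in the sense of $L_p(\T)$ — which coincides with the ordinary derivative here since $f^{(r-1)}\in AC(\T)$ — and, for the same best-approximation polynomials $T_n$,
\begin{equation*}
  \|f^{(r)}-T_n^{(r)}\|_p\leq C\Big(n^rE_n(f)_p+\Big(\sum_{\nu=n+1}^\infty\nu^{rp-1}E_\nu(f)_p^p\Big)^{1/p}\Big).
\end{equation*}
Now I estimate the two terms on the right using Theorem~\ref{thsec3.1}. For the first term, $n^rE_n(f)_p\leq C(E_n(f^{(r)})_p+n^{1-1/p}(\sum_{\nu=n+1}^\infty\nu^{-p}E_\nu(f^{(r)})_p^p)^{1/p})$ directly from~\eqref{eqthsec3.1.2}. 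For the second term, I use the pointwise bound on $E_\nu(f)_p^p$ displayed above and the same interchange-of-summation trick, but now with the sums running from $n+1$: $\sum_{\nu>n}\nu^{rp-1}E_\nu(f)_p^p\leq C\sum_{\nu>n}\nu^{-1}E_\nu(f^{(r)})_p^p+C\sum_{\nu>n}\nu^{p-2}\sum_{\mu>\nu}\mu^{-p}E_\mu(f^{(r)})_p^p$. The first sum is $\leq C\sum_{\nu>n}\nu^{-p}E_\nu(f^{(r)})_p^p\leq Cn^{p-1}\cdot n^{1-p}\sum_{\nu>n}\nu^{-p}E_\nu(f^{(r)})_p^p$; more directly, $\nu^{-1}\leq n^{-1}\nu^{1-p}\cdot\nu^{p-1}\cdot\ldots$ — cleaner is to bound $\sum_{\nu>n}\nu^{-1}E_\nu(f^{(r)})_p^p\le E_n(f^{(r)})_p^p\sum_{\nu>n}\nu^{-1}$? no, $E_\nu$ is not monotone enough; instead simply note $\nu^{-1}<\nu^{-p}$ and then $\sum_{\nu>n}\nu^{-p}E_\nu(f^{(r)})_p^p=n^{p-1}\big(n^{1-p}\sum_{\nu>n}\nu^{-p}E_\nu(f^{(r)})_p^p\big)$, which raised to the $1/p$ gives $n^{1/p-1}\cdot n^{1-1/p}(\sum_{\nu>n}\nu^{-p}E_\nu(f^{(r)})_p^p)^{1/p}$ — one must be slightly careful that after taking the $1/p$-th root the factor $n^{1-1/p}$ sits inside as required by~\eqref{eqthsec3.2.2}. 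For the double sum, interchanging gives $\sum_{\mu>n}\mu^{-p}E_\mu(f^{(r)})_p^p\sum_{n<\nu<\mu}\nu^{p-2}\leq Cn^{p-1}\sum_{\mu>n}\mu^{-p}E_\mu(f^{(r)})_p^p$ since $\sum_{n<\nu<\mu}\nu^{p-2}\leq\sum_{\nu>n}\nu^{p-2}\leq Cn^{p-1}$. Collecting, $(\sum_{\nu>n}\nu^{rp-1}E_\nu(f)_p^p)^{1/p}\leq Cn^{1/p-1}(\sum_{\nu>n}\nu^{-p}E_\nu(f^{(r)})_p^p)^{1/p}$, which is again of the required form.

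Putting the two estimates together yields~\eqref{eqthsec3.2.2}. The main obstacle is purely bookkeeping: one must make sure the power of $n$ comes out exactly as $n^{1-1/p}$ in front of the tail sum (rather than, say, $n^{1/p-1}$), which requires tracking the $1/p$-th roots carefully and exploiting $p<1$ (so $p-2<-1$) to sum the weights $\sum_{\nu>n}\nu^{p-2}\asymp n^{p-1}$ and $\sum_{\nu<\mu}\nu^{p-2}=O(1)$. A minor point worth a sentence in the write-up: Theorem~\ref{thsec3.A} produces the derivative ``in the sense of $L_p$'' for \emph{some} best-approximation polynomials, and since $f^{(r-1)}\in AC(\T)$ the $L_p$-derivative of $f$ agrees with the classical $f^{(r)}$, so the conclusion is about the genuine $f^{(r)}$ appearing in the statement.
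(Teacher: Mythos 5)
Your overall route is exactly the paper's: use Theorem~\ref{thsec3.1} to control $E_\nu(f)_p$ by $E_\nu(f^{(r)})_p$, verify the hypothesis $\sum_\nu \nu^{rp-1}E_\nu(f)_p^p<\infty$ of Ivanov's Theorem~\ref{thsec3.A} by interchanging the order of summation, and then feed the two bounds into \eqref{eqthsec3.A.2} with the same best-approximation polynomials $T_n$. The finiteness check and the treatment of the double sum (using $\sum_{n<\nu<\mu}\nu^{p-2}\le Cn^{p-1}$) are fine, as is the estimate of the first term $n^rE_n(f)_p$ via \eqref{eqthsec3.1.2}.

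However, the step you yourself flag as ``bookkeeping'' is where the written argument actually fails. For the single-sum part of the tail you bound $\nu^{-1}\le\nu^{-p}$, which gives $\sum_{\nu>n}\nu^{-1}E_\nu(f^{(r)})_p^p\le\sum_{\nu>n}\nu^{-p}E_\nu(f^{(r)})_p^p$ and thereby discards the decaying factor $n^{p-1}$ that the theorem requires; the subsequent insertion of $n^{p-1}\cdot n^{1-p}=1$ is circular and does not recover it. Your ``collected'' estimate $\bigl(\sum_{\nu>n}\nu^{rp-1}E_\nu(f)_p^p\bigr)^{1/p}\le Cn^{1/p-1}\bigl(\sum_{\nu>n}\nu^{-p}E_\nu(f^{(r)})_p^p\bigr)^{1/p}$ has the exponent with the wrong sign: since $0<p<1$, the statement \eqref{eqthsec3.2.2} needs the decaying factor $n^{1-1/p}$, whereas what your estimates actually yield is only $\|f^{(r)}-T_n^{(r)}\|_p\le C\bigl(E_n(f^{(r)})_p+(\sum_{\nu>n}\nu^{-p}E_\nu(f^{(r)})_p^p)^{1/p}\bigr)$, a strictly weaker inequality. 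The fix is one line and is what the paper does in \eqref{eqthsec3.2.3}: for $\nu\ge n+1$ write $\nu^{-1}=\nu^{p-1}\nu^{-p}\le n^{p-1}\nu^{-p}$ (valid because $p-1<0$), so that both parts of the tail are bounded by $Cn^{p-1}\sum_{\nu>n}\nu^{-p}E_\nu(f^{(r)})_p^p$, and the $1/p$-th root then produces exactly $n^{1-1/p}$. (A side remark: your dismissal ``$E_\nu$ is not monotone enough'' is off --- $E_\nu(f^{(r)})_p$ is non-increasing in $\nu$; that contemplated bound fails simply because $\sum_{\nu>n}\nu^{-1}$ diverges.) With this correction your proof coincides with the paper's.
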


\begin{proof}
Using (\ref{eqthsec3.1.2}), we obtain
\begin{equation}\label{eqthsec3.2.3}
\begin{split}
&\sum\limits_{\nu=n+1}^\infty\nu^{rp-1}E_\nu(f)_p^p\\
&\leq C\sum\limits_{\nu=n+1}^\infty \(\nu^{-1}E_\nu(f^{(r)})_p^p+\nu^{p-2}\sum\limits_{\mu=\nu+1}^\infty \mu^{-p}E_\mu(f^{(r)})_p^p\)\\
&\leq C\sum\limits_{\nu=n+1}^\infty \nu^{p-1}\nu^{-p}E_\nu(f^{(r)})_p^p+C\(\sum\limits_{\nu=n+1}^\infty\nu^{p-2}\)\sum\limits_{\mu=n+1}^\infty \mu^{-p}E_\mu(f^{(r)})_p^p\\
&\leq Cn^{p-1}\sum\limits_{\nu=n+1}^\infty \nu^{-p}E_\nu(f^{(r)})_p^p\,.
\end{split}
\end{equation}
Therefore, by (\ref{eqthsec3.2.1}), we have that inequality (\ref{eqthsec3.A.2}) holds. Finally, combining (\ref{eqthsec3.A.2}), (\ref{eqthsec3.1.2}), and (\ref{eqthsec3.2.3}), we get (\ref{eqthsec3.2.2}).

The theorem is proved.

\end{proof}

Using Theorem~\ref{thsec3.1}, Theorem~\ref{thsec3.A}, and Theorem~\ref{thsec3.2}, we get the following equivalences.

\begin{corollary}\label{corsec3.1E}
{\it Let $0<p<1$, $r\in \N$, $\a>{1}/{p}-1$, and let $f$ be such that $f^{(r-1)}\in AC(\T)$. Then the following assertions are equivalent:

$(i)$ $E_n(f)_p=\mathcal{O}(n^{-r-\a})\,, \quad n\rightarrow
\infty\,,$

$(ii)$ $E_n(f^{(r)})_p=\mathcal{O}(n^{-\a})\,, \quad n\rightarrow
\infty\,,$

$(ii)$ $\Vert f^{(r)}-T_n^{(r)}\Vert_p=\mathcal{O}(n^{-\a})\,, \quad
n\rightarrow \infty\,,$

\noindent where $T_n\in\mathcal{T}_n$, $n\in\N$, are such that $\|f-T_n\|_p=E_n(f)$.}
\end{corollary}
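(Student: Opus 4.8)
The plan is to prove the cycle of implications $(ii)\Rightarrow(i)$, $(ii)\Rightarrow(iii)$, $(iii)\Rightarrow(ii)$, and $(i)\Rightarrow(iii)$, which together yield the asserted equivalence. Each of these arrows reduces to one of Theorems~\ref{thsec3.1}, \ref{thsec3.A}, and~\ref{thsec3.2} combined with an elementary estimate for the tail of a power series $\sum_\nu\nu^{-\s}$; the hypothesis $\a>1/p-1$ — equivalently $p(1+\a)>1$, and in particular $\a p>1-p>0$ — is used exactly to ensure that the series occurring in those theorems converge.

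The implication $(iii)\Rightarrow(ii)$ is immediate, since $T_n^{(r)}\in\mathcal{T}_n$ gives $E_n(f^{(r)})_p\le\|f^{(r)}-T_n^{(r)}\|_p$. For $(ii)\Rightarrow(i)$ and $(ii)\Rightarrow(iii)$, suppose $E_\nu(f^{(r)})_p\le C\nu^{-\a}$. Then $\sum_\nu\nu^{-p}E_\nu(f^{(r)})_p^p\le C\sum_\nu\nu^{-p(1+\a)}<\infty$, so the hypotheses \eqref{eqthsec3.1.1} and \eqref{eqthsec3.2.1} are satisfied and Theorems~\ref{thsec3.1} and~\ref{thsec3.2} apply. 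Since $p(1+\a)>1$, the tail $\sum_{\nu=n+1}^\infty\nu^{-p(1+\a)}$ is $\mathcal{O}(n^{1-p(1+\a)})$, whence $n^{1-1/p}\big(\sum_{\nu=n+1}^\infty\nu^{-p}E_\nu(f^{(r)})_p^p\big)^{1/p}=\mathcal{O}(n^{-\a})$; together with $E_n(f^{(r)})_p=\mathcal{O}(n^{-\a})$, this makes the right-hand side of \eqref{eqthsec3.1.2} equal to $\mathcal{O}(n^{-r-\a})$ and that of \eqref{eqthsec3.2.2} equal to $\mathcal{O}(n^{-\a})$, i.e. $(i)$ and $(iii)$ hold.

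For $(i)\Rightarrow(iii)$, suppose $E_\nu(f)_p\le C\nu^{-r-\a}$. Then $\sum_\nu\nu^{rp-1}E_\nu(f)_p^p\le C\sum_\nu\nu^{-1-\a p}<\infty$, so Theorem~\ref{thsec3.A} applies with $k=r$; one first checks that the $L_p(\T)$-sense derivative furnished by that theorem coincides with the ordinary derivative $f^{(r)}$, which exists and belongs to $L_1(\T)\subset L_p(\T)$ because $f^{(r-1)}\in AC(\T)$. In \eqref{eqthsec3.A.2} the first term is $n^rE_n(f)_p=\mathcal{O}(n^{-\a})$, and the second is $\big(\sum_{\nu=n+1}^\infty\nu^{-1-\a p}\big)^{1/p}=\mathcal{O}(n^{-\a})$ since $\a p>0$; hence $\|f^{(r)}-T_n^{(r)}\|_p=\mathcal{O}(n^{-\a})$, which is $(iii)$, and the cycle closes. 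The whole argument is little more than bookkeeping on top of the three quoted theorems, so I expect no genuine obstacle; the points deserving care are to confirm at each step that the series in the hypotheses of Theorems~\ref{thsec3.1}, \ref{thsec3.2}, and~\ref{thsec3.A} converge — which is precisely where $\a>1/p-1$ is used — and to identify the $L_p$-sense derivative of Theorem~\ref{thsec3.A} with the classical one.
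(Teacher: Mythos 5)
Your argument is correct and is exactly the route the paper intends: the paper derives this corollary simply by citing Theorems~\ref{thsec3.1}, \ref{thsec3.A}, and~\ref{thsec3.2}, and your proposal just fills in the routine tail estimates (using $\a>1/p-1$ for convergence) and the identification of the $L_p$-sense derivative with the classical one. No substantive difference from the paper's approach.
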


\subsection{\bf Inequalities for moduli of smoothness}
Now let us consider counterparts of Theorem~\ref{thsec3.1} and Theorem~\ref{thsec3.A} in the case of periodic moduli of smoothness.
%Combining Theorem~\ref{thsec3.1},
%Theorem~\ref{thsec3.2}, Lemma~\ref{lemNST}, and Lemma~\ref{lemJT},
%we get the following counterpart of inequality~\eqref{eq.Sec1.1} related to Theorem~\ref{thsec3.1}.

\begin{theorem}\label{th1T}
{\it Let  $0<p<1$, $k,r,m\in \N$, and let a function $f$ be such that $f^{(r-1)}\in AC(\T)$. Then for any $\d>0$ we have
\begin{equation}\label{eq.Sec2.1T}
    \w_{r+k}(f,\d)_p\le C\d^r \w_k(f^{(r)},\d)_p+C\d^{r+\frac1p-1}\(\int_0^\d
    \frac{\w_m(f^{(r)},t)_p^p}{t^{2-p}}{\rm d}t\)^\frac1p,
\end{equation}
where $C$ is a constant independent of $f$ and $\d$.
%Inequality
%(\ref{eq.Sec2.1T}) means that if the right-hand is finite for some
%$m$, then (\ref{eq.Sec2.1T}) holds.
}
\end{theorem}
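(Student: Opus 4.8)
The plan is to imitate the structure of the proof of Theorem~\ref{thsec3.1}, replacing the use of the Jackson theorem and best-approximation polynomials with a direct dyadic decomposition of $f^{(r)}$ built from its moduli of smoothness. First I would fix $\d>0$ and choose $m_0\in\N$ with $2^{-m_0}\le\d<2^{-m_0+1}$. For each $\mu\ge m_0$ I would pick (via the Jackson-type Lemma~\ref{lemJT}) a trigonometric polynomial $U_\mu\in\mathcal{T}_{2^\mu}$ with $\|f^{(r)}-U_\mu\|_p\le C\,\w_k(f^{(r)},2^{-\mu})_p$ (using $\w_m$ in the tail when convenient, since both enter the right-hand side of~\eqref{eq.Sec2.1T}), and then let $T_\mu\in\mathcal{T}_{2^\mu}$ be a primitive of order $r$ of $U_\mu-\widehat{U_\mu}(0)$, exactly as in the theorem's proof. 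The telescoping bound $\|f-T_{m_0}\|_p^p\le\sum_{\mu\ge m_0}\|T_{\mu+1}-T_\mu\|_p^p$ requires the convergence statement $U_{2^\mu}\to f^{(r)}$ in $L_1$, which follows from the Nikolskii inequality of different metrics (Lemma~\ref{lemNikT}) together with the convergence of $\sum 2^{(1-p)\mu}\w_m(f^{(r)},2^{-\mu})_p^p$; this last series converges because it is comparable to $\int_0^{\cdot}\w_m(f^{(r)},t)_p^p\,t^{p-2}\,{\rm d}t$, which is finite since $\w_m$ is bounded and $p-2<-1$. So the hypothesis $f^{(r-1)}\in AC(\T)$ suffices and no extra summability assumption is needed here (unlike in Theorem~\ref{thsec3.1}, because $\d$ is fixed).

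Next I would split $\w_{r+k}(f,\d)_p^{p}\le\w_{r+k}(T_{m_0},\d)_p^{p}+\w_{r+k}(f-T_{m_0},\d)_p^{p}$ using~\eqref{eqM0}. For the first term, write $T_{m_0}=T_{m_0}-T_{m_0-1}+\dots$ is not needed; instead, since $\w_{r+k}(T_{m_0},\d)_p\le C\,\d^r\|T_{m_0}^{(r)}\|_p\cdot$(something), I would be more careful: I'd estimate $\w_{r+k}(T_{m_0},\d)_p$ directly. Actually the cleanest route, paralleling~\eqref{eqthsec3.1.4}--\eqref{eqthsec3.1.5}, is to realize $\w_{r+k}(g,\d)_p=\sup_{0<h\le\d}\|\D_h^k(\D_h^r g)\|_p$ and, for a polynomial $g=T_N\in\mathcal{T}_N$ with $N\sim 2^{m_0}\sim\d^{-1}$, to bound $\w_{r+k}(T_N,\d)_p\le\sup_u\w_k(\D_u^r(\text{diff}),\d)_p$ and then apply Lemma~\ref{lemNST} to pull out $\d^r\|(\cdot)^{(r)}\|_p$; with $T_{m_0}^{(r)}=U_{m_0}-\widehat{U_{m_0}}(0)$ this yields $\w_{r+k}(T_{m_0},\d)_p\le C\d^r\w_k(f^{(r)},\d)_p$ after adding and subtracting $f^{(r)}$, which is the first term on the right of~\eqref{eq.Sec2.1T}. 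For the second term I would use the telescoping inequality above and then bound each $\w_{r+k}(T_{\mu+1}-T_\mu,\d)_p$ exactly as in~\eqref{eqthsec3.1.10}: apply~\eqref{eqM1}, \eqref{eqM2} to replace $\d$ by $2^{-\mu-1}$ at the cost of a factor $(2^{-\mu-1}/\d)^{-(r+1/p-1)}\le C(2^\mu\d)^{r+1/p-1}$, then Lemma~\ref{lemNST} to get $(2^{-\mu})^r\|U_{\mu+1}-U_\mu\|_p$ up to the $\D_u^r$ device, and finally $\|U_{\mu+1}-U_\mu\|_p\le C(\w_m(f^{(r)},2^{-\mu})_p$ via the triangle inequality in $L_p$ (quasi-norm, so with a constant) and the Jackson estimates.

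Summing the $p$-th powers gives
\begin{equation*}
\w_{r+k}(f-T_{m_0},\d)_p^p\le C\,\d^{(r+\frac1p-1)p}\sum_{\mu\ge m_0}2^{-\mu(r)p}\cdot 2^{\mu(r+\frac1p-1)p}\,\cdot 2^{-(r)p\cdot 0}\w_m(f^{(r)},2^{-\mu})_p^p,
\end{equation*}
where the powers of $2^\mu$ combine to $2^{\mu(\frac1p-1)p}=2^{\mu(1-p)}$, so the sum is $\sum_{\mu\ge m_0}2^{\mu(1-p)}\w_m(f^{(r)},2^{-\mu})_p^p$, which by monotonicity of $\w_m$ is comparable to $\int_0^\d\w_m(f^{(r)},t)_p^p\,t^{-2+p}\,{\rm d}t$. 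This produces precisely the second term in~\eqref{eq.Sec2.1T}, and combining with the bound for $\w_{r+k}(T_{m_0},\d)_p$ finishes the proof. The main obstacle I anticipate is the bookkeeping of exponents in the dyadic sum together with the repeated but slightly delicate use of the Stechkin--Nikolskii Lemma~\ref{lemNST} via the auxiliary first differences $\tau_u=\D_u^1(\cdot)$ (needed so that the polynomial degree is controlled and the factor $h^r\asymp\|\D_h^r\cdot\|$ can be inverted); one must make sure that the substitution $\d\leftrightarrow 2^{-\mu}$ in the modulus is always done in the direction allowed by~\eqref{eqM2} and that the condition $0<h\le\pi/N$ of Lemma~\ref{lemNST} is met for the relevant degree $N$, which forces the inequality $n<2^m\le 2^{\mu+1}$ (here the analogue with $\d^{-1}\sim 2^{m_0}\le 2^{\mu+1}$) to be tracked carefully exactly as in the third inequality of~\eqref{eqthsec3.1.10}.
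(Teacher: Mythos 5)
Your overall strategy is sound and is essentially the paper's own machinery unfolded rather than a different route: the paper deduces \eqref{eq.Sec2.1T} in a few lines by applying Theorems~\ref{thsec3.1} and~\ref{thsec3.2} (whose proofs already contain the dyadic telescoping with the $r$-th primitives $T_{2^\mu}$ of near-best approximants $U_{2^\mu}$ of $f^{(r)}$, together with the $\D_u$-difference device and Lemmas~\ref{lemJT}--\ref{lemNikT}) to the polynomial of best approximation of $f$, whereas you re-run that construction directly at the fixed scale $\d$. Your exponent bookkeeping, the factor $(2^\mu\d)^{r+\frac1p-1}$ from \eqref{eqM2} applied after stripping the extra differences, times $2^{-\mu r}$ from Lemma~\ref{lemNST}, giving $\d^{(r+\frac1p-1)p}\sum_{\mu\ge m_0}2^{\mu(1-p)}\w_m(f^{(r)},2^{-\mu})_p^p\asymp \d^{(r+\frac1p-1)p}\int_0^\d\w_m(f^{(r)},t)_p^p\,t^{p-2}\,{\rm d}t$, agrees with \eqref{eqthsec3.1.10}--\eqref{eqthsec3.1.11}, and your treatment of the term $\w_{r+k}(T_{m_0},\d)_p\le C\d^r\w_k(f^{(r)},\d)_p$ is correct.

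There is, however, one genuine error: your justification of the convergence of $\sum_{\mu\ge m_0}2^{(1-p)\mu}\w_m(f^{(r)},2^{-\mu})_p^p$ (which you need to get $U_{2^\mu}\to f^{(r)}$ in $L_1$ via Lemma~\ref{lemNikT} and hence to control the telescoping remainder) is backwards. Since $p-2<-1$, the integral $\int_0^\d t^{p-2}\,{\rm d}t$ \emph{diverges}, so boundedness of $\w_m(f^{(r)},\cdot)_p$ gives nothing; for $f$ with $f^{(r-1)}\in AC(\T)$ but slowly decaying $\w_m(f^{(r)},t)_p$ the series does diverge, and your assertion that ``no extra summability assumption is needed'' is false as stated. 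The correct remedy is exactly the reduction the paper makes in \eqref{suppo}: if $\int_0^\d\w_m(f^{(r)},t)_p^p\,t^{p-2}\,{\rm d}t=\infty$, the right-hand side of \eqref{eq.Sec2.1T} is infinite and there is nothing to prove, so one may assume finiteness without loss of generality, and this finiteness is precisely the summability your telescoping requires. With that one-line fix, plus the routine points you gloss over (choose the $U_\mu$ as best approximants so that both the $\w_k$-bound at scale $\d$ and the $\w_m$-bounds in the tail are available from Lemma~\ref{lemJT}; handle the undetermined constant in $T_\mu$ and the vanishing of the remainder through the $L_1$ argument as in \eqref{ZZZ}--\eqref{zvezda}, using $\int_\T f^{(r)}=0$), your argument goes through.
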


\begin{proof}
It is clear that we can suppose that
\begin{equation}\label{suppo}
 \int_0^1
    \frac{\w_m(f^{(r)},t)_p^p}{t^{2-p}}{\rm d}t<\infty.
\end{equation}

Let $n\in \N$ be such that $1/(n+1)<\d\le 1/n$ and let $T_n\in\mathcal{T}_n$ be polynomials of the best approximation of $f$ in $L_p(\T)$.
By~(\ref{eqM0}), we get
\begin{equation}\label{eqth1T.1}
\begin{split}
  \w_{r+k}(f,\d)_p^p&\le \w_{r+k}(f,1/n)_p^p\\
  &\le \w_{r+k}(f-T_n,1/n)_p^p+\w_{r+k}(T_n,1/n)_p^p=M_1+M_2.
\end{split}
\end{equation}
Using Lemma~\ref{lemNST}, (\ref{eqM0}), and (\ref{eqM1}), we obtain
\begin{equation}\label{eqDif5T}
\begin{split}
    M_2&\le  Cn^{-rp}\w_k (T_n^{(r)},1/n)_p^p\\
&\le Cn^{-rp}\(\Vert f^{(r)}-T_n^{(r)}\Vert_p^p+\w_k(f^{(r)},1/n)_p^p\).
\end{split}
\end{equation}
Next, by~\eqref{suppo}, Theorem~\ref{thsec3.2}, and Lemma~\ref{lemJT}, we have
\begin{equation}\label{eqDif6T}
\begin{split}
\Vert f^{(r)}-T_n^{(r)}\Vert_p^p&\le C \(\w_m(f^{(r)},1/n)_p^p+n^{p-1}\sum\limits_{\nu=n+1}^\infty \nu^{-p}\w_m(f^{(r)},1/\nu)_p^p\)\\
&\le Cn^{p-1}\int_0^{1/n} \frac{\w_m(f^{(r)},t)_p^p}{t^{2-p}}{\rm d}t.
\end{split}
\end{equation}
At the same time, by~(\ref{eqM1}),~(\ref{eqM2}),  Theorem~\ref{thsec3.1}, and Lemma~\ref{lemJT}, we derive
\begin{equation}\label{eqDif7T}
\begin{split}
M_1&\le C\Vert f-T_n\Vert_p^p\\
&\le Cn^{-rp}\(\w_m(f^{(r)},1/n)_p^p+n^{p-1}\sum\limits_{\nu=n+1}^\infty\nu^{-p}\w_m(f^{(r)},1/\nu)_p^p\) \\
&\le Cn^{p-1-rp}\int_0^{1/n} \frac{\w_m(f^{(r)},t)_p^p}{t^{2-p}}{\rm d}t.
\end{split}
\end{equation}
Thus, combining (\ref{eqth1T.1})--(\ref{eqDif7T}) and taking into
account (\ref{eqM2}) and $1/(n+1)<\d\le 1/n$, we get
(\ref{eq.Sec2.1T}).

The theorem is proved.
\end{proof}

\begin{corollary}\label{cor.Sec2.1T}
Under the conditions of Theorem~\ref{th1T}, for any $\d>0$ we have
\begin{equation*}\label{eq.cor.Sec2.1.1T}
    \w_{r+k}(f,\d)_p\leq C\d^{r+\frac{1}{p}-1}\(\int_0^\d\frac{\w_{k}(f^{(r)},t)_p^p}{t^{2-p}}{\rm d}t\)^\frac{1}{p}\,,
\end{equation*}
where $C$ is a constant independent of $f$ and $\d$.
\end{corollary}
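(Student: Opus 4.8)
The plan is to deduce the corollary from Theorem~\ref{th1T} by choosing the free parameter $m$ appropriately and then absorbing the first term on the right-hand side of \eqref{eq.Sec2.1T} into the integral term. The natural choice is $m=k$, so that both occurrences of the modulus of smoothness of $f^{(r)}$ in \eqref{eq.Sec2.1T} are $\w_k(f^{(r)},\cdot)_p$. It then remains only to show that the first summand $C\d^r\w_k(f^{(r)},\d)_p$ is dominated by the second summand $C\d^{r+\frac1p-1}\(\int_0^\d \w_k(f^{(r)},t)_p^p\,t^{p-2}\,{\rm d}t\)^{1/p}$, after which Theorem~\ref{th1T} immediately yields the claimed inequality (with a possibly larger constant $C$).

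The key step is the elementary observation that the integrand controls $\w_k(f^{(r)},\d)_p$ from below in an averaged sense. First I would use the monotonicity property \eqref{eqM2} of the modulus of smoothness: for $0<t\le\d$ one has $\w_k(f^{(r)},\d)_p\le C(k)(\d/t)^{k+\frac1p-1}\w_k(f^{(r)},t)_p$, and in particular, since $k\ge 1$ gives $k+\frac1p-1\ge \frac1p>0$, the function $t\mapsto \w_k(f^{(r)},t)_p$ is, up to the constant $C(k)$, bounded below on $[\d/2,\d]$ by a fixed multiple of $\w_k(f^{(r)},\d)_p$ — more precisely $\w_k(f^{(r)},t)_p\ge c(k)\,\w_k(f^{(r)},\d)_p$ for $t\in[\d/2,\d]$. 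Hence
\begin{equation*}
  \int_0^\d \frac{\w_k(f^{(r)},t)_p^p}{t^{2-p}}\,{\rm d}t
  \ge \int_{\d/2}^\d \frac{\w_k(f^{(r)},t)_p^p}{t^{2-p}}\,{\rm d}t
  \ge c(k)^p\,\w_k(f^{(r)},\d)_p^p\int_{\d/2}^\d t^{p-2}\,{\rm d}t
  \ge c_1(k,p)\,\w_k(f^{(r)},\d)_p^p\,\d^{p-1},
\end{equation*}
where in the last step I evaluate $\int_{\d/2}^\d t^{p-2}\,{\rm d}t = \frac{1}{1-p}\bigl((\d/2)^{p-1}-\d^{p-1}\bigr)=c(p)\,\d^{p-1}$ with $c(p)=\frac{2^{1-p}-1}{1-p}>0$. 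Raising to the power $1/p$ and multiplying by $\d^r$ gives $\d^r\w_k(f^{(r)},\d)_p\le C\,\d^{r+\frac1p-1}\(\int_0^\d \w_k(f^{(r)},t)_p^p\,t^{p-2}\,{\rm d}t\)^{1/p}$, which is exactly the domination we need.

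Combining this with Theorem~\ref{th1T} taken with $m=k$ finishes the argument. I do not anticipate a genuine obstacle here: the only point requiring a little care is checking that the integral $\int_0^\d \w_k(f^{(r)},t)_p^p\,t^{p-2}\,{\rm d}t$ is finite (otherwise the inequality is vacuous), but this is precisely the hypothesis \eqref{suppo} used in the proof of Theorem~\ref{th1T}, and on the set where the right-hand side is infinite there is nothing to prove. One should also note that the estimate is genuinely of interest only when this integral converges, i.e. when $f^{(r)}$ has a suitable amount of smoothness beyond $L_p$; under the stated hypothesis $f^{(r-1)}\in AC(\T)$ alone the corollary is simply the assertion that Theorem~\ref{th1T} with $m=k$ can be written in the cleaner one-term form above.
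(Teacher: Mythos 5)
Your proof is correct and is essentially the argument the paper intends (the corollary is stated without proof, as an immediate consequence of Theorem~\ref{th1T}): take $m=k$ and absorb the term $\d^r\w_k(f^{(r)},\d)_p$ into the integral by noting that $\w_k(f^{(r)},t)_p\gtrsim\w_k(f^{(r)},\d)_p$ on $[\d/2,\d]$ via \eqref{eqM2}, so that $\int_0^\d\w_k(f^{(r)},t)_p^p\,t^{p-2}\,{\rm d}t\gtrsim\d^{p-1}\w_k(f^{(r)},\d)_p^p$. The computation of the constants and the remark about the trivial case of a divergent integral are both in order.
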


In a similar way, combining Theorem~\ref{thsec3.A},
Lemma~\ref{lemNST}, and Lemma~\ref{lemJT} (see also the proof of
Theorem~\ref{thModFrD} below), we obtain the following inverse
inequality for the moduli of smoothness of periodic functions and
their derivatives. This result was proved earlier by Ditzian and
Tikhonov in~\cite{DiTi07}.

\begin{theorem}\label{thsec3.B}
{\it Let $f\in L_p(\T)$, $0<p<1$, $r\,,k\in \N$, and $k<r$. Then for any $\d>0$ we have
\begin{equation}\label{eqthsec3.B.1}
    \w_{r-k}(f^{(k)},\d)_p\le C\(\int_0^\d
    \frac{\w_r(f,t)_p^p}{t^{pk+1}}{\rm d}t\)^\frac1p,
\end{equation}
where $C$ is some constant independent of $f$ and $\d$. Inequality
(\ref{eqthsec3.B.1}) means that if the right-hand side is finite, then
there exists
 $f^{(k)}$ in the sense
(\ref{senseDerT}), $f^{(k)}\in L_p(\T)$, and
(\ref{eqthsec3.B.1}) holds.}
\end{theorem}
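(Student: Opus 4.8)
The plan is to derive Theorem~\ref{thsec3.B} from the approximation-side estimate in Theorem~\ref{thsec3.A} together with the Jackson and Stechkin--Nikolskii type lemmas, following the same pattern used to pass between best-approximation inequalities and moduli-of-smoothness inequalities elsewhere in this section. First I would assume the right-hand side is finite, so in particular $\int_0^1 \w_r(f,t)_p^p\,t^{-pk-1}\,{\rm d}t<\infty$; discretizing this integral over dyadic scales and invoking Lemma~\ref{lemJT} in the form $E_\nu(f)_p\le C\,\w_r(f,1/\nu)_p$ gives $\sum_{\nu=1}^\infty \nu^{kp-1}E_\nu(f)_p^p<\infty$. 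Hence Theorem~\ref{thsec3.A} applies: $f$ has a derivative $f^{(k)}$ in the sense of \eqref{senseDerT}, $f^{(k)}\in L_p(\T)$, and the polynomials $T_n$ of best approximation satisfy \eqref{eqthsec3.A.2}.

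Next I would fix $\d>0$, choose $n\in\N$ with $1/(n+1)<\d\le1/n$, and split via subadditivity \eqref{eqM0}:
\begin{equation*}
  \w_{r-k}(f^{(k)},\d)_p^p\le \w_{r-k}(f^{(k)}-T_n^{(k)},1/n)_p^p+\w_{r-k}(T_n^{(k)},1/n)_p^p.
\end{equation*}
The first term is bounded using \eqref{eqM1} by $C\|f^{(k)}-T_n^{(k)}\|_p^p$, to which \eqref{eqthsec3.A.2} is applied; the resulting two pieces $n^{kp}E_n(f)_p^p$ and $\sum_{\nu>n}\nu^{kp-1}E_\nu(f)_p^p$ are each dominated by $C\int_0^{1/n}\w_r(f,t)_p^p\,t^{-pk-1}\,{\rm d}t$, again through Lemma~\ref{lemJT} and dyadic comparison (for the first piece one uses $n^{kp}E_n(f)_p^p\le Cn^{kp}\w_r(f,1/n)_p^p\le C\int_{1/(2n)}^{1/n}\w_r(f,t)_p^p t^{-pk-1}{\rm d}t$ after \eqref{eqM2}). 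For the second, polynomial, term I would use Lemma~\ref{lemNST}: since $T_n\in\mathcal{T}_n$ and $1/n\le\pi/n$ (adjusting $n$ by an absolute constant if needed, or applying the lemma with $h$ comparable to $1/n$),
\begin{equation*}
  \w_{r-k}(T_n^{(k)},1/n)_p^p\le C\,n^{-(r-k)p}\|T_n^{(r)}\|_p^p\le C\,n^{kp}\|\D_{1/n}^r T_n\|_p^p\le C\,n^{kp}\w_r(T_n,1/n)_p^p,
\end{equation*}
and then $\w_r(T_n,1/n)_p^p\le \w_r(f,1/n)_p^p+\w_r(f-T_n,1/n)_p^p\le C\big(\w_r(f,1/n)_p^p+\|f-T_n\|_p^p\big)\le C\,\w_r(f,1/n)_p^p$ by Lemma~\ref{lemJT}. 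Combining and using \eqref{eqM2} with $1/(n+1)<\d\le1/n$ to replace $1/n$ by $\d$ yields \eqref{eqthsec3.B.1}.

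The main obstacle I expect is bookkeeping the passage between the pointwise quantities $E_n(f)_p$ or $\w_r(f,1/n)_p$ and the integral $\int_0^\d\w_r(f,t)_p^p t^{-pk-1}{\rm d}t$ cleanly, i.e.\ making sure the monotonicity/quasi-monotonicity afforded by \eqref{eqM2} is used in the right direction (so that $\w_r(f,t)_p$ on a dyadic block $[2^{-j-1},2^{-j}]$ is comparable to $\w_r(f,2^{-j})_p$ up to constants depending only on $r$ and $p$) and that the tail sum $\sum_{\nu>n}\nu^{kp-1}E_\nu(f)_p^p$ matches the integral over $(0,1/n)$ rather than producing an extra factor. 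A secondary point is that Theorem~\ref{thsec3.A} only guarantees the $L_p$-sense derivative a posteriori, so the identification $(T_n)^{(k)}$ with the genuine $k$-th derivative polynomial, and the validity of $\|f^{(k)}-T_n^{(k)}\|_p<\infty$, must be invoked exactly as stated there; once that is in hand the rest is the routine Jackson/Stechkin argument already rehearsed in the proofs of Theorem~\ref{thsec3.1} and Theorem~\ref{th1T}.
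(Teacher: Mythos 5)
Your argument is correct and follows essentially the same route the paper indicates: the paper proves Theorem~\ref{thsec3.B} precisely by combining Theorem~\ref{thsec3.A} with Lemma~\ref{lemNST} and Lemma~\ref{lemJT} (along the lines of the proof of Theorem~\ref{thModFrD}), which is exactly your decomposition into $f^{(k)}-T_n^{(k)}$ and $T_n^{(k)}$ plus the dyadic comparison of sums with the integral. Your write-up just fills in the details the paper leaves as a sketch, and the bookkeeping steps you flag (monotonicity via \eqref{eqM2}, matching the tail sum with $\int_0^{1/n}$) go through as you describe.
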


Using Theorem~\ref{th1T} and  Theorem~\ref{thsec3.B}, we get the following equivalence.
\begin{corollary}\label{corsec3.1Mod}
{\it Under the conditions of Corollary~\ref{corsec3.1E}, the
following assertions are equivalent for any $k\in \N$:

$(i)$ $\w_{r+k}(f,\d)_p=\mathcal{O}(\d^{r+\a})\,, \quad
\d\rightarrow 0\,,$

$(ii)$ $\w_{k}(f^{(r)},\d)_p=\mathcal{O}(\d^{\a})\,, \quad
\d\rightarrow 0\,.$ }
\end{corollary}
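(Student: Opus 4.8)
The plan is to deduce each of the two implications by substituting the assumed power‑type decay into one of the two integral inequalities already available: Corollary~\ref{cor.Sec2.1T} (a consequence of Theorem~\ref{th1T}) gives $(ii)\Rightarrow(i)$, while the Ditzian--Tikhonov inverse inequality of Theorem~\ref{thsec3.B} gives $(i)\Rightarrow(ii)$. Throughout one uses that $f^{(r-1)}\in AC(\T)$ forces $f^{(r)}\in L_1(\T)\subseteq L_p(\T)$ (since $\T$ has finite measure and $0<p<1$), so that $\w_k(f^{(r)},\cdot)_p$ is well defined.

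\emph{Proof of $(ii)\Rightarrow(i)$.} Assume $\w_k(f^{(r)},\d)_p\le M\d^{\a}$ for all small $\d>0$. By Corollary~\ref{cor.Sec2.1T} (applied with the exponent $k$ in the integrand),
$$
\w_{r+k}(f,\d)_p\le C\d^{\,r+\frac1p-1}\(\int_0^\d\frac{\w_k(f^{(r)},t)_p^p}{t^{2-p}}\,{\rm d}t\)^{\frac1p}.
$$
Inserting the bound $\w_k(f^{(r)},t)_p\le M t^{\a}$ gives
$$
\int_0^\d\frac{\w_k(f^{(r)},t)_p^p}{t^{2-p}}\,{\rm d}t\le M^p\int_0^\d t^{(\a+1)p-2}\,{\rm d}t,
$$
and the last integral converges precisely because $(\a+1)p-2>-1$, i.e. $\a>1/p-1$, which is our hypothesis; its value is $\frac{M^p}{(\a+1)p-1}\,\d^{(\a+1)p-1}$. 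Hence $\w_{r+k}(f,\d)_p\le C'\d^{\,r+\frac1p-1}\d^{\,\a+1-\frac1p}=C'\d^{\,r+\a}$, which is $(i)$. (If the integral in Corollary~\ref{cor.Sec2.1T} were infinite the asserted bound would be vacuous, so no extra hypothesis is needed; here it is finite by the computation above.)

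\emph{Proof of $(i)\Rightarrow(ii)$.} Assume $\w_{r+k}(f,\d)_p\le M\d^{\,r+\a}$. Apply Theorem~\ref{thsec3.B} with its parameters chosen so that the order of the modulus on the left is $k$ and the order of differentiation is $r$; this means taking the theorem's ``$r$'' to be $r+k$ and its ``$k$'' to be $r$, and the required constraint ``$k<r$'' becomes $r<r+k$, which holds since $k\ge1$. Thus
$$
\w_k(f^{(r)},\d)_p\le C\(\int_0^\d\frac{\w_{r+k}(f,t)_p^p}{t^{pr+1}}\,{\rm d}t\)^{\frac1p}.
$$
Inserting $\w_{r+k}(f,t)_p\le M t^{\,r+\a}$, the integrand is at most $M^p t^{p\a-1}$, and $\int_0^\d t^{p\a-1}\,{\rm d}t=\frac{1}{p\a}\d^{p\a}$ converges since $\a>1/p-1>0$. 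Therefore $\w_k(f^{(r)},\d)_p\le C M(p\a)^{-1/p}\d^{\a}=\mathcal{O}(\d^{\a})$, which is $(ii)$. (Theorem~\ref{thsec3.B} also yields the existence of the derivative in the sense of \eqref{senseDerT}; since $f^{(r)}$ already exists classically and lies in $L_p(\T)$, the two coincide, so the statement is consistent.)

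The only point requiring any care is the convergence of the integrals at $t=0$: in the first implication this is exactly the condition $\a>1/p-1$, and in the second it reduces to $\a>0$, which is implied. With those checks done, both implications, and hence the stated equivalence, follow.
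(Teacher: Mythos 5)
Your proposal is correct and follows essentially the same route as the paper, which likewise obtains $(ii)\Rightarrow(i)$ from Theorem~\ref{th1T} (via Corollary~\ref{cor.Sec2.1T}) and $(i)\Rightarrow(ii)$ from Theorem~\ref{thsec3.B}, the only difference being that you write out the elementary integral estimates and the role of the condition $\a>1/p-1$ explicitly. The consistency remark about the $L_p$-sense derivative coinciding with the classical one under $f^{(r-1)}\in AC(\T)$ is a correct and welcome detail that the paper leaves implicit.
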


Let us consider some applications of the above theorems.

In the case $1\le p<\infty$, it is well-known the following Second Jackson theorem:
\emph{if $f\in W_p^r(\T)$, then for any $n\in \N$ one has
\begin{equation}\label{eqJsec}
  E_n(f)_p\le C_rn^{-r}\w_1\(f^{(r)},n^{-1}\)_p
\end{equation}
}(see, e.g.~\cite[p.~205]{DeLo}).
As it was mentioned above (see~\eqref{eqKop}), inequality~\eqref{eqJsec} is not true if $0<p<1$.
%In particular, Ivanov~\cite{Iv} proved that
%$$
%\sup_{f\in AC(\T)}\frac{E_0(f)_p}{\Vert f'\Vert_p}=\infty.
%$$
Combining Lemma~\ref{lemJT} and Corollary~\ref{cor.Sec2.1T}, we obtain the following counterpart of Jackson's inequality~\eqref{eqJsec} in the case $0<p<1$.

\begin{proposition}\label{propJacksec}
Let  $0<p<1$, $k,r\in \N$, and let a function $f$ be such that $f^{(r-1)}\in AC(\T)$. Then for any $n\in \N$ we have
$$
E_n(f)_p\leq C{n^{-r-\frac1p+1}}\(\int_0^{1/n}
    \frac{\w_k(f^{(r)},t)_p^p}{t^{2-p}}dt\)^\frac1p,
$$
where $C$ is a constant independent of $f$ and $n$.
\end{proposition}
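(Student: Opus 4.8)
The plan is to derive this Jackson-type estimate directly by combining the Jackson theorem (Lemma~\ref{lemJT}) with the new direct inequality for moduli of smoothness obtained in Corollary~\ref{cor.Sec2.1T}. First I would apply Lemma~\ref{lemJT} with order $r+k$ to get $E_n(f)_p\le C\,\w_{r+k}(f,1/n)_p$, which is legitimate since $f\in L_p(\T)$. The point is that the order of the modulus can be taken to be $r+k$, matching exactly the left-hand side of Corollary~\ref{cor.Sec2.1T}.

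Next I would invoke Corollary~\ref{cor.Sec2.1T} with $\d=1/n$, which gives
\begin{equation*}
    \w_{r+k}(f,1/n)_p\le C\,n^{-r-\frac1p+1}\(\int_0^{1/n}\frac{\w_k(f^{(r)},t)_p^p}{t^{2-p}}\,{\rm d}t\)^{\frac1p}.
\end{equation*}
Chaining these two inequalities immediately yields the claimed bound with a constant depending only on $p,k,r$. Strictly speaking one should note that Corollary~\ref{cor.Sec2.1T} (via Theorem~\ref{th1T}) is stated under the blanket hypothesis $f^{(r-1)}\in AC(\T)$, which is exactly the hypothesis of the Proposition, so there is nothing extra to verify; if the integral on the right-hand side diverges the inequality is trivially true, and if it converges then the machinery of Theorem~\ref{th1T} applies.

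There is essentially no obstacle here: this is a corollary-style consequence, and the only mild subtlety is making sure the order $r+k$ in the modulus coming out of Jackson's theorem lines up with the order allowed in Corollary~\ref{cor.Sec2.1T} — which it does, since Theorem~\ref{th1T} produces $\w_{r+k}(f,\d)_p$ on its left-hand side for arbitrary $k\in\N$. I would therefore present the proof in two short lines: the Jackson step and the substitution of Corollary~\ref{cor.Sec2.1T}, remarking that the passage from $1/(n+1)<\d\le 1/n$ to $\d=1/n$ is harmless by the monotonicity-type property~\eqref{eqM2} of the modulus (already used in the proof of Theorem~\ref{th1T}).
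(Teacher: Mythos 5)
Your proof is correct and follows exactly the route the paper intends: the paper obtains this proposition precisely by combining the Jackson-type estimate of Lemma~\ref{lemJT} (with modulus of order $r+k$) with Corollary~\ref{cor.Sec2.1T} at $\d=1/n$. Your additional remarks about the divergent-integral case and the harmlessness of replacing $\d$ by $1/n$ are fine but not needed beyond what the paper's one-line derivation already assumes.
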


At the end of this subsection, let us mention one simple application of Corollary~\ref{corsec3.1Mod}.
Recall that Krotov~\cite{Kr83} obtained the following description of functions $f\in L_p(\T)$, $0<p<1$, with the optimal rate of decreasing of $\w_1(f,h)_p$: for $f\in L_p(\T)$ we have that  $\w_1(f,h)_p=\mathcal{O}(h^{1/p})$ iff after correction on a
set of measure zero $f$ be of the form
$$
f(x)=d_0+\sum_{x_k<x}d_k,
$$
where $\{x_k\}$ is a sequence of different points from $\T$ and $\sum_{k=1}^\infty |d_k|^p<\infty$.

Using Corollary~\ref{corsec3.1Mod} and mentioned above Krotov's result, we obtain the following proposition.

\begin{proposition}\label{propDesc}
  Let $f\in L_p(\T)$, $0<p<1$, and $r\in \N$. Then $\w_r(f,h)_p=\mathcal{O}(h^{r-1+1/p})$  iff after correction on a
set of measure zero $f^{(r-1)}$ be of the form
$$
f^{(r-1)}(x)=d_0+\sum_{x_k<x}d_k,
$$
where $\{x_k\}$ is a sequence of different points from $\T$ and $\sum_{k=1}^\infty |d_k|^p<\infty$.
\end{proposition}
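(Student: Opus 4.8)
The plan is to combine the equivalence of Corollary~\ref{corsec3.1Mod} with Krotov's characterization (the case $r=1$ stated just above). Observe that Proposition~\ref{propDesc} asserts that $\w_r(f,h)_p = \mathcal{O}(h^{r-1+1/p})$ if and only if $f^{(r-1)}$ exists (in the appropriate sense), is of step-function type, and the condition $\w_1(f^{(r-1)},h)_p = \mathcal{O}(h^{1/p})$ holds by Krotov. So the key is to transfer between $\w_r(f,h)_p$ and $\w_1(f^{(r-1)},h)_p$.

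First I would handle the direction ``$\Leftarrow$''. Suppose $f^{(r-1)}$ (after correction) has the stated step form with $\sum_k|d_k|^p<\infty$; in particular $f^{(r-2)}\in AC(\T)$ so the hypotheses of Corollary~\ref{corsec3.1Mod} are met with the roles $r\mapsto r-1$, $k\mapsto 1$, $\a\mapsto 1/p-1$ (note $\a=1/p-1>1/p-1$ fails to be strict, so I must instead apply Corollary~\ref{cor.Sec2.1T} directly rather than the equivalence—see the obstacle paragraph). By Krotov's theorem, $\w_1(f^{(r-1)},h)_p=\mathcal{O}(h^{1/p})$. Plugging $\w_k(f^{(r-1)},t)_p=\w_1(f^{(r-1)},t)_p=\mathcal{O}(t^{1/p})$ with $r\mapsto r-1$, $k=1$ into Corollary~\ref{cor.Sec2.1T} gives
\begin{equation*}
\w_r(f,\d)_p\le C\d^{(r-1)+\frac1p-1}\(\int_0^\d\frac{\w_1(f^{(r-1)},t)_p^p}{t^{2-p}}{\rm d}t\)^{\frac1p}\le C\d^{r-2+\frac1p}\(\int_0^\d\frac{t}{t^{2-p}}{\rm d}t\)^{\frac1p}=C\d^{r-1+\frac1p},
\end{equation*}
which is the desired estimate. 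For the direction ``$\Rightarrow$'', suppose $\w_r(f,h)_p=\mathcal{O}(h^{r-1+1/p})$. Then $\int_0^\d \w_r(f,t)_p^p\,t^{-p(r-1)-1}{\rm d}t\le C\int_0^\d t^{(r-1)p+1-(r-1)p-1}{\rm d}t=C\d<\infty$, so Theorem~\ref{thsec3.B} (applied with the given $r$ and $k=r-1$) guarantees that $f^{(r-1)}$ exists in the sense \eqref{senseDerT}, lies in $L_p(\T)$, and
\begin{equation*}
\w_1(f^{(r-1)},\d)_p\le C\(\int_0^\d\frac{\w_r(f,t)_p^p}{t^{p(r-1)+1}}{\rm d}t\)^{\frac1p}\le C\d^{1/p}.
\end{equation*}
By Krotov's result this forces $f^{(r-1)}$ (after correction on a null set) to have the claimed step form with $\sum_k|d_k|^p<\infty$.

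The main subtlety—and the step I would be most careful about—is the strict-versus-non-strict inequality in the exponent. Corollary~\ref{corsec3.1Mod} and Corollary~\ref{corsec3.1E} are stated under $\a>1/p-1$, whereas the critical rate here corresponds exactly to $\a=1/p-1$. Hence I cannot simply quote Corollary~\ref{corsec3.1Mod}; instead I must invoke Corollary~\ref{cor.Sec2.1T} (direct, no lower bound on $\a$) for ``$\Leftarrow$'' and Theorem~\ref{thsec3.B} (likewise unrestricted) for ``$\Rightarrow$'', as done above. A second minor point to check is that the hypothesis of Theorem~\ref{thsec3.B}, $f\in L_p(\T)$, is automatic, and that once $f^{(r-1)}\in L_p(\T)$ exists in the sense \eqref{senseDerT}, Krotov's characterization (which is phrased for an arbitrary $L_p(\T)$ function) applies verbatim to it. No other obstacle is expected; the argument is essentially a two-line reduction in each direction once the right forms of the direct and inverse inequalities are selected.
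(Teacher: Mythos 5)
Your argument is correct and is in substance the paper's: the paper obtains the proposition by citing Corollary~\ref{corsec3.1Mod} together with Krotov's theorem, and your two directions simply unpack that corollary into its ingredients (Corollary~\ref{cor.Sec2.1T} for ``$\Leftarrow$'' and Theorem~\ref{thsec3.B} for ``$\Rightarrow$''), with the same exponent computations. The only thing to correct is your stated reason for avoiding Corollary~\ref{corsec3.1Mod}: with $r\mapsto r-1$, $k=1$ the relevant exponent is $\a=1/p$ (Krotov's rate is $h^{1/p}$ and $(r-1)+\a=r-1+1/p$), which satisfies the strict condition $\a>1/p-1$, so the corollary applies directly; your detour is harmless --- and in the ``$\Rightarrow$'' direction even slightly cleaner, since Theorem~\ref{thsec3.B} needs no a priori smoothness of $f$ --- but it was not forced by any borderline case.
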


A sharper version of this result was obtained by another method in~\cite{Ko03} (see also~\cite[4.8.26]{TB}).

%\begin{remark}
%Note that the implication $(i)\Rightarrow (ii)$ holds for any $f\in
%L_p(\T)$ and $\a>0$. At the same time the condition
%$\w_{k}(f^{(r)},\d)_p=\mathcal{O}(\d^{\frac1p-1})$ does not imply
%that $\w_{r+k}(f,\d)_p=\mathcal{O}(\d^{r+\frac1p-1})$. Indeed, for
%the function $f_0(x)=\log x$ for $x\in (0,2\pi]$ and
%$f_0(x+2\pi)=f_0(x)$ one has that
%$\w_{1}(f,\d)_p=\mathcal{O}(\d^{\frac1p-1})$, but $\w_{2}(f,\d)_p\ge
%C(p)\d$ for sufficiently small $\d>0$.
%\end{remark}

\subsection{\bf The sharpness of the main results in the periodic case}
To show the sharpness of Theorem~\ref{thsec3.1} and Theorem~\ref{th1T}, we use the next result about estimates from below for the error of the best approximation of periodic functions in $L_p(\T)$, $0<p<1$. Here, we formulate a slightly improved version of Theorem~1 from~\cite{Ko07} (see also Theorem~\ref{thbelowS}).
%Note that in the case
%$p\ge 1$ such result was obtained early by Rathore~\cite{Rath94}.

\begin{theorem}\label{thCT}
Let $f\in L_p(\T)$, $0<p<1$, and $s,n\in \N$. Then the following
assertions are equivalent:

\smallskip

\noindent $(i)$  for some $k>s+1/p-1$ there exist constants $M>0$ and
$\lambda>0$ such that for any $h\in (\lambda/n,1)$ one has
\begin{equation}\label{eqML}
  \w_s(f,h)_p\le M\w_k(f,h)_p,
\end{equation}

\noindent $(ii)$ there exists a constant $L>0$ such that
$$
\w_s\(f,\frac1n\)_p\le L E_n(f)_p.
$$
In particular, if inequality~\eqref{eqML} holds, then the constant $L$ depends only on $s$, $k$, $p$, $M$, and $\lambda$.
\end{theorem}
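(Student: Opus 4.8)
The plan is to prove the two implications separately, following and refining the argument of Theorem~1 in~\cite{Ko07}; the improvement to be incorporated is the localization of the reverse inequality in $(i)$ to the range $(\lambda/n,1)$ and the quantitative control of $L$ in terms of $s,k,p,M,\lambda$. Besides Lemmas~\ref{lemJT}--\ref{lemNikT} and the properties~\eqref{eqM0}--\eqref{eqM2}, I would use Bernstein's inequality $\|T_n^{(r)}\|_p\le Cn^{r}\|T_n\|_p$ in $L_p(\T)$, $0<p<\infty$ (which follows from Lemma~\ref{lemNST} on taking $h=\pi/n$), and the converse (Bernstein--Stechkin-type) inequality
\[
  \w_j(f,1/n)_p^p\le Cn^{-jp}\sum_{\nu=0}^{n}(\nu+1)^{jp-1}E_\nu(f)_p^p,\qquad j\in\N,
\]
which in $L_p(\T)$, $0<p<1$, is obtained from Lemmas~\ref{lemJT} and~\ref{lemNST} by the same dyadic telescoping used in the proof of Theorem~\ref{thsec3.1}: with near-best polynomials $T_{2^\mu}$, write $f=(f-T_{2^N})+\sum_\mu(T_{2^{\mu+1}}-T_{2^\mu})$ and estimate each block by Stechkin--Nikolskii and Bernstein.

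For $(i)\Rightarrow(ii)$ I would first convert the scale-range hypothesis into single-scale reverse inequalities: for each $\nu\le n$, picking $h\in(\lambda/n,1)$ with $h\asymp\max(\lambda,1)/\nu$ and using monotonicity of $\w_s$ together with~\eqref{eqM2} to pass between $h$ and $1/\nu$ (the multiplier being bounded), one gets $\w_s(f,1/\nu)_p\le C\w_k(f,1/\nu)_p$ with $C=C(M,\lambda,k,s,p)$ independent of $\nu$. Combining this with Lemma~\ref{lemJT} in the form $E_\nu(f)_p\le C\w_s(f,1/\nu)_p$ and with the converse inequality for $\w_k$ yields the self-improving estimate $\nu^{kp}E_\nu(f)_p^p\le C\sum_{\mu\le\nu}(\mu+1)^{kp-1}E_\mu(f)_p^p$ for $\nu\le n$; solving this Gronwall-type recursion — this is where $k>s+\tfrac1p-1$ enters, guaranteeing convergence of the relevant weighted dyadic series, of ratio $2^{(s+\frac1p-1-k)p}<1$ — one bounds $\sum_{\mu\le n}(\mu+1)^{kp-1}E_\mu(f)_p^p$ by $Cn^{kp}E_n(f)_p^p$. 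Feeding this back into the converse inequality for $\w_s$ and invoking the single-scale estimate at $\nu=n$ gives $\w_s(f,1/n)_p\le CE_n(f)_p$, i.e.~$(ii)$, with $L$ depending only on $s,k,p,M,\lambda$.

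For $(ii)\Rightarrow(i)$ I would fix $k$ to be the least integer exceeding $s+\tfrac1p-1$ and, say, $\lambda=2$. Given $h\in(\lambda/n,1)$, choose $m$ with $1/(m+1)<h\le 1/m$, so $m<n$. The key point is to propagate $(ii)$ downward to the scale $1/m$, i.e.~to prove $\w_s(f,1/m)_p\le CE_m(f)_p$ for every $m\le n$ with $C=C(L,s,k,p)$. I would split $\w_s(f,1/m)_p^p\le\w_s(f-T_m,1/m)_p^p+\w_s(T_m,1/m)_p^p$ for near-best $T_m\in\mathcal{T}_m$, estimate the first term by $C\|f-T_m\|_p^p=CE_m(f)_p^p$ via~\eqref{eqM1}, and the second by $Cm^{-sp}\|T_m^{(s)}\|_p^p$ via Lemma~\ref{lemNST}; expanding $T_m$ into dyadic blocks and applying Bernstein reduces this to the Bernstein--Stechkin sum $m^{-sp}\sum_{\nu\le m}(\nu+1)^{sp-1}E_\nu(f)_p^p$, which has to be shown to be $\le CE_m(f)_p^p$ — and here $(ii)$ together with the gap $k>s+\tfrac1p-1$ is used once more (the latter ruling out a decay of $E_\nu(f)_p$ faster than the saturation rate of order $s$, hence excluding a logarithmic loss). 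Granting $\w_s(f,1/m)_p\le CE_m(f)_p$ for $m\le n$, Lemma~\ref{lemJT} gives $E_m(f)_p\le C\w_k(f,1/m)_p$, and then monotonicity of the moduli and~\eqref{eqM2} (comparing the scales $1/m$, $1/(m+1)$, $h$, which differ by bounded factors) yield $\w_s(f,h)_p\le C\w_s(f,1/m)_p\le CE_m(f)_p\le C\w_k(f,1/m)_p\le C\w_k(f,h)_p$, which is~\eqref{eqML}.

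The genuine obstacle, common to both directions, is precisely this passage between the single reference scale $1/n$ (on which $(ii)$ is formulated) and the full dyadic ladder of smaller scales: one must control the Bernstein--Stechkin sums by the single quantity $E_n(f)_p$ without picking up a spurious power of $n$, and it is exactly the spectral gap $k-(s+\tfrac1p-1)>0$ that makes the underlying geometric/Gronwall estimates converge. Keeping the bookkeeping quantitative, so that $L$ depends only on $s,k,p,M$ and $\lambda$, is the delicate part; for the full details I would follow~\cite{Ko07}, adapting the argument to the restricted range $(\lambda/n,1)$.
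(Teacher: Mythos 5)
Your overall skeleton (Jackson + Stechkin--Nikolskii + the dyadic converse inequality) matches the toolkit the paper uses, but the central step of your $(i)\Rightarrow(ii)$ argument has a genuine gap. From \eqref{eqML}, Lemma~\ref{lemJT} and the converse inequality you only obtain the recursion
$\nu^{kp}E_\nu(f)_p^p\le C\sum_{\mu\le\nu}(\mu+1)^{kp-1}E_\mu(f)_p^p$ for $\nu\le n$, and this recursion alone does \emph{not} imply your claimed bound $\sum_{\mu\le n}(\mu+1)^{kp-1}E_\mu(f)_p^p\le Cn^{kp}E_n(f)_p^p$: for the sequence $E_\mu^p=(\mu+1)^{-kp}$ for $\mu\le n/2$ and $E_\mu^p$ arbitrarily small for $\mu>n/2$ the recursion holds with an absolute constant, while the left-hand side is of order $\log n$ and the right-hand side is arbitrarily small. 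In fact, under \eqref{eqML} one has $\w_k(f,1/n)_p\asymp\w_s(f,1/n)_p$, so your intermediate claim is essentially equivalent to the conclusion $(ii)$ itself; what the dyadic summation with ratio $2^{(s+1/p-1-k)p}<1$ actually gives is only
$\sum_{\nu\le n}\nu^{kp-1}E_\nu(f)_p^p\le C M^p n^{kp}\w_k(f,1/n)_p^p$, and feeding that back into the converse inequality returns the tautology $\w_k\le C\w_k$, not $\w_k(f,1/n)_p\le CE_n(f)_p$.

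The missing mechanism (this is how the paper argues, see the proof of Theorem~\ref{thbelowS} and \cite{Ko07}) is a comparison of \emph{two} scales together with the monotonicity of $E_\nu(f)_p$ for $\nu>n$: apply the converse inequality at the finer scale $1/(mn)$, split the sum at $\nu=n$, estimate the block $\nu\le n$ by the display above, bound the tail by $E_n(f)_p^p\sum_{n<\nu\le mn}\nu^{kp-1}$ using $E_\nu\le E_n$, and bound $\w_k(f,1/(mn))_p$ from below by $c\,m^{-(s+1/p-1)}\w_k(f,1/n)_p$ via the doubling property derived from \eqref{eqML}, \eqref{eqM1}, \eqref{eqM2}; since $k>s+1/p-1$, choosing $m=m(s,k,p,M,\lambda)$ large enough makes $m^{(k-s-1/p+1)p}$ beat the constants and yields $E_n(f)_p\ge c\,\w_k(f,1/n)_p\ge cM^{-1}\w_s(f,1/n)_p$, which is $(ii)$ with $L=L(s,k,p,M,\lambda)$. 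None of this appears in your sketch, so the quantitative heart of the theorem is not proved. A smaller issue: in $(ii)\Rightarrow(i)$ you assert a propagation $\w_s(f,1/m)_p\le CE_m(f)_p$ for all $m\le n$ from the hypothesis at the single scale $n$, again deferring the crucial summation bound to an unproved claim; note that for this direction the paper simply invokes Jackson's inequality, since the constants $M,\lambda$ in $(i)$ are not required to be independent of $n$.
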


Let us consider the function
$$
\vp_\varepsilon(x)=\left\{
                   \begin{array}{ll}
                     \displaystyle \frac{x}{\varepsilon}, & \hbox{$x\in[0,\,\varepsilon)$,} \\
                     \displaystyle 1, & \hbox{$x\in[\e,\,\pi-\e)$,} \\
                     \displaystyle \frac{\pi-x}{\varepsilon}, & \hbox{$x\in[\pi-\e,\,\pi)$,} \\
                     \displaystyle 0, & \hbox{$x\in[\pi,\,2\pi)$.}
                   \end{array}
                 \right.
                 $$
Let $\vp_{\e,0}(x)=\vp_\e(x)-\frac1{2\pi}\int_0^{2\pi}\vp_\e(t){\rm d}t$ and let $f_{\e,r}(x)=I_{r-1} \vp_{\e,0}(x)$, $r=1,2\dots$, be the $r$th periodic integral of $\vp_{\e,0}(x)$, that is
$$
f_{\e,r}(x)=\int_0^x f_{\e,r-1}(t){\rm d}t+\gamma_{r-1},\quad r=2,3,\dots,
$$
where $\gamma_{r-1}$ is chosen so that $\int_{0}^{2\pi} f_{\e,r}(t){\rm d}t=0$.

One can verify that for sufficiently small $h$ and $\e$ such that $0<\e<h$
\begin{equation}\label{eqsharp1}
  \w_{r+\nu}(f_{\e,r},h)_p\asymp h^{\frac1p},\quad \nu=0,1,2,\dots,
\end{equation}
where $\asymp$ is a two-sided inequality with positive constants  independent of $\e$ and $h$.
It is also easy to see that $f_{\e,r}^{(r)}(x)=\vp'_{\e,0}(x)=\vp'_\e(x)$ and, hence,
\begin{equation}\label{eqDopMod}
  \w_1(f_{\e,r}^{(r)},h)_p\le C(p)\e^{-1}(\min\{\e,h\})^\frac1p.
\end{equation}
By Theorem~\ref{thCT} and~\eqref{eqsharp1}, there exists a constant $C=C(p,r)>0$ such that for any $n\in \N$ and sufficiently small $\e>0$
\begin{equation}\label{eqsharp2}
  E_n(f_{\e,r})_p\ge C n^{-\frac1p}.
\end{equation}
At the same time, by Lemma~\ref{lemJT} and~\eqref{eqDopMod}, we obtain for any $\g>0$
\begin{equation}\label{eqsharp3}
\begin{split}
 E_n(f_{\e,r}^{(r)})_p^p&+\sum_{\nu=n+1}^\infty \nu^{-p-\g}E_\nu(f_{\e,r}^{(r)})_p^p\\
 &\le C\(\w_1(f_{\e,r}^{(r)},n^{-1})_p^p+\sum_{\nu=n+1}^\infty \nu^{-p-\g}\w_1(f_{\e,r}^{(r)},\nu^{-1})_p^p\)\\
 &\le C\(\e^{1-p}+\e^{1-p}\sum_{\nu=n+1}^{[1/\e]}\nu^{-p-\gamma}+\e^{-p}\sum_{\nu=[1/\e]+1}^\infty \nu^{-p-\g-1}\)\\
 &\le C\e^{\min(1-p,\g)}.
\end{split}
 \end{equation}

Thus, combining (\ref{eqsharp2}) and (\ref{eqsharp3}), we get the following proposition about the sharpness of Theorem~\ref{thsec3.1}.
This proportion is also a strengthening of inequality~\eqref{eqKop}.

\begin{proposition}\label{pr1T}
  Let $0<p<1$, $r\in \N$, and $\g>0$. Then for any $B\in \R$, a constant $C>0$,
  and $n\ge n_0$ there exists a function $f_0\in C^{r-1}(\T)$ such that
  \begin{equation*}
    E_n(f_0)_p>Cn^B \(E_n(f_0^{(r)})_p+\(\sum_{\nu=n+1}^\infty \nu^{-p-\g} E_\nu (f_0^{(r)})_p^p\)^\frac1p\).
  \end{equation*}
\end{proposition}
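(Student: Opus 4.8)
The plan is to use the family $f_{\e,r}$ already constructed above, choosing the parameter $\e$ small in a way depending on $n$, on $B$, and on the prescribed constant (which I shall write $C_0$, so as not to collide with the generic constant $C$ occurring in~\eqref{eqsharp2}--\eqref{eqsharp3}). The idea is that the lower bound~\eqref{eqsharp2} for $E_n(f_{\e,r})_p$ does not depend on $\e$, whereas the upper bound~\eqref{eqsharp3} for the expression on the right-hand side tends to $0$ as $\e\to0$; hence, after fixing $n$ (and $B$, $C_0$), a sufficiently small $\e$ forces the required reversed inequality.

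First I would check that $f_0:=f_{\e,r}$ is admissible: since $f_{\e,r}^{(r-1)}=\vp_{\e,0}$ is continuous (indeed piecewise linear) on $\T$, we have $f_0\in C^{r-1}(\T)$, and $f_0^{(r)}=\vp_\e'\in L_p(\T)$, so both sides of the claimed inequality are defined. By~\eqref{eqsharp2} there is $c_1=c_1(p,r)>0$ with $E_n(f_0)_p\ge c_1 n^{-1/p}$ for every $n$ and every sufficiently small $\e>0$. Putting $a:=E_n(f_0^{(r)})_p$ and $b:=\big(\sum_{\nu=n+1}^\infty\nu^{-p-\g}E_\nu(f_0^{(r)})_p^p\big)^{1/p}$ and writing $\kappa:=\min(1-p,\g)/p>0$, inequality~\eqref{eqsharp3} (applied with the given $\g$) reads $a^p+b^p\le c_3\,\e^{\kappa p}$ with $c_3=c_3(p,r,\g)>0$; since $t\mapsto t^{1/p}$ is increasing we have $a\le(a^p+b^p)^{1/p}$ and $b\le(a^p+b^p)^{1/p}$, whence $a+b\le 2(a^p+b^p)^{1/p}\le 2c_3^{1/p}\,\e^{\kappa}=:c_2\,\e^{\kappa}$.

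It then suffices to choose $\e=\e(n)>0$ small enough that (i) it meets the smallness requirements of~\eqref{eqsharp2} and~\eqref{eqsharp3}, and (ii) it satisfies $C_0\,n^B c_2\,\e^{\kappa}<c_1 n^{-1/p}$, i.e. $\e<(c_1/(C_0 c_2))^{1/\kappa}\,n^{-(B+1/p)/\kappa}$; such an $\e$ exists for every $n\ge n_0$, with $n_0$ inherited from the ranges of validity of~\eqref{eqsharp1}--\eqref{eqsharp3}. With this choice,
$$
C_0 n^B\Big(E_n(f_0^{(r)})_p+\big(\textstyle\sum_{\nu=n+1}^\infty\nu^{-p-\g}E_\nu(f_0^{(r)})_p^p\big)^{1/p}\Big)=C_0 n^B(a+b)\le C_0 n^B c_2\,\e^{\kappa}<c_1 n^{-1/p}\le E_n(f_0)_p,
$$
which is exactly the asserted inequality.

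I do not expect a genuine obstacle here: all the analytic content is already in~\eqref{eqsharp1}--\eqref{eqsharp3} (and, behind them, Theorem~\ref{thCT}, Lemma~\ref{lemJT} and the estimate~\eqref{eqDopMod} for $\w_1(f_{\e,r}^{(r)},\cdot)_p$). The only points deserving care are the routine bookkeeping --- combining the two terms via subadditivity of $t\mapsto t^p$ for $0<p<1$ and then solving for $\e$ as an explicit negative power of $n$ times a constant depending on $p,r,\g,B,C_0$ --- and the remark that the inequality so obtained is automatically strict, since it amounts to $\e$ lying strictly below a positive threshold.
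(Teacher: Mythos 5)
Your proposal is correct and follows exactly the paper's intended argument: the paper's own proof of this proposition consists precisely of combining the $\e$-independent lower bound~\eqref{eqsharp2} with the upper bound~\eqref{eqsharp3}, which decays like $\e^{\min(1-p,\g)}$, and then taking $\e=\e(n,B,C)$ sufficiently small. Your bookkeeping (subadditivity of $t\mapsto t^p$ and the explicit threshold for $\e$) just makes explicit what the paper leaves implicit.
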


%\begin{corollary}\label{cor.Sec2.1}
%Under conditions of Theorem~\ref{th1}, we have
%\begin{equation}\label{eq.cor.Sec2.1.1}
%    \w_{r+k}(f,\d)_p\leq C\d^{r+\frac{1}{p}-1}\(\int_0^\d\frac{\w_{k}(f^{(r)},t)_p^p}{t^{2-p}}dt\)^\frac{1}{p}\,,\quad \d>0\,,
%\end{equation}
%where $C$ is a constant independent of $f$ and $\d$.
%\end{corollary}

Now we would like to show the sharpness of inequality
(\ref{eq.Sec2.1T}). It follows from \cite[p.~188]{PePo} that (\ref{eq.Sec2.1T})
does not hold without the integral in the right-hand of the inequality. By using the same arguments as
after Theorem~\ref{thCT}, we can prove the following stronger assertion.

\begin{proposition}\label{pr.Sec2.1}
 Let $0<p<1$, $r\in \N$, and $\g>0$. Then for any $B\in \R$, a constant $C>0$,
  and $\d\in (0,\d_0)$ there exists a function $f_0\in C^{r-1}(\T)$ such that
$$
\w_{r+k}(f_0,\d)_p>C\d^{B}\(\w_k(f_0^{(r)},\d)_p^p+\int_0^\d
    \frac{\w_k(f_0^{(r)},t)_p^p}{t^{2-p-\g}}{\rm d}t\)^\frac1p\,.$$
%where $C_\varepsilon\rightarrow+\infty$ as $\varepsilon\rightarrow0$.
\end{proposition}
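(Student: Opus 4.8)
The strategy mirrors exactly the argument used after Theorem~\ref{thCT} to establish Proposition~\ref{pr1T}: one exhibits the same family of test functions $f_{\e,r}$, uses the lower bound for $\w_{r+k}(f_{\e,r},\d)_p$ coming from~\eqref{eqsharp1}, and uses the upper bounds for $\w_k(f_{\e,r}^{(r)},t)_p$ coming from~\eqref{eqDopMod}, then sends $\e\to0$ while holding $\d$ fixed so that the right-hand side collapses but the left-hand side does not. First I would fix $\d\in(0,\d_0)$ with $\d_0$ small, and take $\e>0$ much smaller than $\d$. By~\eqref{eqsharp1} we have $\w_{r+k}(f_{\e,r},\d)_p\asymp\d^{1/p}$, with constants independent of $\e$; in particular the left-hand side stays bounded below by $c\,\d^{1/p}$ uniformly in $\e$.

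Next I would estimate the right-hand side. Using~\eqref{eqDopMod} (and~\eqref{eqM1} to pass from $\w_1$ to $\w_k$ of the derivative, or simply noting $f_{\e,r}^{(r)}=\vp_\e'$ directly), we have $\w_k(f_{\e,r}^{(r)},t)_p\le C\e^{-1}(\min\{\e,t\})^{1/p}$. Hence
$$
\w_k(f_{\e,r}^{(r)},\d)_p^p\le C\e^{-p}\cdot\e=C\e^{1-p},
$$
and, splitting the integral at $t=\e$,
$$
\int_0^\d\frac{\w_k(f_{\e,r}^{(r)},t)_p^p}{t^{2-p-\g}}{\rm d}t
\le C\e^{-p}\int_0^\e t^{1-2+p+\g}{\rm d}t+C\e^{-p}\cdot\e\int_\e^\d t^{-2+p+\g}{\rm d}t
\le C\e^{\min(\g,\,1-p)}\,\d^{\,(\text{const})},
$$
where in the second integral I used $\w_k(f_{\e,r}^{(r)},t)_p^p\le C\e^{-p}\e=C\e^{1-p}$ for $t\ge\e$, and the exponent on $\e$ is positive because $\g>0$ and $1-p>0$; the dependence on $\d$ is harmless since $\d$ is fixed. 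Therefore the whole parenthesis on the right-hand side is $\le C(p,r,\g,\d)\,\e^{\,\mu/p}$ with $\mu=\min(\g,1-p)>0$.

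Combining the two bounds: for $f_0=f_{\e,r}$ the claimed inequality would force
$$
c\,\d^{1/p}\le C\,\d^{B}\cdot C(p,r,\g,\d)\,\e^{\mu/p},
$$
whose right-hand side tends to $0$ as $\e\to0^+$ while the left-hand side is a fixed positive number. Choosing $\e$ small enough (depending on $p,r,\g,\d,B,C$) yields a contradiction, which proves the proposition; note $f_{\e,r}\in C^{r-1}(\T)$ by construction. The only genuinely delicate point — and it is the same point as in the proof of~\eqref{eqsharp1} — is the verification that $\w_{r+k}(f_{\e,r},\d)_p$ is bounded \emph{below} by $c\,\d^{1/p}$ uniformly in $\e$ (the upper bound in~\eqref{eqsharp1} is routine); this is exactly what~\eqref{eqsharp1} asserts and may be invoked directly, so no further work is needed here. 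Everything else is the elementary integral splitting above.
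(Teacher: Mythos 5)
Your argument is correct and is exactly the one the paper intends when it says the proposition follows ``by using the same arguments as after Theorem~\ref{thCT}'': the test functions $f_{\e,r}$, the $\e$-uniform lower bound $\w_{r+k}(f_{\e,r},\d)_p\gtrsim\d^{1/p}$ from~\eqref{eqsharp1}, and the integral-split upper bound via~\eqref{eqDopMod}, which is the continuous analogue of~\eqref{eqsharp3}, giving a right-hand side of order $\e^{\min(\g,1-p)}$ (up to a harmless logarithm when $p+\g=1$) that vanishes as $\e\to0$ with $\d$ fixed. No gaps beyond the paper's own level of detail.
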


In particular, the above proposition implies that if $\g>0$, then the following inequality
\begin{equation*}
\label{eq.Sec1.3TS}
    \w_{r+k}(f,\d)_p\leq C\d^{r+\frac{1}{p}-1}\(\int_0^\d\frac{\w_{k}(f^{(r)},t)_p^p}{t^{2-p-\g}}{\rm d}t\)^\frac{1}{p}
\end{equation*}
does not
hold for all $f\in C^{r-1}(\T)$ with the constant $C$ independent of $f$ and $\d$ (cf.~Corollary~\ref{cor.Sec2.1T}).

%\begin{proof}
%Let
%$$f_\varepsilon(x)=\left\{
%                   \begin{array}{ll}
%                     \displaystyle \vp_\e(x), & \hbox{$r=1$,} \\
%                     \displaystyle \frac{1}{(r-1)!}\int_0^x(x-t)^{r-2}\vp_\varepsilon(t)dt, & \hbox{$r\geq2$,}
%                   \end{array}
%                 \right.
%                 $$
%where
%$$\vp_\varepsilon(x)=\left\{
%                   \begin{array}{ll}
%                     \displaystyle \frac{x}{\varepsilon}, & \hbox{$x\in[0,\,\varepsilon]$,} \\
%                     \displaystyle 1, & \hbox{$x\in(\varepsilon,\,1]$.}
%                   \end{array}
%                 \right.
%                 $$
%The straightforward computations show that $\w_{r+k}(f_\varepsilon,\d)_p\geq C(\d)>0$, but
%$$\int_0^\d
%    \frac{\w_k(f_\varepsilon^{(r)},t)_p^p}{t^{\gamma p+1}}dt=O\(\varepsilon^{p\(\frac{1}{p}-1-\gamma\)}\), \quad \varepsilon\rightarrow0+\,,$$
%which prove Proposition~\ref{pr.Sec2.1}.
%\end{proof}

\section{Approximation of functions by splines. Non-periodic moduli of smoothness}

In this section, we consider the case of approximation of
functions by splines in the spaces $L_p[0,1]$ with the (quasi-)norm $\Vert \cdot \Vert_p=\Vert \cdot \Vert_{L_p[0,1]}$.

Denote by $\mathcal{S}_{m,n}$ the set of all spline functions of
degree $m-1$ with the knots $t_j=t_{j,n}:=j/n$, $j=0,\ldots,n,$ i.e.
$S\in \mathcal{S}_{m,n}$ if $S\in C^{m-2}[0,1]$ and $S$ is some
algebraic polynomial of degree $m-1$ in each interval
$(t_{j-1},\,t_j)$, $j=1,\ldots,n$. Recall that $\mathcal{P}_r$ denotes the set of all algebraic polynomials of degree at most $r$.

Let
$$
\E_{m,n}(f)_p=\inf_{S\in \Sp_{m,n}}\Vert f-S\Vert_p
$$
denote the error of the best approximation of a function $f$ by
splines $S\in \Sp_{m,n}$ in $L_p[0,1]$.

%In this section, we consider the case of the approximation of functions by splines in the spaces $L_p[0,1]$, $0<p<1$,
%and the the classical non-periodic moduli of smoothness.

\subsection{\bf Inequalities for the error of the best approximation of functions by splines}

The following result is a counterpart of Theorem~\ref{thsec3.1} in the case of approximation of functions by splines in the spaces $L_p[0,1]$.

\begin{theorem}\label{th1S}
Let $0<p<1$, $m$, $r\in \N$, $r<m$, and let $f$ be such that
$f^{(r-1)}\in AC[0,1]$ and
\begin{equation*}
%\label{eqthsec3.1.1S}
  \sum\limits_{\nu=1}^\infty \nu^{-p}\E_{m-r,\nu}(f^{(r)})_p^p<\infty\,.
\end{equation*}
Then for any $n\in \N$ we have
\begin{equation*}
%\label{eqthsec3.1.2S}
  \E_{m,n}(f)_p\leq\frac{C}{n^r}\(\E_{m-r,n}(f^{(r)})_p+n^{1-\frac{1}{p}}\(\sum\limits_{\nu=n+1}^\infty\nu^{-p}\E_{m-r,\nu}(f^{(r)})_p^p\)^{\frac{1}{p}}\)\,,
\end{equation*}
where $C$ is a constant independent of $f$ and $n$.
\end{theorem}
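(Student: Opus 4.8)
The plan is to mimic, almost verbatim, the proof of Theorem~\ref{thsec3.1}, replacing trigonometric polynomials by splines from $\Sp_{m,n}$ and using the spline analogues of the three auxiliary results (Jackson-type direct estimate, Stechkin--Nikolskii inequality, and Nikolskii inequality of different metrics) that the paper has available for splines. First I would fix $U_n\in\Sp_{m-r,n}$ realizing $\E_{m-r,n}(f^{(r)})_p$, and construct $S_n\in\Sp_{m,n}$ with $S_n^{(r)}=U_n-\tfrac1{?}\int U_n$ up to an additive polynomial of degree $<r$ chosen so that $S_n^{(r)}$ equals $U_n$ shifted by a constant; the key structural point is that $r$-fold integration maps $\Sp_{m-r,n}$ into $\Sp_{m,n}$ (degree goes up by $r$, smoothness goes up by $r$, same knots), so $S_n$ is indeed a legitimate spline of degree $m-1$. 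Then, picking $m_0\in\N$ with $2^{m_0-1}\le n<2^{m_0}$, I would split
\begin{equation*}
  \E_{m,n}(f)_p^p\le \E_{m,n}(S_{2^{m_0}})_p^p+\E_{m,n}(f-S_{2^{m_0}})_p^p
\end{equation*}
exactly as in~\eqref{eqthsec3.1.3}.

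For the first term, I would estimate $\E_{m,n}(S_{2^{m_0}})_p=\E_{m,n}(S_{2^{m_0}}-S_n)_p$ by the spline Jackson theorem in terms of $\w_{r+1}(S_{2^{m_0}}-S_n,1/n)_p$, then rewrite this modulus via the finite-difference identity $\D_h^{r+1}=\D_h^r\D_h^1$ and bound it by $\sup_{u>0}\w_r(\D_u^1(S_{2^{m_0}}-S_n),1/n)_p$; since $\D_u^1(S_{2^{m_0}}-S_n)$ is again a spline (of degree $m-1$ on the refined-but-compatible partition, or one handles the shift by $u$ by restricting to the appropriate subinterval as in the definition of the non-periodic modulus), the spline Stechkin--Nikolskii inequality converts $\w_r(\cdot,2^{-m_0})_p$ into $2^{-m_0 r}\|(\cdot)^{(r)}\|_p$, and then $(\cdot)^{(r)}=\D_u^1(U_{2^{m_0}}-U_n)$, whose $L_p$ quasi-norm is $\lesssim\|U_{2^{m_0}}-U_n\|_p\lesssim\E_{m-r,n}(f^{(r)})_p$. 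This yields $\E_{m,n}(S_{2^{m_0}})_p\le C n^{-r}\E_{m-r,n}(f^{(r)})_p$, the analogue of~\eqref{eqthsec3.1.6}.

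For the second term I would first establish the telescoping bound
\begin{equation*}
  \E_{m,n}(f-S_{2^{m_0}})_p^p\le\sum_{\mu=m_0}^\infty \E_{m,n}(S_{2^{\mu+1}}-S_{2^\mu})_p^p,
\end{equation*}
which reduces to showing $\E_{m,n}(f-S_{2^N})_p\to0$. For this I would pass to $L_1$: by the spline Nikolskii inequality of different metrics, $\sum_\mu\|U_{2^{\mu+1}}-U_{2^\mu}\|_1^p\lesssim\sum_\mu 2^{(1-p)\mu}\E_{m-r,2^\mu}(f^{(r)})_p^p\lesssim\sum_\nu\nu^{-p}\E_{m-r,\nu}(f^{(r)})_p^p<\infty$, so $U_{2^\mu}\to f^{(r)}$ in $L_1$; then using the $L_1$ spline direct estimate $\E_{m,n}(g)_1\le C n^{-r}\E_{m-r,n}(g^{(r)})_1$ together with $\|\cdot\|_p\le C\|\cdot\|_1$ on $[0,1]$ and Hölder, I get $\E_{m,n}(f-S_{2^N})_p\le Cn^{-r}\|f^{(r)}-U_{2^N}\|_1\to0$. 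Finally, I would bound each term $\E_{m,n}(S_{2^{\mu+1}}-S_{2^\mu})_p$ exactly as in~\eqref{eqthsec3.1.10}, gaining the factor $(2^{\mu+1}/n)^{r+1/p-1}$ from property~\eqref{eqM2} applied at scale $2^{-\mu-1}$ versus $n^{-1}$ (valid since $n<2^{m_0}\le2^{\mu+1}$), then Stechkin--Nikolskii, then $\|U_{2^{\mu+1}}-U_{2^\mu}\|_p\le C\E_{m-r,2^\mu}(f^{(r)})_p$, obtaining
\begin{equation*}
  \E_{m,n}(S_{2^{\mu+1}}-S_{2^\mu})_p\le C n^{-r-\frac1p+1}2^{\mu(\frac1p-1)}\E_{m-r,2^\mu}(f^{(r)})_p.
\end{equation*}
Summing the $p$-th powers and converting the dyadic sum to $\sum_{\nu=n+1}^\infty\nu^{-p}\E_{m-r,\nu}(f^{(r)})_p^p$ gives the second term of the claimed estimate, and combining everything finishes the proof.

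The main obstacle is not the structure of the argument — it is purely formal transcription — but making sure the three auxiliary ingredients are genuinely available for splines in $L_p$, $0<p<1$: the Jackson-type estimate $\E_{m,n}(g)_p\le C\w_m(g,1/n)_p$, the Stechkin--Nikolskii equivalence $h^r\|S^{(r)}\|_p\asymp\|\D_h^r S\|_p$ for $S\in\Sp_{m,n}$ and $h\lesssim1/n$, and the Nikolskii different-metric inequality $\|S\|_q\le Cn^{1/p-1/q}\|S\|_p$ for $S\in\Sp_{m,n}$. These are standard and the paper surely records them in Section~3 (or they follow from the trigonometric versions via local arguments), but one must be careful that $\D_u^1 S$ for large $u$, and differences on subintervals $[0,1-rh]$ appearing in the non-periodic modulus, stay within a controlled spline class so that Stechkin--Nikolskii still applies — this is the one technical point that requires a sentence of justification rather than a blind copy of the periodic proof.
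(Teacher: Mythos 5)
Your overall architecture (choose $U_n\in\Sp_{m-r,n}$ realizing $\E_{m-r,n}(f^{(r)})_p$, integrate to get $S_n\in\Sp_{m,n}$, split off $\E_{m,n}(S_{2^{m_0}})_p$, telescope the tail, pass to $L_1$ for convergence, sum dyadic blocks) is exactly the paper's scheme, and those parts are fine. The genuine problem is that you also imported the two pieces of the periodic proof that the paper explicitly tells you to drop: the mean-normalization of $U_n$ and the auxiliary function $\tau_u=\D_u^1(S_{2^{m_0}}-S_n)$. In the paper's proof of Theorem~\ref{th1S} it is remarked that the spline case is \emph{simpler} precisely because neither step is needed: on $[0,1]$ there is no periodicity constraint, so one takes $S_n$ with $S_n^{(r)}=U_n$ exactly (no subtracted constant), reduces the order of the modulus by \eqref{eqM1} (Lemma~\ref{lemJS} gives $\E_{m,n}(g)_p\le C\w_m(g,1/n)_p\le C\w_r(g,1/n)_p$), rescales the order-$r$ modulus by \eqref{eqM2}, and applies Lemma~\ref{lemNSS} \emph{directly} to the genuine splines $S_{2^{m_0}}-S_n\in\Sp_{m,2^{m_0}}$ and $S_{2^{\mu+1}}-S_{2^\mu}\in\Sp_{m,2^{\mu+1}}$; this is the scheme of the proof of Theorem~\ref{th3} that the paper points to. The $\tau_u$ trick exists in the periodic proof only to annihilate the constant created by making $T_n^{(r)}$ mean-free; once you keep that normalization, you are forced into the trick, and that is where your argument breaks.

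Concretely, the step you flag as needing ``a sentence of justification'' --- applying a spline Stechkin--Nikolskii inequality to $\tau_u$, i.e.\ $\w_r(\tau_u,2^{-m_0})_p\le C\,2^{-m_0r}\Vert\tau_u^{(r)}\Vert_p$ uniformly in $u>0$ --- is not covered by Lemma~\ref{lemNSS} (whose constants are tied to the equispaced knots $j/n$) and is in fact false for $0<p<1$. The function $\tau_u$ is a spline on the merged partition $\{j2^{-m_0}\}\cup\{j2^{-m_0}-u\}$, whose gaps can be arbitrarily small compared with $2^{-m_0}$, and for $p<1$ such clustering destroys the inequality $\Vert\D_h^r g\Vert_p\le Ch^r\Vert g^{(r)}\Vert_p$. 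For example, with $m=2$, $r=1$, let $G\in\Sp_{2,N}$ be the hat function supported on $[0,2/N]$ and $\tau_\e=\D_\e^1G$ with $\e\ll 1/N$: then $\Vert\tau_\e'\Vert_p^p\asymp\e N^p$, so $N^{-p}\Vert\tau_\e'\Vert_p^p\asymp\e$, while $\D_{1/N}^1\tau_\e\equiv-2N\e$ on an interval of length $\asymp 1/N$, so $\w_1(\tau_\e,1/N)_p^p\ge cN^{p-1}\e^p$, and $N^{p-1}\e^p/\e=(N\e)^{p-1}\to\infty$ as $\e\to0$. The decoupling of the shift $u$ from the step $h$ is harmless for trigonometric polynomials, where translation invariance keeps $\tau_u\in\mathcal{T}_{2^m}$ so Lemma~\ref{lemNST} applies uniformly in $u$, but it fails for splines. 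The repair is not a sentence added to your step but its deletion: take $S_n^{(r)}=U_n$, estimate $\E_{m,n}(S_{2^{m_0}}-S_n)_p\le C\w_r(S_{2^{m_0}}-S_n,2^{-m_0})_p\le C2^{-m_0r}\Vert U_{2^{m_0}}-U_n\Vert_p$ via Lemma~\ref{lemNSS} with $\nu=k=r\le m-1$, and treat the dyadic blocks in the same way after rescaling, which reproduces the analogue of \eqref{eqthsec3.1.10} and then your summation argument goes through unchanged.
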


The next three lemmas are the main tools for proving Theorem~\ref{th1S} as well as other results in this section. These lemmas are analogues of
Lemmas~\ref{lemJT}--\ref{lemNikT} for splines.
The first lemma is the Jackson-type theorem  (see~\cite{Os80}, see also~\cite[Ch. 2]{DeLo}).

\begin{lemma}\label{lemJS}
Let $f\in L_p[0,1]$, $0<p<1$, and $r$, $n\in \N$. Then
there exists a spline $S_n\in \mathcal{S}_{r,n}$ such that
$$
\|f-S_n\|_p\leq C\omega_{r}(f,n^{-1})_p\,,
$$
where $C$ is a constant independent of $f$ and $n$.
\end{lemma}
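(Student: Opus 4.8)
The plan is to mimic, in the $L_p$ setting with $0<p<1$, the standard construction of a near-best spline approximant via local polynomial pieces glued by a partition of unity (or, equivalently, via a quasi-interpolant). First I would fix $n$ and the uniform knot sequence $t_j=j/n$, and on each subinterval $I_j=[t_{j-1},t_j]$ (slightly enlarged to an interval $I_j^\ast$ of comparable length, to have room for a $B$-spline support) invoke Whitney's theorem: there is a polynomial $P_j\in\mathcal{P}_{r-1}$ with $\Vert f-P_j\Vert_{L_p(I_j^\ast)}\le C\,\w_r(f,I_j^\ast)_p$, where the Whitney constant depends only on $r$ and $p$ (the Whitney inequality is valid for $0<p<1$ with the usual proof). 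Then I would assemble the spline $S_n=\sum_j P_j N_j$, where $\{N_j\}$ is the standard $B$-spline basis of $\mathcal{S}_{r,n}$ forming a nonnegative partition of unity with $\supp N_j$ contained in a fixed number of adjacent subintervals; this $S_n$ lies in $\mathcal{S}_{r,n}$ by construction.

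The key estimate is then local: on each $I_j$ one writes $f-S_n=\sum_{i}(f-P_i)N_i$, the sum running over the boundedly many indices $i$ with $N_i$ not vanishing on $I_j$, and uses the $p$-triangle inequality
$$
\Vert f-S_n\Vert_{L_p(I_j)}^p\le\sum_{i}\Vert (f-P_i)N_i\Vert_{L_p(I_j)}^p\le\sum_{i}\Vert f-P_i\Vert_{L_p(I_i^\ast)}^p\le C\sum_i\w_r(f,I_i^\ast)_p^p,
$$
using $0\le N_i\le1$. Summing over $j$ and using that each $I_i^\ast$ is counted a bounded number of times gives $\Vert f-S_n\Vert_p^p\le C\sum_j\w_r(f,I_j^\ast)_p^p$. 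The final step is to dominate the sum of local moduli by the global one: since each $I_j^\ast$ has length $\asymp 1/n$, one has $\w_r(f,I_j^\ast)_p^p\le\w_r(f,c/n)_{L_p(I_j^\ast)}^p$, and the differences $\D_h^r f$ with $h\le c/n$ appearing in these local moduli, when raised to the $p$-th power and summed over $j$, telescope into at most a bounded multiple of $\Vert\D_h^r f\Vert_{L_p[0,1]}^p$ (the overlap multiplicity of the enlarged intervals is bounded); taking the supremum over $h\le c/n$ and finally absorbing the constant $c$ via property~\eqref{eqM2} yields $\Vert f-S_n\Vert_p\le C\,\w_r(f,1/n)_p$.

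The main obstacle is the bookkeeping in that last additivity step: one must be careful that "gluing" the local $L_p$ errors does not cost a factor growing with $n$, which is exactly where subadditivity of $t\mapsto t^p$ for $p<1$ is used in place of the triangle inequality, and where one needs the enlarged intervals $I_j^\ast$ to have bounded overlap so that the global modulus of smoothness genuinely controls $\bigl(\sum_j\w_r(f,I_j^\ast)_p^p\bigr)^{1/p}$. Everything else—Whitney's inequality for $0<p<1$, the partition-of-unity property of $B$-splines, and property~\eqref{eqM2}—is standard and can be quoted; I would cite~\cite{Os80} and~\cite[Ch.~2]{DeLo} for the details of the spline construction and simply indicate the modifications forced by $p<1$.
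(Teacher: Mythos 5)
Your outline stalls at the gluing step, and the problem there is not bookkeeping but the construction itself: if $P_j\in\mathcal{P}_{r-1}$ and $N_j$ is a $B$-spline of degree $r-1$, the product $P_jN_j$ is a piecewise polynomial of degree up to $2r-2$, so $S_n=\sum_j P_jN_j$ does \emph{not} lie in $\mathcal{S}_{r,n}$ (which by definition consists of piecewise polynomials of degree $r-1$ in $C^{r-2}$), except in the trivial case $r=1$. A partition of unity can only blend the local Whitney polynomials inside the spline space when the coefficient of each $N_j$ is a \emph{number}, not a polynomial. The standard repair — and essentially what one finds in Oswald~\cite{Os80} and in the quasi-interpolant treatment of \cite[Ch.~5, 12]{DeLo} — is to set $S_n=\sum_j\lambda_j(P_j)N_j$, where $\lambda_j$ are dual (de Boor--Fix type) functionals reproducing $\mathcal{P}_{r-1}$, applied to the local Whitney polynomial $P_j$ attached to the support of $N_j$ (they cannot be applied to $f$ itself, since for $p<1$ they are not bounded on $L_p$, which is the real reason the local polynomials enter). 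The error analysis then changes accordingly: on an interval $I$ one writes $f-S_n=(f-P_I)+\sum_j\lambda_j(P_I-P_j)N_j$ using local polynomial reproduction, and controls $|\lambda_j(P_I-P_j)|\le C\Vert P_I-P_j\Vert_{L_\infty}\le Cn^{1/p}\Vert P_I-P_j\Vert_{L_p}$ by the Nikolskii-type inequality for polynomials on intervals of length $\asymp 1/n$ — an ingredient entirely absent from your sketch — before reducing to the local Whitney errors.

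The remaining ingredients you list are sound and do appear in the standard argument: Whitney's inequality is valid for $0<p<1$, the $p$-power subadditivity replaces the triangle inequality, and the bounded-overlap summation of local moduli into $\w_r(f,c/n)_p^p$ together with \eqref{eqM2} is exactly how the global modulus is recovered. Note also that the paper itself does not prove this lemma but quotes it from~\cite{Os80}, so the benchmark is the classical proof just described; as written, your proposal would need the quasi-interpolant (or some other device producing an element of $\mathcal{S}_{r,n}$, e.g.\ first building a discontinuous piecewise polynomial and then approximating it by a smooth spline) before the local-to-global estimate can even be applied.
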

%The main tools for the proving of Theorem~\ref{th1S} are the following two lemmas.
The second lemma gives  equivalences for moduli of smoothness of splines
in the spaces $L_p[0,1]$ (see~\cite{Kop06} and~\cite{HuYu}).

\begin{lemma}\label{lemNSS}
Let $0<p<\infty$, $l\ge 2$, and $1\leq k\leq l$. Then for any $S_n\in \mathcal{S}_{l,n}$, $n\in \N$, we have
$$
n^{-\nu }\omega_{k-\nu}(S_n^{(\nu)},n^{-1})_p\asymp
\omega_{k}(S_n,n^{-1})_p, \quad 1\leq\nu \leq\min(k,\,l-1)\,,
$$
where $\asymp$ is a two-sided inequality with positive constants depending
only on $l$ and $p$.
\end{lemma}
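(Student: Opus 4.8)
The plan is to prove the two-sided estimate $n^{-\nu}\omega_{k-\nu}(S_n^{(\nu)},n^{-1})_p\asymp\omega_k(S_n,n^{-1})_p$ by establishing the two inequalities separately. The easy direction is $\omega_k(S_n,n^{-1})_p\le Cn^{-\nu}\omega_{k-\nu}(S_n^{(\nu)},n^{-1})_p$: on each interval where $S_n$ is a polynomial of degree $l-1$, the standard identity $\Delta_h^k g(x)=\int_{[0,h]^\nu}\Delta_h^{k-\nu}g^{(\nu)}(x+u_1+\dots+u_\nu)\,{\rm d}u$ relates the $k$-th difference of $S_n$ to the $(k-\nu)$-th difference of its $\nu$-th derivative. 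One must be slightly careful because $S_n^{(\nu)}$ may have jump discontinuities at the knots when $\nu\ge m-1$, but here $\nu\le l-1$ so $S_n^{(\nu)}$ is still continuous (indeed $S_n\in C^{l-2}$), and the iterated-integral identity applies on $[0,1]$ after accounting for boundary effects near the endpoints via the definition of $\omega_k$ on $A_{kh}$. Taking $p$-th powers, using $0<p\le 1$ so that $\bigl(\int|\cdot|\bigr)^p\le\int|\cdot|^p$ only after Hölder/Jensen-type care, and integrating over $x$ gives the bound with $h\le 1/n$.

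The harder direction is $n^{-\nu}\omega_{k-\nu}(S_n^{(\nu)},n^{-1})_p\le C\omega_k(S_n,n^{-1})_p$, i.e. controlling a derivative by the function. The key is a Bernstein-type (Markov-type) inequality for splines in $L_p[0,1]$, $0<p<1$: for $S_n\in\mathcal{S}_{l,n}$ one has $\|S_n^{(j)}\|_{L_p(t_{i-1},t_i)}\le Cn^j\|S_n\|_{L_p(t_{i-1},t_i')}$ on a slightly enlarged interval, or in global form $n^{-j}\|S_n^{(j)}\|_p\le C\omega_j(S_n,n^{-1})_p$; such inequalities are exactly the content of the references \cite{Kop06} and \cite{HuYu}. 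Granting the global Markov-type estimate $\|S_n^{(\nu)}\|_p\le Cn^\nu\omega_\nu(S_n,1/n)_p$ and, more generally, $\omega_{k-\nu}(S_n^{(\nu)},1/n)_p\le Cn^\nu\omega_k(S_n,1/n)_p$, the claim follows directly. Alternatively, one can argue locally: write $\Delta_h^{k-\nu}S_n^{(\nu)}(x)$, apply the spline Bernstein inequality on each mesh interval to pull out $\nu$ derivatives at the cost of $n^\nu$, reassemble $\Delta_h^kS_n$, and sum the $p$-th powers over the $O(n)$ intervals; the overlaps between enlarged intervals contribute only a bounded multiplicative constant depending on $l$.

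I expect the main obstacle to be the harder (Bernstein) direction, specifically making the local-to-global passage rigorous in $L_p$ with $0<p<1$, where the quasi-norm is only $p$-subadditive and one cannot use duality or the triangle inequality freely. The cleanest route is to cite the spline Markov/Bernstein inequalities from \cite{Kop06} and \cite{HuYu} in the precise form $n^{-j}\omega_{k-j}(S_n^{(j)},1/n)_p\asymp\omega_k(S_n,1/n)_p$ for $j=1$ and then iterate $\nu$ times, at each step using that $S_n^{(j)}\in\mathcal{S}_{l-j,n}$ still has enough smoothness ($l-j\ge l-(l-1)=1$, and one needs $l-j\ge 2$ to iterate, which holds while $j\le l-2$; the extremal case $\nu=l-1$ is handled by one final application of the $j=1$ step to $S_n^{(l-2)}\in\mathcal{S}_{2,n}$). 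Care must be taken that $k-\nu\ge 1$ so that the modulus on the left is a genuine modulus of smoothness, which is guaranteed by the hypothesis $1\le\nu\le\min(k,l-1)$.
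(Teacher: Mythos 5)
There is a genuine gap, and it sits in the part you call the ``easy direction.'' For $0<p<1$ the passage from the identity
$\D_h^kS_n(x)=\int_{[0,h]^\nu}\D_h^{k-\nu}S_n^{(\nu)}(x+u_1+\dots+u_\nu)\,{\rm d}u$
to an $L_p$ bound requires Minkowski's integral inequality, which is unavailable in the quasi-norm setting, and the substitute you invoke, $\bigl(\int|\cdot|\bigr)^p\le\int|\cdot|^p$, is simply false for Lebesgue integrals (it holds for finite sums; for integrals the concavity of $t\mapsto t^p$ gives the \emph{reverse} bound $\int_0^h|g|^p\,{\rm d}u\le h^{1-p}\bigl(\int_0^h|g|\,{\rm d}u\bigr)^p$). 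This is not a repairable technicality by ``H\"older/Jensen-type care'': if the integral-identity argument worked for $p<1$ it would prove $\w_k(f,\d)_p\le C\d^{\nu}\w_{k-\nu}(f^{(\nu)},\d)_p$ for all smooth $f$, which the paper emphasizes is false (see the discussion around \cite{PePo} and \cite{Kop06}, and Proposition~\ref{pr.Sec2.1}). For splines this direction survives only because of the piecewise-polynomial structure on a uniform mesh: one argues through local norm equivalences on the knot intervals or through the discrete characterization of the modulus in terms of the truncated-power coefficients, $\w_k(S_n,n^{-1})_p^p\asymp n^{-(1+(k-1)p)}\sum_j|a_j|^p$ (used in the paper as \eqref{eq.lem.th2.6}); that is exactly the content of \cite{HuYu} and \cite{Kop06}, and your plan supplies none of it for this direction.

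For the Bernstein direction your ``cleanest route'' is to cite the $j=1$ case of the very equivalence being proved from \cite{Kop06} and \cite{HuYu} and iterate; that is legitimate as a reduction, but it is not a proof --- it is what the paper itself does, since Lemma~\ref{lemNSS} is stated there without proof, as a quotation of those two references. So the net content of your proposal is: cite the references for one direction, and an elementary argument for the other direction that fails precisely at the $0<p<1$ step. To turn this into an actual proof you would need the local machinery (Markov inequality on each knot interval, norm equivalence for polynomials of degree at most $l-1$ on intervals of length $1/n$, and the coefficient characterization of $\w_k(S_n,n^{-1})_p$), applied in both directions, with the summation over the $O(n)$ intervals done at the level of $p$-th powers.
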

We also need the following Nikolskii type inequality for splines (see~\cite{Os80} or~\cite[Ch.~5]{DeLo}).

\begin{lemma}\label{lemNikS}
Let  $0<p\le q<\infty$ and $r\in \N$. Then for any $S_n\in \mathcal{S}_{r,n}$, $n\in \N$, we have
$$
\|S_n\|_q\leq Cn^{\frac{1}{p}-\frac{1}{q}}\|S_n\|_p\,,
$$
where $C$ is a constant depending only on $q$ and $r$.
\end{lemma}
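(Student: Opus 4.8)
The plan is to exploit the piecewise-polynomial structure of $S_n$, reducing the global inequality to a local Nikolskii inequality on a single knot interval and then summing over the $n$ pieces. Write $I_j=[t_{j-1},t_j]$, $j=1,\dots,n$, so that $|I_j|=1/n$ and $S_n$ coincides on $I_j$ with an algebraic polynomial $P_j\in\mathcal{P}_{r-1}$. Then $\Vert S_n\Vert_q^q=\sum_{j=1}^n\Vert P_j\Vert_{L_q(I_j)}^q$ and $\Vert S_n\Vert_p^p=\sum_{j=1}^n\Vert P_j\Vert_{L_p(I_j)}^p$, so it suffices to control each local $L_q$-norm by the corresponding local $L_p$-norm and then reassemble.

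For the local estimate I would prove that for every $P\in\mathcal{P}_{r-1}$ and every interval $I$,
\[
\Vert P\Vert_{L_q(I)}\le C(r)\,|I|^{\frac1q-\frac1p}\,\Vert P\Vert_{L_p(I)},
\]
obtained in two steps, both with constants independent of $p$ and $q$. First, the elementary interpolation bound $\int_I|P|^q\le\Vert P\Vert_{L_\infty(I)}^{q-p}\int_I|P|^p$ (valid since $q\ge p$) gives $\Vert P\Vert_{L_q(I)}\le\Vert P\Vert_{L_\infty(I)}^{1-p/q}\Vert P\Vert_{L_p(I)}^{p/q}$. Second, I would invoke the sup--$L_p$ Nikolskii inequality for polynomials of fixed degree, $\Vert P\Vert_{L_\infty(I)}\le C(r)\,|I|^{-1/p}\Vert P\Vert_{L_p(I)}$; this reduces, via the affine change of variable carrying $I$ onto $[0,1]$ (which produces the factor $|I|^{-1/p}$ and maps $\mathcal{P}_{r-1}$ to itself), to a Remez-type inequality on the finite-dimensional space $\mathcal{P}_{r-1}$ over $[0,1]$. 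Substituting the second bound into the first and simplifying the exponent $-(1-p/q)/p=\tfrac1q-\tfrac1p$ yields the displayed local inequality with constant $C(r)^{1-p/q}\le\max(1,C(r))$.

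Taking $|I_j|=1/n$, so that $|I_j|^{\frac1q-\frac1p}=n^{\frac1p-\frac1q}$, I would then sum:
\[
\Vert S_n\Vert_q^q=\sum_{j=1}^n\Vert P_j\Vert_{L_q(I_j)}^q\le C(r)^q\,n^{(\frac1p-\frac1q)q}\sum_{j=1}^n\Vert P_j\Vert_{L_p(I_j)}^q.
\]
Since $q\ge p$, the exponent $s:=q/p\ge1$, and the superadditivity of $t\mapsto t^{s}$ gives $\sum_j a_j^{s}\le(\sum_j a_j)^{s}$ for $a_j\ge0$; applying this with $a_j=\Vert P_j\Vert_{L_p(I_j)}^p$ bounds $\sum_j\Vert P_j\Vert_{L_p(I_j)}^q$ by $(\sum_j\Vert P_j\Vert_{L_p(I_j)}^p)^{q/p}=\Vert S_n\Vert_p^q$. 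Taking $q$-th roots gives $\Vert S_n\Vert_q\le C(r)\,n^{\frac1p-\frac1q}\Vert S_n\Vert_p$, the claim, with a constant depending only on $q$ and $r$ (in fact only on $r$).

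The summation is routine: it rests only on the additivity of the $p$-th power of the quasi-norm across the disjoint knot intervals together with the $\ell^p\hookrightarrow\ell^q$ embedding. The genuine obstacle is the local sup--$L_p$ Nikolskii inequality with a constant \emph{independent of $p$}: equivalence of norms on the finite-dimensional space $\mathcal{P}_{r-1}$ yields a constant $C(p,r)$ at once, but uniformity as $p\to0^+$ requires showing that the geometric mean $\exp(\int_0^1\log|P|)$ is bounded below on $\{P\in\mathcal{P}_{r-1}:\Vert P\Vert_{L_\infty[0,1]}=1\}$, a compactness/Remez argument controlling how small a fixed-degree polynomial can be on a large portion of its interval.
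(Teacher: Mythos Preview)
The paper does not prove this lemma; it simply quotes it from Oswald~\cite{Os80} and DeVore--Lorentz~\cite[Ch.~5]{DeLo}. Your argument---localize to each knot interval, apply the polynomial Nikol'skii inequality there, and reassemble via the embedding $\ell^p\hookrightarrow\ell^q$---is exactly the standard proof found in those references, and it is correct.

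You are also right that the only genuinely delicate point is the $p$-independence of the constant in $\Vert P\Vert_{L_\infty[0,1]}\le C(r)\Vert P\Vert_{L_p[0,1]}$: naive equivalence of norms on $\mathcal{P}_{r-1}$ gives only $C(p,r)$, which blows up as $p\to0^+$. Your indicated Remez/compactness route works; one clean execution is to use the Remez inequality to show that $|\{x\in[0,1]:|P(x)|<t\,\Vert P\Vert_\infty\}|\le c_r\,t^{1/(r-1)}$ for $t\in(0,1)$, and then integrate the distribution function of $-\log|P|$ to obtain $\int_0^1\log(\Vert P\Vert_\infty/|P|)\,{\rm d}x\le M_r$, which is precisely the uniform lower bound on $\exp\bigl(\int_0^1\log|P|\bigr)$ that you need.
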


\begin{proof}\emph{of\,\, Theorem~\ref{th1S}\,\,\,}
To prove the theorem one can use Lemmas~\ref{lemJS}--\ref{lemNikS} and repeat the scheme of proving
Theorem~\ref{thsec3.1}. We only note that the prove of
Theorem~\ref{th1S} is slightly simpler than the proof of
Theorem~\ref{thsec3.1}, since there is no need to perform
additional technical steps concerning using the function
$\tau_u$ and the polynomials $T_n$ as in the proof of
Theorem~\ref{thsec3.1}. See, for example, the proof of Theorem~\ref{th3}, in which a similar situation is considered.
\end{proof}

Now we are going to obtain an analog of inequality
(\ref{eqthsec3.A.2}) for the error $\E_{m,n}(f)_p$.
For this purpose, we need the notion of a derivative in the sense of $L_p[0,1]$.
By analogy with the corresponding definition~\eqref{senseDerT}, we define the derivative of $f\in L_p[0,1]$ as a function
$g$ satisfying
\begin{equation}\label{eqProizvLp}
    \bigg\Vert \frac{\D_h^k f}{h^k}-g\bigg\Vert_{L_p[0,\,1-kh]}\to
    0\quad\text{as}\quad h\to 0+\,.
\end{equation}
In this case, we write $g=f^{(k)}$.

\begin{theorem}\label{th2S}
Let $f\in L_p[0,1]$, $0<p<1$, $m\in\N$, and let for some $k\in \N$, $k<m$, one has
\begin{equation}\label{eqth2S.1}
\sum\limits_{\nu=1}^\infty \nu^{kp-1}\E_{m,\nu}(f)_p^p<\infty\,.
\end{equation}
Then $f$ has the derivative $f^{(k)}$ in the sense of $L_p[0,1]$ and for any $n\in\N$
\begin{equation}\label{eqth2S.2}
\|f^{(k)}-S_n^{(k)}\|_p\leq C\(n^k
\E_{m,n}(f)_p+\(\sum\limits_{\nu=n+1}^\infty
\nu^{kp-1}\E_{m,\nu}(f)_p^p\)^{\frac{1}{p}}\)\,,
\end{equation}
where $S_n\in\mathcal{S}_{m,n}$ is such that $\|f-S_n\|_p=\E_{m,n}(f)$ and  $C$ is a constant independent of $n$ and $f$.
\end{theorem}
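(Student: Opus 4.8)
The plan is to imitate the proof of Ivanov's Theorem~\ref{thsec3.A} (the trigonometric analogue), replacing trigonometric polynomials by splines and using Lemmas~\ref{lemJS}--\ref{lemNikS} in place of Lemmas~\ref{lemJT}--\ref{lemNikT}. Fix $k<m$, assume \eqref{eqth2S.1}, and for each $n$ let $S_n\in\mathcal{S}_{m,n}$ be a best approximant, $\|f-S_n\|_p=\E_{m,n}(f)_p$. As usual it suffices to work along the dyadic scale: set $S^{(\mu)}:=S_{2^\mu}$ and consider the telescoping series $S^{(0)}+\sum_{\mu\ge0}\big(S^{(\mu+1)}-S^{(\mu)}\big)$. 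The key pointwise estimate is that each block $g_\mu:=S^{(\mu+1)}-S^{(\mu)}\in\mathcal{S}_{m,2^{\mu+1}}$ satisfies, by Lemma~\ref{lemNSS} (applied with $l=m$, differentiation order $\nu=k\le\min(k,m-1)$),
\[
\|g_\mu^{(k)}\|_p\le C\,2^{\mu k}\,\omega_k\big(g_\mu,2^{-\mu}\big)_p\le C\,2^{\mu k}\|g_\mu\|_p,
\]
the last step by \eqref{eqM1}; and $\|g_\mu\|_p^p\le\|f-S^{(\mu+1)}\|_p^p+\|f-S^{(\mu)}\|_p^p\le 2\E_{m,2^\mu}(f)_p^p$.

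First I would establish $L_p$-convergence of the derivative series. Since $\sum_\mu 2^{\mu k p}\|g_\mu\|_p^p\le C\sum_\mu 2^{\mu k p}\E_{m,2^\mu}(f)_p^p\le C\sum_\nu\nu^{kp-1}\E_{m,\nu}(f)_p^p<\infty$ by \eqref{eqth2S.1} and the standard dyadic comparison, the series $\sum_\mu g_\mu^{(k)}$ converges in $L_p[0,1]$ to some $G$; call $h=S^{(0){(k)}}+\sum g_\mu^{(k)}$. One must then check that $h$ is genuinely $f^{(k)}$ in the sense \eqref{eqProizvLp}: apply $\D_t^k/t^k$ to the (already $L_p$-convergent) series $f=S^{(0)}+\sum g_\mu$, use that $\D_t^k$ commutes with differentiation of splines and the Stechkin--Nikolskii-type equivalence $\|\D_t^k g_\mu\|_p\asymp t^k\|g_\mu^{(k)}\|_p$ for $0<t\le 2^{-\mu}$ (Lemma~\ref{lemNSS} again, or rather the version for difference operators it implies), split the sum at the index where $2^{-\mu}\sim t$, and let $t\to0+$; the tail is controlled by $\sum_{2^{-\mu}<t}2^{\mu kp}\|g_\mu\|_p^p\to0$ and the head by $t^p\sum_{2^{-\mu}\ge t}2^{(\mu+1)kp}\|g_\mu\|_p^p$, both vanishing by the convergence of the weighted series. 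This identifies $f^{(k)}=h$.

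For the quantitative bound \eqref{eqth2S.2}, fix $n$ and pick $N$ with $2^{N-1}\le n<2^N$; using $p<1$ and the triangle inequality for $\|\cdot\|_p^p$,
\[
\|f^{(k)}-S_n^{(k)}\|_p^p\le\|S^{(N){(k)}}-S_n^{(k)}\|_p^p+\sum_{\mu\ge N}\|g_\mu^{(k)}\|_p^p.
\]
The difference $S^{(N)}-S_n$ lies in $\mathcal{S}_{m,2^N}$ and has $L_p$-norm $\le C\E_{m,n}(f)_p$ (both $S^{(N)}$ and $S_n$ are within that error of $f$), so by Lemma~\ref{lemNSS} its $k$th derivative has norm $\le C\,2^{Nk}\E_{m,n}(f)_p\le C\,n^k\E_{m,n}(f)_p$, giving the first term. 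For the tail, the block estimates above yield $\sum_{\mu\ge N}\|g_\mu^{(k)}\|_p^p\le C\sum_{\mu\ge N}2^{\mu kp}\E_{m,2^\mu}(f)_p^p\le C\sum_{\nu> n}\nu^{kp-1}\E_{m,\nu}(f)_p^p$ by the dyadic comparison of sums. Adding the two pieces and taking $p$th roots gives \eqref{eqth2S.2}.

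The main obstacle is the rigorous verification that the constructed function $h$ is the derivative in the precise sense \eqref{eqProizvLp}, i.e.\ controlling $\|\D_t^k f/t^k-h\|_{L_p[0,1-kt]}$ uniformly as $t\to0$; here one needs the spline Stechkin--Nikolskii equivalence to transfer between differences and derivatives block by block, and care with the truncation near the interval's endpoint (the domain $[0,1-kt]$ shrinks with $t$). Everything else is a routine transcription of the trigonometric argument, with Lemmas~\ref{lemJS}--\ref{lemNikS} playing the roles of Lemmas~\ref{lemJT}--\ref{lemNikT}; in particular no analogue of the auxiliary polynomials $T_n$ or the functions $\tau_u$ from the proof of Theorem~\ref{thsec3.1} is needed, since spline spaces are not shift-invariant and one argues directly on the dyadic blocks.
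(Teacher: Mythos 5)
Your quantitative skeleton is essentially the paper's own argument: the paper also telescopes over the dyadic best approximants $S_{2^\nu}$, bounds $\Vert S_{2^{\nu+1}}^{(k)}-S_{2^\nu}^{(k)}\Vert_p$ and $\Vert S_{2^N}^{(k)}-S_n^{(k)}\Vert_p$ by a Markov-type inequality for splines (Lemma~\ref{lemMarkS}; your route via Lemma~\ref{lemNSS} and \eqref{eqM1} is equivalent), uses completeness of $L_p[0,1]$ to produce the limit $g$ of $S_{2^\nu}^{(k)}$, and indeed needs no analogue of the $T_n$, $\tau_u$ device from Theorem~\ref{thsec3.1}; your final assembly of \eqref{eqth2S.2} matches the paper's. (One small shared technicality: $S_{2^N}-S_n$ is generally \emph{not} in $\mathcal{S}_{m,2^N}$, since the knots $j/n$ need not lie on the dyadic grid; the paper applies Lemma~\ref{lemMarkS} to the same difference, so this is fixable but not by the literal statement you quote.) The step where the paper invests all the real work is the identification of $g$ with $f^{(k)}$ in the sense \eqref{eqProizvLp}: this is isolated as Lemma~\ref{lem.th2}, whose proof uses the truncated-power representation $S_n=P+\sum_j a_j(\cdot-t_j)_+^{m-1}$, the discrete characterization \eqref{eq.lem.th2.6} of $\w_m(S_n,n^{-1})_p$, and a balanced choice $h=\e^{1/2k}n^{-1}$ of the difference step against the knot spacing.

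That identification is exactly where your proposal has a genuine gap. Lemma~\ref{lemNSS} compares the modulus with the derivative only at the scale of the knot spacing, $\d=1/n$; it does not supply the ``difference version at all scales'' you invoke, i.e.\ neither $\Vert\D_t^k g_\mu\Vert_p\le Ct^k\Vert g_\mu^{(k)}\Vert_p$ nor any smallness of $\Vert\D_t^k g_\mu/t^k-g_\mu^{(k)}\Vert_p$ for $t$ below the knot spacing $2^{-\mu-1}$ with constants independent of $\mu$ and $t$ --- and for $0<p<1$ such bounds are precisely the delicate point. Moreover, your head estimate is false as stated: for a single hat function $g\in\Sp_{2,n}$ (so $m=2$, $k=1$, knot spacing $1/n$, $\Vert g\Vert_p^p\asymp 1/n$) one has $\Vert\D_t^1 g/t-g'\Vert_p^p\asymp t\,n^{p}$ for $t\le 1/n$, whereas your majorant is $\asymp t^p n^{p}\Vert g\Vert_p^p\asymp t^pn^{p-1}$; at $t\asymp 1/n$ these differ by the unbounded factor $n^{p}$. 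The correct per-block bound carries a positive power of the normalized quantity $t2^{\mu}$ rather than of $t$ alone, and with it (or with a dominated-convergence argument based on the uniform inequality $\Vert\D_t^k g_\mu\Vert_p\le Ct^k\Vert g_\mu^{(k)}\Vert_p$, $t\le 2^{-\mu}$) your scheme can be repaired --- but proving these uniform spline inequalities requires exactly the near-knot analysis via truncated powers and \eqref{eq.lem.th2.6} that constitutes the paper's Lemma~\ref{lem.th2} (cf.~\cite{HuLi05}). So the step you yourself flag as ``the main obstacle'' is not a routine consequence of Lemma~\ref{lemNSS}; it is the missing proof, and in the paper it is supplied by Lemma~\ref{lem.th2}, whose balancing of the step $h$ against $n$ replaces your blockwise limit $t\to0$.
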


Surprisingly Theorem~\ref{th2S} as well as Theorem~\ref{thModFrD}
below are new.
To prove Theorem~\ref{th2S}, we need the following two auxiliary results.
The first one is an analog of Markov's inequality (see \cite[p.~136]{DeLo}).
\begin{lemma}\label{lemMarkS}
Let $0<p<\infty$, $m,r\in \N$, and  $r< m$. Then for any
$S_n\in\mathcal{S}_{m,n}$, $n\in\N$,
\begin{equation*}
%\label{eqprthModFrD.5.5}
\Vert S_n^{(r)}\Vert_p\leq C n^{r}\Vert S_n\Vert_p,
\end{equation*}
where $C$ is a constant independent of $S_n$.
\end{lemma}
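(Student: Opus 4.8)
The plan is to reduce the global estimate to the classical Markov inequality on each of the $n$ knot-intervals and then sum the $p$-th powers. First I would fix a spline $S_n\in\mathcal{S}_{m,n}$ and observe that, since $r<m$ (so $r\le m-1$), on every open interval $I_j=(t_{j-1},t_j)$ of length $h=1/n$ the restriction of $S_n$ is an algebraic polynomial of degree at most $m-1$; hence $S_n^{(r)}$ is well defined a.e.\ on $[0,1]$ and
$$
\Vert S_n^{(r)}\Vert_p^p=\sum_{j=1}^n\Vert S_n^{(r)}\Vert_{L_p(I_j)}^p .
$$

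Next I would establish the local Markov-type inequality in $L_p$: for every $P\in\mathcal{P}_{m-1}$ and every interval $I$ of length $h$,
$$
\Vert P^{(r)}\Vert_{L_p(I)}\le C(m,p,r)\,h^{-r}\Vert P\Vert_{L_p(I)} .
$$
For $I=[0,1]$ this is immediate, because $P\mapsto P^{(r)}$ is a linear map on the finite-dimensional space $\mathcal{P}_{m-1}$, on which all (quasi-)norms are equivalent; the general case follows by the affine substitution $x\mapsto(x-a)/h$, which multiplies the $L_p$-quasi-norm of a polynomial on $I$ by $h^{1/p}$ and the $L_p$-quasi-norm of its $r$-th derivative by $h^{1/p-r}$, so the scaling constant depends only on $m,p,r$.

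Applying this with $h=1/n$ on each $I_j$ and summing then gives
$$
\Vert S_n^{(r)}\Vert_p^p=\sum_{j=1}^n\Vert S_n^{(r)}\Vert_{L_p(I_j)}^p\le C\,n^{rp}\sum_{j=1}^n\Vert S_n\Vert_{L_p(I_j)}^p=C\,n^{rp}\Vert S_n\Vert_p^p ,
$$
and taking $p$-th roots yields the assertion with $C=C(m,p,r)$ independent of $S_n$ and $n$. There is no genuine obstacle in this argument; the only points requiring a little care are that it must be carried out piecewise (since $S_n^{(r)}$ is in general discontinuous at the knots, e.g.\ when $r=m-1$), that the equality $\Vert f\Vert_p^p=\sum_j\Vert f\Vert_{L_p(I_j)}^p$ is just additivity of the integral and is valid for all $0<p<\infty$, and that the constant in the local Markov inequality is uniform over all intervals of a given length.
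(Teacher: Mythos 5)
Your argument is correct: the piecewise reduction to a local Markov inequality on each knot interval (obtained by quasi-norm equivalence on the finite-dimensional space $\mathcal{P}_{m-1}$ plus affine rescaling), followed by summing $p$-th powers, is exactly the standard route, and all steps — including the validity of the norm-equivalence argument for $0<p<1$ and the piecewise interpretation of $S_n^{(r)}$ — hold as you state. The paper itself offers no proof but only cites the polynomial Markov inequality in DeVore--Lorentz, so your write-up is essentially the argument that citation is meant to encapsulate.
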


The next auxiliary result is an analog of Theorem~2.3 from~\cite{DiTi07} in the case of approximation of functions by splines in the spaces $L_p[0,1]$, $0<p<1$.

\begin{lemma}\label{lem.th2}
Let $f\in L_p[0,1]$, $0<p<1$, $1\le k<r$. Suppose $S_n \in\mathcal{S}_{r,n}$, $n\in \N$, are such that
\begin{equation*}
%\label{eq.lem.th2.1}
  \Vert f-S_n\Vert_p=o(n^{-k})\quad\text{and}\quad \Vert g-S_n^{(k)}\Vert_p=o(1)\quad\text{as}\quad n\to\infty.
\end{equation*}
%and
%$$
%\sup_{n\in\N}  n^{k}\w_r(S_n,n^{-1})_p\le A<\infty.
%$$
Then
$f^{(k)}=g$, that is $g$ satisfies (\ref{eqProizvLp}).
\end{lemma}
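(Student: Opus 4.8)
The plan is to prove Lemma~\ref{lem.th2} by showing that $g$ is the $L_p[0,1]$-derivative of order $k$ of $f$, i.e. that $\Vert h^{-k}\D_h^k f-g\Vert_{L_p[0,1-kh]}\to0$ as $h\to0+$. The natural strategy is to insert the approximating spline $S_n$ and split
\begin{equation*}
\bigg\Vert \frac{\D_h^k f}{h^k}-g\bigg\Vert_{L_p[0,1-kh]}^p\le \bigg\Vert \frac{\D_h^k (f-S_n)}{h^k}\bigg\Vert_{L_p[0,1-kh]}^p+\bigg\Vert \frac{\D_h^k S_n}{h^k}-S_n^{(k)}\bigg\Vert_{L_p[0,1-kh]}^p+\Vert S_n^{(k)}-g\Vert_{p}^p,
\end{equation*}
using the $p$-subadditivity of $\Vert\cdot\Vert_p^p$ for $0<p<1$. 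For a fixed $h$, I would choose $n=n(h)$ comparable to $1/h$ (say $1/n<h\le 2/n$ or $n=\lfloor 1/h\rfloor$), so that all three terms can be driven to zero as $h\to0+$. The third term tends to $0$ by hypothesis $\Vert g-S_n^{(k)}\Vert_p=o(1)$.

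For the middle term, since $S_n\in\mathcal{S}_{r,n}$ is a fixed spline and $h\le 2/n$, one has $h^{-k}\D_h^k S_n\to S_n^{(k)}$ pointwise (away from the knots) and in $L_p$; more precisely this is exactly the kind of Stechkin--Nikolskii/Marchaud behaviour quantified by Lemma~\ref{lemNSS} (or Lemma~\ref{lemMarkS}), which gives $\Vert h^{-k}\D_h^k S_n-S_n^{(k)}\Vert_p\le C$ and, after a more careful estimate using $\D_h^k S_n(x)=\int_{[0,h]^k}S_n^{(k)}(x+u_1+\dots+u_k)\,du$ on intervals not meeting the knots, a bound of order $O(nh)\cdot\omega_1(S_n^{(k)},h)_p+$ (contribution from the $O(n)$ knot-neighbourhoods of length $O(h)$). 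Choosing $n\asymp 1/h$ makes the knot-contribution $O((nh)^{1/p})=O(1)$ but \emph{not} $o(1)$, so one must be slightly more economical: it suffices that for each \emph{fixed} $n$, the middle term $\to0$ as $h\to0+$, and then a diagonal/$\limsup$ argument over $n\to\infty$ finishes the proof. Concretely: fix $\e>0$; pick $n$ so large that the first and third terms are $<\e$ for all small $h$ (first term because $\Vert f-S_n\Vert_p=o(n^{-k})$ and $\Vert h^{-k}\D_h^k(f-S_n)\Vert_p\le Ch^{-k}\Vert f-S_n\Vert_p\le Cn^k\Vert f-S_n\Vert_p\cdot(nh)^{-k}$, wait—this needs $h\gtrsim 1/n$), then with that $n$ fixed let $h\to0$ so the middle term vanishes.

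The first (error) term is the delicate one: $\Vert h^{-k}\D_h^k(f-S_n)\Vert_{L_p[0,1-kh]}^p\le C h^{-kp}\Vert f-S_n\Vert_p^p$ crudely, but that blows up unless $h$ is bounded below in terms of $n$; the resolution is to choose $n=n(h)$ with $1/n<h$, so $h^{-kp}\Vert f-S_{n}\Vert_p^p\le n^{kp}\Vert f-S_n\Vert_p^p=o(1)$ by the hypothesis $\Vert f-S_n\Vert_p=o(n^{-k})$. With $n=n(h)\to\infty$ as $h\to0+$ and $1/n(h)<h\le 2/n(h)$, the first term is $o(1)$, the middle term is $o(1)$ for this coupled choice (using $h\asymp 1/n$ together with the spline estimate $\Vert h^{-k}\D_h^k S_n - S_n^{(k)}\Vert_p = o(1)$ which one must verify — this is where Lemma~\ref{lemNSS} is used, noting $\omega_{k}(S_n,1/n)_p\asymp n^{-k}\Vert S_n^{(k)}\Vert_p$ and $\Vert h^{-k}\D_h^kS_n\Vert_p\le C\omega_k(S_n,h)_p h^{-k}$; the $o(1)$ comes from the uniform continuity of translation on the fixed-but-growing family, which in fact does require care), and the third term is $o(1)$ by hypothesis.

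The main obstacle is precisely making the middle term $o(1)$ when $n=n(h)\to\infty$ simultaneously with $h\to0$: a single spline estimate with a fixed constant is not enough, since $S_n$ itself varies. The clean way around this is \emph{not} to couple $n$ and $h$ at all for the middle term, but to use a two-stage $\e$-argument: given $\e>0$, first fix $N$ large enough (depending on $\e$) that $\Vert g-S_N^{(k)}\Vert_p^p<\e$ and $n^{kp}\Vert f-S_n\Vert_p^p<\e$ for all $n\ge N$; then, since $S_N$ is now a \emph{fixed} spline, $\Vert h^{-k}\D_h^k S_N-S_N^{(k)}\Vert_{L_p[0,1-kh]}\to0$ as $h\to0+$ (elementary: $S_N^{(k)}\in L_p$ is a bounded piecewise polynomial, translations are $L_p$-continuous, and the $k$ knot-neighbourhoods shrink). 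Hence for $h$ small, writing the split above with $n=N$ gives $\Vert h^{-k}\D_h^kf-g\Vert_{L_p[0,1-kh]}^p\le C h^{-kp}\Vert f-S_N\Vert_p^p+\e+\e$. This still has the bad factor $h^{-kp}$ with $N$ fixed — so in the end the coupling \emph{is} needed for the error term, and one cannot avoid choosing $n(h)\asymp 1/h$; the correct bookkeeping is therefore: for the error term use $n=n(h)\asymp1/h$ and hypothesis $\Vert f-S_n\Vert_p=o(n^{-k})$; for the middle term use, with the same coupled $n=n(h)$, the spline transition estimate from Lemma~\ref{lemNSS}/Lemma~\ref{lemMarkS} which yields $\Vert h^{-k}\D_h^kS_{n}-S_{n}^{(k)}\Vert_p\le C\,\omega_1(S_{n}^{(k)},h)_p + C(\text{knot terms})$, and observe that $\omega_1(S_n^{(k)},h)_p\to0$ because $S_n^{(k)}\to g$ in $L_p$ and moduli of smoothness are equicontinuous on $L_p$-convergent sequences (uniform smallness of $\omega_1(\cdot,h)_p$ over a compact set); the knot terms are controlled exactly as in the proof of Lemma~\ref{lemNSS}. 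I expect this equicontinuity-of-moduli step to be the real crux, and it is handled by: $\omega_1(S_n^{(k)},h)_p\le C(\omega_1(g,h)_p+\Vert g-S_n^{(k)}\Vert_p)$ via \eqref{eqM0}, so it is $o(1)$ uniformly once $\Vert g-S_n^{(k)}\Vert_p\to0$. This closes the argument and shows $f^{(k)}=g$ in the sense of \eqref{eqProizvLp}.
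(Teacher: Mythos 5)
Your skeleton coincides with the paper's: insert $S_n$, split into the three terms $J_1$ (the difference applied to $f-S_n$), $J_2$ ($h^{-k}\D_h^k S_n-S_n^{(k)}$) and $J_3$ ($g-S_n^{(k)}$), couple $n$ with $h$, and kill $J_1$ and $J_3$ by the two hypotheses; those two terms you handle correctly. The genuine gap is $J_2$, which is exactly where the paper spends almost all of its effort. You assert $\Vert h^{-k}\D_h^kS_n-S_n^{(k)}\Vert_p\le C\w_1(S_n^{(k)},h)_p+C(\text{knot terms})$, justifying the off-knot part by the representation $\D_h^kS_n(x)=\int_{[0,h]^k}S_n^{(k)}(x+u_1+\dots+u_k)\,{\rm d}u$; but passing from that representation to a bound by $\w_1(S_n^{(k)},h)_p$ requires the integral Minkowski inequality, which fails for $0<p<1$ --- precisely the regime of the lemma. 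Nor does Lemma~\ref{lemNSS} supply the needed estimate: it compares moduli of a fixed spline at the single scale $1/n$ and contains no transition inequality between $h^{-k}\D_h^kS_n$ and $S_n^{(k)}$. The knot contribution is likewise only waved at (``controlled exactly as in the proof of Lemma~\ref{lemNSS}''), and your own intermediate estimate $O((nh)^{1/p})=O(1)$ shows you had not actually obtained smallness there. Since $S_n$ varies with $h$ under your coupling $h\asymp 1/n$, uniformity in $n$ of the $J_2$-estimate is the whole point, and it is left unproved.

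The paper closes this gap by working directly with the spline structure and, crucially, by a different coupling that lowers the bar: it takes $h=\e^{1/2k}n^{-1}$, i.e.\ $h$ smaller than $1/n$ by an $\e$-dependent factor. Writing $S_n=P+\sum_j a_j(x-t_j)_+^{r-1}$ as in \eqref{eq.lem.th2.4}, using the exact formula for $\D_h^k$ of a polynomial from \cite{HuLi05} on the pieces, the discrete characterization \eqref{eq.lem.th2.6} of $\w_r(S_n,n^{-1})_p$ through $\sum_j|a_j|^p$ at the knots, and Markov's inequality (Lemma~\ref{lemMarkS}) together with Lemma~\ref{lemNSS}, it arrives at $J_2\le C\e^{p/2}\(\Vert g\Vert_p^p+\e^p+\Vert f\Vert_p^p\)$: the extra factor $\e^{p/2}$ extracted from $h\ll 1/n$ means only \emph{boundedness} of the spline quantities is needed, so no equicontinuity of moduli along the varying sequence $S_n^{(k)}$ is ever invoked. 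Your variant (strict coupling $h\asymp 1/n$ plus $\w_1(S_n^{(k)},h)_p\to 0$ via \eqref{eqM0}--\eqref{eqM1}) could probably be pushed through, but only after the same piecewise-polynomial analysis, converting both the piece and the knot contributions into moduli of $S_n^{(k)}$ at scale $1/n$ via \eqref{eq.lem.th2.6} and Lemma~\ref{lemNSS}; as written, that central step is missing.
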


\begin{proof}
For any sufficiently small $\e>0$, we choose $n_0=n_0(\e)$ such that for $n\geq n_0$
\begin{equation}\label{eq.lem.th2.2}
  \Vert f-S_n\Vert_p\leq\e n^{-k}\quad\text{and}\quad \Vert g-S_n^{(k)}\Vert_p\leq\e.
\end{equation}
Let $h=\e^{1/2k}n^{-1}$.
We have
\begin{equation}\label{eq.lem.th2.3}
\begin{split}
\bigg\Vert \frac{\D_h^k f}{h^k}-g\bigg\Vert_{L_p[0,\,1-kh]}^p&\leq \bigg\Vert \frac{\D_h^k (f-S_n)}{h^k}\bigg\Vert_{L_p[0,\,1-kh]}^p\\
&+
\bigg\Vert \frac{\D_h^k S_n}{h^k}-S_n^{(k)}\bigg\Vert_{L_p[0,\,1-kh]}^p+\Vert g-S_n^{(k)}\Vert_{p}^p\\
&=J_1+J_2+J_3\,.
\end{split}
\end{equation}
By (\ref{eq.lem.th2.2}), we get
\begin{equation}\label{eq.lem.th2.3.5}
J_1\leq2^kh^{-kp}\Vert f-S_n\Vert_{p}^p\leq 2^k\e^{p/2}\quad \textrm{{and}} \quad J_3\leq\e^p\,.
\end{equation}
To estimate $J_2$, we use the following representing formula for a spline $S_n\in \mathcal{S}_{r,n}$
\begin{equation}\label{eq.lem.th2.4}
S_n(x)=P(x)+\sum_{j=1}^{n-1}a_j(x-t_j)_+^{r-1}\,,
\end{equation}
where $P\in \mathcal{P}_{r-1}$, $x_+=x$ if $x\geq0$ and $x_+=0$ if $x<0$ (see \cite{Os80}).
Recall also that
\begin{equation}\label{eq.lem.th2.6}
\w_r(S_n,\,n^{-1})_p^p\asymp
n^{-(1+(r-1)p)}\sum\limits_{j=1}^{n-1}|a_j|^p\,,
\end{equation}
where $\asymp$ is a two-sided inequality with positive constants independent of $S_n$.
Equivalence (\ref{eq.lem.th2.6}) follows from Lemma~2.1 in
\cite{HuYu}, which was
proved only in the case $1\le p<\infty$. It is easy to verify that this lemma is valid in the case $0<p<1$, too.

Now, using (\ref{eq.lem.th2.4}), we get for some fixed
$j\in[1,n-1]$ that
\begin{equation}\label{eq.lem.th2.7}
S_n(x)=\left\{
                   \begin{array}{ll}
                     \displaystyle P_{j}(x), & \hbox{$x\in(t_{j-1},\,t_{j}]$,} \\
                     \displaystyle a_{j}(x-t_{j})^{r-1}+P_{j}(x), & \hbox{$x\in(t_{j},\,t_{j+1})$,}
                   \end{array}
                 \right.
\end{equation}
where $P_{j}(x)\in\mathcal{P}_{r-1}\,.$
%and $x\in(t_{j-1},\,t_{j})$ that
%\begin{equation}\label{eq.lem.th2.7}
%S_n(x)=P(x)+\sum\limits_{j=1}^{2^{j-1}}a_j(x-t_j)^{r-1}=P_{j}(x)\in\mathcal{P}_{r-1}\,.
%\end{equation}
%At the same time for $x\in(t_{j},\,t_{j+1})$ we get
%\begin{equation}\label{eq.lem.th2.8}
%S_n(x)=P_{j}(x)+a_{j}(x-t_{j})^{r-1}\,.
%\end{equation}
Hence, for some $l\in[1,k]$ and
$x\in(t_{j}-lh,\,t_{j}-(l-1)h)$, we obtain
\begin{equation}\label{eq.lem.th2.9}
\begin{split}
\D_h^k S_n(x)&=a_{j}\D_h^k(x-t_{j})_+^{r-1}+\D_h^kP_{j}(x)\\
&=a_{j}\sum_{s=l}^k\binom{k}{s}(-1)^s(x+sh-t_{j})^{r-1}+\D_h^kP_{j}(x)\,.
\end{split}
\end{equation}
Next, taking into account (\ref{eq.lem.th2.9}) and (\ref{eq.lem.th2.7}), we derive
\begin{equation}\label{eq.lem.th2.10}
\begin{split}
J_2&=\sum\limits_{j=1}^{{n}-1}\left\{\int_{t_{j-1}}^{t_j-rh}+\sum_{l=1}^r\int_{t_j-lh}^{t_j-(l-1)h}\right\}\left|\frac{\D_h^k S_n(x)}{h^k}-S_n^{(k)}(x)\right|^p{\rm d}x\\
&=\sum\limits_{j=1}^{{n}-1}\(I_{0,j}+\sum_{l=1}^rI_{l,j}\)\,.
\end{split}
\end{equation}
By (\ref{eq.lem.th2.7}), for $I_{0,j}$, $j\in [1,n-1]$, we have
$$
I_{0,j}=\bigg\Vert \frac{\D_h^k P_j}{h^k}-P_j^{(k)}\bigg\Vert_{L_p[t_{j-1},\,t_{j}-rh]}^p\,.
$$
By formula (2.6) in \cite{HuLi05}, we get
$$
\D_h^k P_j(x)=\sum_{\nu=0}^{r-1-k}\frac{P_j^{(k+\nu)}(x)}{\nu!}h^{k+\nu}\xi_{k+\nu}^\nu,\quad 0<\xi_{k+\nu}<k\,.
$$
Therefore,
\begin{equation}\label{eq.lem.th2.11}
\begin{split}
I_{0,j}&=\bigg\Vert \sum_{\nu=1}^{r-1-k}\frac{P_j^{(k+\nu)}}{\nu!}h^{k+\nu}\xi_{k+\nu}^\nu\bigg\Vert_{L_p[t_{j-1},\,t_{j}-rh]}^p\\
&\leq C\sum_{\nu=1}^{r-1-k}h^{(k+\nu)p}\|S_n^{(k+\nu)}\|_{L_p[t_{j-1},\,t_{j}]}^p\\
&\leq C \e^{p/2}\sum_{\nu=1}^{r-1-k}n^{-(k+\nu)p}\|S_n^{(k+\nu)}\|_{L_p[t_{j-1},\,t_{j}]}^p\,.
\end{split}
\end{equation}

Now, let us consider $I_{l,j}$ for $j\in[1,n-1]$ and $l\in[1,r]$. By  (\ref{eq.lem.th2.9}) and (\ref{eq.lem.th2.7}), we have
\begin{equation}\label{eq.lem.th2.12}
\begin{split}
I_{l,j}&=\bigg\Vert a_jh^{-k}\D_h^k(\cdot-t_j)_+^{r-1}+\frac{\D_h^kP_j}{h^k}-P_j^{(k)}\bigg\Vert_{L_p[t_{j}-lh,\,t_{j}-(l-1)h]}^p\\
&\leq
|a_j|^ph^{-kp}\int_{t_j-lh}^{t_j-(l-1)h}|\D_h^k(x-t_j)_+^{r-1}|^p{\rm d}x\\
&\quad\quad\quad\quad\quad\quad\quad\quad\quad+\bigg\Vert
\frac{\D_h^k
P_j}{h^k}-P_j^{(k)}\bigg\Vert_{L_p[t_{j}-lh,\,t_{j}-(l-1)h]}^p\,.
\end{split}
\end{equation}
Applying the same estimates as for $I_{0,j}$ in~\eqref{eq.lem.th2.11} to the second summand in (\ref{eq.lem.th2.12}) and
taking into account that
$$
\int_{t_j-lh}^{t_j-(l-1)h}|\D_h^k(x-t_j)_+^{r-1}|^p{\rm d}x=h^{1+(r-1)p}\int_0^1|\D_1^k(x-l)_+^{r-1}|^p{\rm d}x,
$$
we obtain
\begin{equation}\label{eq.lem.th2.13}
I_{l,j}\leq C\e^{p/2}\(n^{-(1+(r-k-1)p)}|a_j|^p+
\sum_{\nu=1}^{r-1-k}n^{-(k+\nu)p}\|S_n^{(k+\nu)}\|_{L_p[t_{j-1},\,t_{j}]}^p\)\,.
\end{equation}

Combining (\ref{eq.lem.th2.10}), (\ref{eq.lem.th2.11}), (\ref{eq.lem.th2.13}), and applying (\ref{eq.lem.th2.6}), \eqref{eqM1}, and Lemmas~\ref{lemMarkS} and~\ref{lemNSS}, we derive
\begin{equation}\label{eq.lem.th2.14}
\begin{split}
J_2&\leq C\e^{p/2}\(n^{-(1+(r-k-1)p)}\sum_{j=1}^{n-1}|a_j|^p+\sum_{\nu=1}^{r-1-k}n^{-(k+\nu)p}\|S_n^{(k+\nu)}\|_{p}^p\)\\
&\leq C\e^{p/2}\(n^{kp}\w_r(S_n,\,n^{-1})_p^p+\|S_n\|_p^p\)\\
&\leq C\e^{p/2}\(n^{kp}\w_k(S_n,\,n^{-1})_p^p+\|f-S_n\|_p^p+\|f\|_p^p\)\\
&\leq C\e^{p/2}\(\Vert S_n^{(k)}\Vert_p^p+\e^p+\|f\|_p^p\)\\
&\leq C\e^{p/2}\(\Vert g\Vert_p^p+\e^p+\|f\|_p^p\)\,.
\end{split}
\end{equation}
Finally, from (\ref{eq.lem.th2.3}), (\ref{eq.lem.th2.3.5}), and (\ref{eq.lem.th2.14}), we get
$$\bigg\Vert \frac{\D_h^k f}{h^k}-g\bigg\Vert_{L_p[0,1-kh]}^p\leq C\e^{p/2}\(\e^{p/2}+\Vert g\Vert_p^p+\|f\|_p^p\)\,.$$
Since the right-hand side of the above inequality does not depend on $S_n$, we have that $g=f^{(k)}$.

The lemma is proved.
\end{proof}

\begin{proof}\emph{of\,\, Theorem~\ref{th2S}\,\,\,}
Let $N\in \N$ be such that $2^{N-1}\le n<2^N$. Assuming for a moment
that $f^{(k)}$ exists, we get
\begin{equation}\label{eqth2S1}
  \Vert f^{(k)}-S_n^{(k)}\Vert_p^p\le \Vert f^{(k)}-S_{2^N}^{(k)}\Vert_p^p+\Vert S_{2^N}^{(k)}-S_n^{(k)}\Vert_p^p.
\end{equation}
By Lemma~\ref{lemMarkS}, we obtain
\begin{equation}\label{eqth2S2}
\begin{split}
  \Vert S_{2^N}^{(k)}-S_n^{(k)}\Vert_p^p\le C2^{kpN}  \Vert S_{2^N}-S_n\Vert_p^p
  \le Cn^{kp} \E_{m,n}(f)_p^p
\end{split}
\end{equation}
and
\begin{equation}\label{eqth2S3}
\begin{split}
\sum_{\nu=N}^\infty \Vert S_{2^{\nu+1}}^{(k)}-S_{2^\nu}^{(k)}\Vert_p^p  &\le
C \sum_{\nu=N}^\infty 2^{kp\nu} \Vert
S_{2^{\nu+1}}-S_{2^\nu} \Vert_p^p\\
&\le C \sum_{\nu=N}^\infty 2^{kp\nu}\E_{m,2^\nu}(f)_p^p\,.
\end{split}
\end{equation}
Thus, by the completeness of $L_p[0,1]$ and condition
(\ref{eqth2S.1}), there exists a function $g\in L_p[0,1]$ such that
\begin{equation}\label{eqprthModFrD.7}
\begin{split}
\Vert g-S_{2^N}^{(k)}\Vert_p=\lim_{l\to\infty}\Vert
S_{2^l}^{(k)}-S_{2^N}^{(k)}\Vert_p\le C \(\sum_{\nu=N}^\infty
2^{kp\nu}\E_{m,2^\nu}(f)_p^p\)^\frac 1p.
\end{split}
\end{equation}
In the above inequality, we use the equality $$
S_{2^l}-S_{2^N}=\sum\limits_{\nu=N}^{l-1}(S_{2^{\nu+1}}-S_{2^\nu})$$ and~\eqref{eqth2S3}. It is also easy to see that
\begin{equation}\label{eqprthModFrD.8}
\begin{split}
\Vert f-S_{2^N}\Vert_p\leq C 2^{-N k}\(2^{N
k}\E_{m,2^N}(f)_p\)=o(2^{-N k})\quad \mathrm{as}\quad N\to \infty.
\end{split}
\end{equation}
Therefore, by Lemma~\ref{lem.th2},  (\ref{eqprthModFrD.8}), and (\ref{eqprthModFrD.7}),  we obtain that $g=f^{(k)}$.
Finally, combining (\ref{eqth2S1}), (\ref{eqth2S2}), and (\ref{eqprthModFrD.7}), we get (\ref{eqth2S.2}).

Theorem~\ref{th2S} is proved.
\end{proof}

By analogy with the case of approximation of functions by trigonometric polynomials,
combining Theorem~\ref{th1S} and Theorem~\ref{th2S}, we obtain the
following result about the simultaneous approximation of a function and its derivatives by splines in
$L_p[0,1]$ for $0<p<1$ (see also Theorem~7.4 in~\cite{DeLo} for the
case $p\ge 1$).

\begin{theorem}\label{thsec3.2S}
{\it Let $0<p<1$, $r,m\in \N$, $r<m$, and let $f$ be such that
$f^{(r-1)}\in AC[0,1]$ and
\begin{equation*}
%\label{eqthsec3.2.1S}
  \sum\limits_{\nu=1}^\infty \nu^{-p}\E_{m-r,\nu}(f^{(r)})_p^p<\infty\,.
\end{equation*}
Then for any $n\in\N$ we have
\begin{equation*}
%\label{eqthsec3.2.2S}
  \|f^{(r)}-S_n^{(r)}\|_p\leq C\(\E_{m-r,n}(f^{(r)})_p+n^{1-\frac{1}{p}}
  \(\sum\limits_{\nu=n+1}^\infty \nu^{-p}\E_{m-r,\nu}(f^{(r)})_p^p\)^{\frac{1}{p}}\)\,,
\end{equation*}
where $S_n\in\mathcal{S}_{m,n}$, $n\in\N$, are such that
$\|f-S_n\|_p=\E_{m,n}(f)$ and $C$ is a constant independent of $f$ and $n$.
}
\end{theorem}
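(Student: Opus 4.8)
The plan is to deduce Theorem~\ref{thsec3.2S} from Theorem~\ref{th1S} and Theorem~\ref{th2S}, following the same scheme by which Theorem~\ref{thsec3.2} was obtained from Theorem~\ref{thsec3.1} and Theorem~\ref{thsec3.A} in the trigonometric case. The key observation is that Theorem~\ref{th2S}, applied with $k=r$ (legitimate since $r<m$), produces the derivative $f^{(r)}$ in the sense of $L_p[0,1]$ — which coincides with the ordinary derivative here, because $f^{(r-1)}\in AC[0,1]$ and $f^{(r)}\in L_p[0,1]$ by the summability hypothesis — together with inequality~\eqref{eqth2S.2}. Hence the whole task reduces to estimating the two terms on the right-hand side of~\eqref{eqth2S.2} (with $k=r$) by means of Theorem~\ref{th1S}, and to checking that the hypothesis~\eqref{eqth2S.1} of Theorem~\ref{th2S} is fulfilled.

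First I would raise the inequality of Theorem~\ref{th1S} to the $p$th power, obtaining, for every $\nu\in\N$,
$$
\E_{m,\nu}(f)_p^p\le \frac{C}{\nu^{rp}}\(\E_{m-r,\nu}(f^{(r)})_p^p+\nu^{p-1}\sum_{\mu=\nu+1}^\infty\mu^{-p}\E_{m-r,\mu}(f^{(r)})_p^p\),
$$
so that $\nu^{rp-1}\E_{m,\nu}(f)_p^p\le C\(\nu^{-1}\E_{m-r,\nu}(f^{(r)})_p^p+\nu^{p-2}\sum_{\mu=\nu+1}^\infty\mu^{-p}\E_{m-r,\mu}(f^{(r)})_p^p\)$. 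Summing this over $\nu\ge n+1$, using for the first term that $\nu^{-1}=\nu^{p-1}\nu^{-p}\le n^{p-1}\nu^{-p}$ (here $p-1<0$), and for the second term interchanging the order of summation and using $\sum_{\nu=n+1}^\infty\nu^{p-2}\asymp n^{p-1}$ (here $p-2<-1$), I arrive at
$$
\sum_{\nu=n+1}^\infty\nu^{rp-1}\E_{m,\nu}(f)_p^p\le Cn^{p-1}\sum_{\nu=n+1}^\infty\nu^{-p}\E_{m-r,\nu}(f^{(r)})_p^p.
$$
Applying this with $n=1$ and estimating the finitely many omitted terms trivially (for instance by $\Vert f\Vert_p^p$), one sees from the standing hypothesis that $\sum_{\nu=1}^\infty\nu^{rp-1}\E_{m,\nu}(f)_p^p<\infty$, which is exactly~\eqref{eqth2S.1} with $k=r$; thus Theorem~\ref{th2S} applies.

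It then remains to substitute into~\eqref{eqth2S.2}. The term $n^r\E_{m,n}(f)_p$ is bounded, directly by Theorem~\ref{th1S}, by $C\(\E_{m-r,n}(f^{(r)})_p+n^{1-1/p}\(\sum_{\nu=n+1}^\infty\nu^{-p}\E_{m-r,\nu}(f^{(r)})_p^p\)^{1/p}\)$, and the tail term satisfies, by the last display above and the identity $(p-1)/p=1-1/p$,
$$
\(\sum_{\nu=n+1}^\infty\nu^{rp-1}\E_{m,\nu}(f)_p^p\)^{1/p}\le Cn^{1-1/p}\(\sum_{\nu=n+1}^\infty\nu^{-p}\E_{m-r,\nu}(f^{(r)})_p^p\)^{1/p}.
$$
Adding the two estimates gives the asserted inequality. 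As for the difficulty: there is essentially none beyond bookkeeping — all the analysis sits in Theorem~\ref{th1S}, Theorem~\ref{th2S} and Lemma~\ref{lem.th2} — the only points needing a little care being the sign conditions $p-1<0$ and $p-2<-1$ that drive the double-sum manipulation (both guaranteed by $0<p<1$) and the fact that the spline $S_n$ realizing $\E_{m,n}(f)_p$ is simultaneously the one to which Theorem~\ref{th2S} refers and the one whose $r$th derivative lies in $\mathcal{S}_{m-r,n}$, so that $\E_{m-r,n}(f^{(r)})_p$ is the correct quantity on the right-hand side.
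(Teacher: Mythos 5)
Your proposal is correct and is exactly the route the paper intends: the paper proves Theorem~\ref{thsec3.2S} by combining Theorem~\ref{th1S} and Theorem~\ref{th2S} (with $k=r<m$) following the scheme of Theorem~\ref{thsec3.2}, and your summation bookkeeping reproduces the computation in \eqref{eqthsec3.2.3} verbatim. Your added remarks (verification of hypothesis \eqref{eqth2S.1}, identification of the $L_p$-sense derivative with the ordinary one via $f^{(r-1)}\in AC$, and $S_n^{(r)}\in\mathcal{S}_{m-r,n}$) are sound and only make explicit what the paper leaves implicit.
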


Combining Theorems~\ref{th1S},~\ref{th2S}, and~\ref{thsec3.2S}, we obtain the following equivalences.

\begin{corollary}\label{corsec3.1}
{\it Let $0<p<1$, $m,r\in \N$, $r<m$, $\a>{1}/{p}-1$, and let $f$
be such that $f^{(r-1)}\in AC[0,1]$. Then the following assertions
are equivalent:

$(i)$ $\E_{m,n}(f)_p=\mathcal{O}(n^{-r-\a})\,, \quad n\rightarrow
\infty\,,$

$(ii)$ $\E_{m-r,n}(f^{(r)})_p=\mathcal{O}(n^{-\a})\,, \quad
n\rightarrow \infty\,,$

$(ii)$ $\Vert f^{(r)}-S_n^{(r)}\Vert_p=\mathcal{O}(n^{-\a})\,, \quad
n\rightarrow \infty\,,$

\noindent where $S_n\in\mathcal{S}_{m,n}$, $n\in\N$, are such that
$\|f-S_n\|_p=\E_{m,n}(f)$.}
\end{corollary}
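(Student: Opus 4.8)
The plan is to obtain Corollary~\ref{corsec3.1} by combining the three preceding results — Theorem~\ref{th1S} (direct-type estimate), Theorem~\ref{th2S} (inverse-type estimate for the derivative), and Theorem~\ref{thsec3.2S} (simultaneous approximation) — exactly as the corresponding periodic corollaries (Corollary~\ref{corsec3.1E}, Corollary~\ref{corsec3.1Mod}) were derived from their trigonometric analogues. So the work is purely bookkeeping with sums of the form $\sum \nu^{\beta} a_\nu^p$; no new analytic tool is needed.

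First I would show $(ii)\Rightarrow(i)$. Assume $\E_{m-r,n}(f^{(r)})_p=\mathcal{O}(n^{-\a})$. Since $\a>1/p-1$, we have $-p\a-1<-p(1/p-1)-1=p-2<-1$, hence $\sum_\nu \nu^{-p}\E_{m-r,\nu}(f^{(r)})_p^p\le C\sum_\nu \nu^{-p-p\a}<\infty$, so Theorem~\ref{th1S} applies. Plugging the hypothesis into its conclusion gives
\begin{equation*}
\E_{m,n}(f)_p\le \frac{C}{n^r}\(n^{-\a}+n^{1-\frac1p}\(\sum_{\nu=n+1}^\infty \nu^{-p-p\a}\)^{\frac1p}\)\le \frac{C}{n^r}\(n^{-\a}+n^{1-\frac1p}n^{\frac1p-1-\a}\)=Cn^{-r-\a},
\end{equation*}
which is $(i)$. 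The tail estimate $\sum_{\nu>n}\nu^{-p-p\a}\asymp n^{1-p-p\a}$ uses $p+p\a>1$, again guaranteed by $\a>1/p-1$.

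Next, $(i)\Rightarrow(ii)$ and $(i)\Rightarrow(iii)$. Assume $\E_{m,n}(f)_p=\mathcal{O}(n^{-r-\a})$. To apply Theorem~\ref{th2S} with $k=r<m$ I must check $\sum_\nu \nu^{rp-1}\E_{m,\nu}(f)_p^p<\infty$; indeed this is $\le C\sum_\nu \nu^{rp-1-rp-p\a}=C\sum_\nu\nu^{-1-p\a}<\infty$ since $\a>0$. Theorem~\ref{th2S} then yields that $f^{(r)}$ exists in the sense~\eqref{eqProizvLp} and, with $S_n$ the best spline approximant of $f$,
\begin{equation*}
\|f^{(r)}-S_n^{(r)}\|_p\le C\(n^r\E_{m,n}(f)_p+\(\sum_{\nu>n}\nu^{rp-1}\E_{m,\nu}(f)_p^p\)^{\frac1p}\)\le C\(n^{-\a}+\(n^{-p\a}\)^{\frac1p}\)=Cn^{-\a},
\end{equation*}
which is $(iii)$. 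For $(ii)$: $S_n^{(r)}\in\mathcal{S}_{m-r,n}$ (differentiating a spline of degree $m-1$ on the grid $t_{j,n}$ gives a spline of degree $m-r-1$ on the same grid), so $\E_{m-r,n}(f^{(r)})_p\le \|f^{(r)}-S_n^{(r)}\|_p\le Cn^{-\a}$, giving $(ii)$. Finally $(ii)\Rightarrow(iii)$ follows directly from Theorem~\ref{thsec3.2S}: under $\E_{m-r,n}(f^{(r)})_p=\mathcal{O}(n^{-\a})$ the summability condition holds as checked above, and the conclusion of Theorem~\ref{thsec3.2S} gives $\|f^{(r)}-S_n^{(r)}\|_p\le C(n^{-\a}+n^{1-\frac1p}(n^{1-p-p\a})^{\frac1p})=Cn^{-\a}$. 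Together with $(iii)\Rightarrow(ii)$ (same spline-differentiation remark as above), all three statements are equivalent.

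The only point requiring slight care — the ``main obstacle,'' such as it is — is the consistent use of the hypothesis $\a>1/p-1$ to guarantee convergence of the geometric-type tail sums and to control the extra factor $n^{1-1/p}$ appearing in Theorems~\ref{th1S} and~\ref{thsec3.2S}; with $\a\le 1/p-1$ the implication $(ii)\Rightarrow(i)$ would fail. One also needs the elementary fact that $\mathcal{S}_{m,n}$ differentiates into $\mathcal{S}_{m-r,n}$, which holds because an element of $\mathcal{S}_{m,n}$ is $C^{m-2}$ and piecewise of degree $m-1$ on the knots $j/n$, so its $r$-th derivative is $C^{m-2-r}$ and piecewise of degree $m-1-r$ on the same knots, i.e.\ lies in $\mathcal{S}_{m-r,n}$ (this needs $r\le m-1$, which is exactly $r<m$). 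Everything else is routine estimation of sums.
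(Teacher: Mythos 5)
Your proposal is correct and follows exactly the route the paper intends: the paper gives no detailed argument, stating only that the corollary follows by combining Theorems~\ref{th1S}, \ref{th2S}, and~\ref{thsec3.2S}, and your implications $(ii)\Rightarrow(i)$ via Theorem~\ref{th1S}, $(i)\Rightarrow(iii)$ via Theorem~\ref{th2S}, and $(iii)\Rightarrow(ii)$ via $S_n^{(r)}\in\mathcal{S}_{m-r,n}$ (with the tail sums controlled by $\a>1/p-1$) are precisely the bookkeeping that realizes this. No gaps; the extra check of $(ii)\Rightarrow(iii)$ via Theorem~\ref{thsec3.2S} is harmless redundancy.
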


\subsection{\bf Inequalities for the non-periodic moduli of smoothness}

By analogy with the proof of Theorem~\ref{th1T}, combining Theorem~\ref{th1S}, Lemma~\ref{lemNSS},
and Lemma~\ref{lemJS}, we obtain a
non-periodic analogue of Theorem~\ref{th1T}.

\begin{theorem}\label{th1Sm}
Let $0<p<1$, $r,k,m\in \N$, and let a function $f$ be such that $f^{(r-1)}\in
AC[0,1]$. Then for any $\d>0$ we have
\begin{equation*}
%\label{eq.Sec2.1}
    \w_{r+k}(f,\d)_p\le C\d^r \w_k(f^{(r)},\d)_p+C\d^{r+\frac1p-1}\(\int_0^\d
    \frac{\w_m(f^{(r)},t)_p^p}{t^{2-p}}{\rm d}t\)^\frac1p,
\end{equation*}
where $C$ is some constant independent of $f$ and $\d$.
%Inequality
%(\ref{eq.Sec2.1}) means that if the right-hand is finite, then
%(\ref{eq.Sec2.1}) holds.
\end{theorem}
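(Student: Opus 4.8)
The plan is to imitate the proof of Theorem~\ref{th1T}, replacing the trigonometric tools by their spline counterparts: Lemma~\ref{lemJS} in place of Lemma~\ref{lemJT}, Lemma~\ref{lemNSS} in place of Lemma~\ref{lemNST}, and Theorems~\ref{th1S} and~\ref{thsec3.2S} in place of Theorems~\ref{thsec3.1} and~\ref{thsec3.2}. As in Theorem~\ref{th1T}, one may assume $\int_0^1\frac{\w_m(f^{(r)},t)_p^p}{t^{2-p}}{\rm d}t<\infty$, since otherwise the right-hand side of the claimed inequality is infinite for every $\d>0$; here we use that $f^{(r-1)}\in AC[0,1]$ forces $f^{(r)}\in L_1[0,1]\subset L_p[0,1]$, so that $\w_m(f^{(r)},\cdot)_p$ is bounded and finiteness of $\int_0^{\d_0}$ for one $\d_0>0$ is equivalent to finiteness of $\int_0^1$. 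Under this assumption, Lemma~\ref{lemJS}, applied to $f^{(r)}$ with the space $\Sp_{m,n}$, gives $\E_{m,\nu}(f^{(r)})_p\le C\w_m(f^{(r)},1/\nu)_p$, hence $\sum_{\nu\ge1}\nu^{-p}\E_{m,\nu}(f^{(r)})_p^p\le C\int_0^1\frac{\w_m(f^{(r)},t)_p^p}{t^{2-p}}{\rm d}t<\infty$, which is precisely the hypothesis needed to apply Theorems~\ref{th1S} and~\ref{thsec3.2S} with the spline space $\Sp_{m+r,n}$ (note $r<m+r$).

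Now fix $n$ with $1/(n+1)<\d\le1/n$ and let $S_n\in\Sp_{m+r,n}$ be a best $L_p$-approximant to $f$. By~\eqref{eqM0},
$$
\w_{r+k}(f,\d)_p^p\le\w_{r+k}(f,1/n)_p^p\le\w_{r+k}(f-S_n,1/n)_p^p+\w_{r+k}(S_n,1/n)_p^p=:M_1+M_2.
$$
For $M_1$ I would use~\eqref{eqM1} to reduce to $M_1\le C\|f-S_n\|_p^p=C\E_{m+r,n}(f)_p^p$, apply Theorem~\ref{th1S} with $\Sp_{m+r,n}$, then Lemma~\ref{lemJS} to pass from $\E_{m,\nu}(f^{(r)})_p$ to $\w_m(f^{(r)},1/\nu)_p$, and finally the elementary comparison
$$
\w_m(f^{(r)},1/n)_p^p+n^{p-1}\sum_{\nu>n}\nu^{-p}\w_m(f^{(r)},1/\nu)_p^p\le Cn^{p-1}\int_0^{1/n}\frac{\w_m(f^{(r)},t)_p^p}{t^{2-p}}{\rm d}t,
$$
which rests on $\int_{1/(\nu+1)}^{1/\nu}t^{p-2}{\rm d}t\asymp\nu^{-p}$ and the doubling property~\eqref{eqM2}; this yields $M_1\le Cn^{p-1-rp}\int_0^{1/n}\frac{\w_m(f^{(r)},t)_p^p}{t^{2-p}}{\rm d}t$. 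For $M_2$ I would apply Lemma~\ref{lemNSS} with spline order $l=m+r$ to extract $\nu=r$ derivatives of $S_n$ — admissible since $1\le r\le\min(r+k,m+r-1)$ — together with~\eqref{eqM0} and~\eqref{eqM1}, obtaining $M_2\le Cn^{-rp}\w_k(S_n^{(r)},1/n)_p^p\le Cn^{-rp}\bigl(\|f^{(r)}-S_n^{(r)}\|_p^p+\w_k(f^{(r)},1/n)_p^p\bigr)$; bounding $\|f^{(r)}-S_n^{(r)}\|_p^p$ by Theorem~\ref{thsec3.2S} and repeating the comparison gives $M_2\le Cn^{-rp}\w_k(f^{(r)},1/n)_p^p+Cn^{p-1-rp}\int_0^{1/n}\frac{\w_m(f^{(r)},t)_p^p}{t^{2-p}}{\rm d}t$. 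Adding the estimates for $M_1$ and $M_2$, replacing $1/n$ by $\d$ via~\eqref{eqM2} and the monotonicity of moduli (legitimate because $\d\le1/n<2\d$), and taking $p$-th roots gives the stated inequality.

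The two points that genuinely differ from the trigonometric case, and thus deserve attention, are: (i) the choice of the auxiliary spline order $m+r$ — it must exceed $r$ so that Theorems~\ref{th1S} and~\ref{thsec3.2S} apply and $S_n^{(r)}$ is meaningful, must be at least $r+1$ so that Lemma~\ref{lemNSS} permits removing $r$ derivatives, and is taken equal to $m+r$ precisely so that Lemma~\ref{lemJS}, applied to $f^{(r)}$, produces a modulus of the prescribed order $m$; and (ii) the discretization passing from the sums $\sum_{\nu>n}\nu^{-p}(\cdots)$ in Theorems~\ref{th1S} and~\ref{thsec3.2S}, and from the single terms $\w_m(f^{(r)},1/n)_p^p$, to the integral in the statement, which is the routine computation indicated above. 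No substantive obstacle arises: all the analytic content is already contained in Theorems~\ref{th1S} and~\ref{thsec3.2S} and in Lemma~\ref{lemNSS}, so the argument is a direct transcription of the proof of Theorem~\ref{th1T}.
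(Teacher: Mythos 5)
Your route is exactly the one the paper intends (the paper only sketches this proof, pointing to Theorem~\ref{th1S}, Lemma~\ref{lemNSS} and Lemma~\ref{lemJS} and the scheme of Theorem~\ref{th1T}), and most of your steps are sound: the reduction to finiteness of the integral, the choice of the spline space $\Sp_{m+r,n}$ so that Theorems~\ref{th1S} and~\ref{thsec3.2S} apply (their constraint being only $r<m+r$), the Jackson step producing $\w_m(f^{(r)},\cdot)_p$, and the sum-to-integral comparison are all correct. However, your admissibility check for Lemma~\ref{lemNSS} is incomplete, and this leaves a genuine gap: the lemma requires not only $1\le\nu\le\min(k,l-1)$ but also that the order of the modulus not exceed the spline order ($k\le l$ in the lemma's notation). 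In your application the modulus has order $r+k$ while $l=m+r$, so you need $k\le m$, which the theorem does not assume (it allows, say, $k=3$, $m=1$). Hence the estimate $M_2\le Cn^{-rp}\w_k(S_n^{(r)},1/n)_p^p$ is not covered by the lemma when $k>m$, and your "point (i)" — presented as a complete verification — is not.

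The gap is routine to close, in either of two ways. (i) Run your argument with the spline order $l=\max(k,m)+r$ throughout: then $r+k\le l$, so Lemma~\ref{lemNSS} applies; Theorems~\ref{th1S} and~\ref{thsec3.2S} still apply since $r<l$; and in the Jackson step Lemma~\ref{lemJS} gives $\E_{\max(k,m),\nu}(f^{(r)})_p\le C\w_{\max(k,m)}(f^{(r)},1/\nu)_p\le C\w_m(f^{(r)},1/\nu)_p$ by~\eqref{eqM1}, so the right-hand side keeps the prescribed order $m$. (ii) Alternatively, dispose of the case $k>m$ first: by~\eqref{eqM1}, $\w_{r+k}(f,\d)_p\le C\w_{r+m}(f,\d)_p$, apply the already established case $k=m$, and absorb the resulting term $\d^r\w_m(f^{(r)},\d)_p$ into the integral term, using that $\int_0^\d t^{p-2}\w_m(f^{(r)},t)_p^p\,{\rm d}t\ge c\,\d^{p-1}\w_m(f^{(r)},\d)_p^p$ by monotonicity and~\eqref{eqM2}. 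With either repair, your proof is a correct transcription of the proof of Theorem~\ref{th1T}, in line with the paper's indication.
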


%As above, combining Theorem~\ref{th2S}, Lemma~\ref{lemNSS}, and Lemma~\ref{lemJS}, we get the following result.
The next theorem is a counterpart of Theorem~\ref{thsec3.B}. Under more restrictive conditions, this theorem was obtained in~\cite{Ko03}.

\begin{theorem}\label{thModFrD}
{\it Let $f\in L_p[0,\,1]$, $0<p<1$, $r\,,k\in \N$, and $k<r$. Then for any $\d>0$ we have
\begin{equation}\label{eqthModFrD.1}
    \w_{r-k}(f^{(k)},\d)_p\le C\(\int_0^\d
    \frac{\w_r(f,t)_p^p}{t^{pk+1}}{\rm d}t\)^\frac1p,
\end{equation}
where $C$ is some constant independent of $f$ and
$\d$. Inequality (\ref{eqthModFrD.1}) means that if the right-hand side is finite, then there exists
 $f^{(k)}$ in the sense
(\ref{eqProizvLp}), $f^{(k)}\in L_p[0,\,1]$, and
(\ref{eqthModFrD.1}) holds.}
\end{theorem}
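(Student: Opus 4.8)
The plan is to mimic the scheme of Theorem~\ref{thsec3.1} (and its spline analogues), working with a dyadic sequence of best spline approximants and converting the integral hypothesis into a summable series via the standard discretization
$\int_0^\d \w_r(f,t)_p^p\,t^{-pk-1}\,{\rm d}t \asymp \sum_{\nu\ge 1/\d}\nu^{pk-1}\w_r(f,1/\nu)_p^p$, which holds because $\w_r(f,\cdot)_p$ is essentially monotone (use \eqref{eqM2}). So it suffices to assume $\sum_{\nu=1}^\infty\nu^{pk-1}\w_r(f,1/\nu)_p^p<\infty$ and prove
$\w_{r-k}(f^{(k)},1/n)_p\le C\big(\sum_{\nu\ge n}\nu^{pk-1}\w_r(f,1/\nu)_p^p\big)^{1/p}$, from which \eqref{eqthModFrD.1} follows after the reverse discretization and \eqref{eqM2}.

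First I would fix, for each $n$, a spline $S_n\in\mathcal{S}_{r,n}$ with $\|f-S_n\|_p\le C\w_r(f,1/n)_p$, supplied by Lemma~\ref{lemJS}; note $m:=r$ here, and $k<r=m$, so Lemma~\ref{lem.th2} and Theorem~\ref{th2S} are applicable with $\E_{r,\nu}(f)_p\le C\w_r(f,1/\nu)_p$. Since $\sum_\nu \nu^{pk-1}\E_{r,\nu}(f)_p^p<\infty$, Theorem~\ref{th2S} already gives that $f^{(k)}$ exists in the sense \eqref{eqProizvLp}, $f^{(k)}\in L_p[0,1]$, and
$\|f^{(k)}-S_n^{(k)}\|_p\le C\big(n^k\E_{r,n}(f)_p+(\sum_{\nu>n}\nu^{pk-1}\E_{r,\nu}(f)_p^p)^{1/p}\big)$, hence in particular $\|f^{(k)}-S_n^{(k)}\|_p\to0$. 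This disposes of the existence claim. It remains to bound $\w_{r-k}(f^{(k)},1/n)_p$. I would split, using \eqref{eqM0},
$\w_{r-k}(f^{(k)},1/n)_p^p\le \w_{r-k}(f^{(k)}-S_n^{(k)},1/n)_p^p+\w_{r-k}(S_n^{(k)},1/n)_p^p$. The first term is $\le 2^{(r-k)\cdot}\|f^{(k)}-S_n^{(k)}\|_p^p$ by \eqref{eqM1}, and is controlled by the Theorem~\ref{th2S} bound above. For the second term, Lemma~\ref{lemNSS} (with $l=r$, applied to $S_n^{(k)}\in\mathcal{S}_{r,n}$, noting $1\le k<r$) gives $\w_{r-k}(S_n^{(k)},1/n)_p\asymp n^k\w_r(S_n,1/n)_p$; then telescoping $S_n=S_1+\sum_{\mu}(S_{2^{\mu+1}}-S_{2^\mu})$ through dyadic blocks and applying Lemma~\ref{lemNSS} together with $\w_r(S_{2^{\mu+1}}-S_{2^\mu},1/n)_p\le C n^{-r-1/p+1}2^{\mu(1/p-1)}\cdots$ in the style of \eqref{eqthsec3.1.10}, or more directly bounding $\w_r(S_n,1/n)_p\le C\w_r(f,1/n)_p+C\|f-S_n\|_p\le C\w_r(f,1/n)_p$ via \eqref{eqM0} and Lemma~\ref{lemJS}, yields $n^k\w_r(S_n,1/n)_p\le Cn^k\w_r(f,1/n)_p$. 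Assembling the two pieces and absorbing $n^k\E_{r,n}(f)_p\le Cn^k\w_r(f,1/n)_p\le C(\sum_{\nu\ge n}\nu^{pk-1}\w_r(f,1/\nu)_p^p)^{1/p}$ (the last step because each term $\nu^{pk-1}\w_r(f,1/\nu)_p^p$ with $\nu\asymp n$ already has size $\asymp n^{pk}\w_r(f,1/n)_p^p$, using essential monotonicity of $\w_r$ from \eqref{eqM2}) gives the claimed inequality.

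The main obstacle I anticipate is the bookkeeping in the second term: one must be careful that Lemma~\ref{lemNSS} requires the degree parameter $l=r$ to satisfy $l\ge2$ and $1\le k\le l$, and that the telescoping estimate for $\w_r(S_{2^{\mu+1}}-S_{2^\mu},1/n)_p$ uses \eqref{eqM2} to rescale from argument $2^{-\mu}$ (where the Jackson error lives) to argument $1/n$ with the correct power $(2^\mu/n)^{r+1/p-1}$, exactly as in \eqref{eqthsec3.1.10}. The shortcut $\w_r(S_n,1/n)_p\le C\w_r(f,1/n)_p$ avoids most of this, but one should double-check that no $o(\cdot)$-type hypothesis is silently needed — it is not, since we only use the finiteness \eqref{eqth2S.1}, which is our standing assumption. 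Beyond that, the argument is a routine transcription of the periodic proof with Lemmas~\ref{lemJS}--\ref{lemMarkS} and Theorem~\ref{th2S} replacing their trigonometric counterparts.
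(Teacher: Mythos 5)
Your proposal is correct and follows essentially the paper's own (one-sentence) proof, which likewise obtains \eqref{eqthModFrD.1} by combining Theorem~\ref{th2S}, Lemma~\ref{lemNSS}, and Lemma~\ref{lemJS} with the standard discretization of the integral and properties \eqref{eqM0}--\eqref{eqM2}. Two cosmetic points: apply Theorem~\ref{th2S} to the best spline approximants, bounding $\E_{r,\nu}(f)_p\le C\w_r(f,1/\nu)_p$ via Lemma~\ref{lemJS}, rather than to the Jackson splines themselves; and the bound $n^{pk}\w_r(f,1/n)_p^p\le C\sum_{\nu\ge n}\nu^{pk-1}\w_r(f,1/\nu)_p^p$ comes from summing the $\asymp n$ terms with $n\le\nu\le 2n$, each of size $\asymp n^{pk-1}\w_r(f,1/n)_p^p$ (not $n^{pk}$ individually, as your parenthetical suggests).
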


\begin{proof}
The theorem can be proved combining Theorem~\ref{th2S}, Lemma~\ref{lemNSS}, and Lemma~\ref{lemJS}.
\end{proof}

\begin{corollary}\label{corsec3.1ModS}
Under the conditions of Corollary~\ref{corsec3.1}, the following
assertions are equivalent for any $k\in \N$:

$(i)$ $\w_{r+k}(f,\d)_p=\mathcal{O}(\d^{r+\a})\,, \quad
\d\rightarrow 0\,,$

$(ii)$ $\w_{k}(f^{(r)},\d)_p=\mathcal{O}(\d^{\a})\,, \quad
\d\rightarrow 0\,.$
\end{corollary}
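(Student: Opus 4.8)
The plan is to obtain the equivalence by combining the two one-sided estimates already at hand: Theorem~\ref{th1Sm} yields the implication $(ii)\Rightarrow(i)$, while Theorem~\ref{thModFrD} yields $(i)\Rightarrow(ii)$. Under a power-type bound on a modulus of smoothness the integrals on the right-hand sides of these two theorems become elementary, and the restriction $\a>1/p-1$ inherited from Corollary~\ref{corsec3.1} is exactly what makes them converge at the origin.

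To prove $(ii)\Rightarrow(i)$, I would assume $\w_k(f^{(r)},\d)_p\le C\d^\a$ for small $\d$ and apply Theorem~\ref{th1Sm} with its parameter $m$ chosen to be $k$, which gives
\[
\w_{r+k}(f,\d)_p\le C\d^r\w_k(f^{(r)},\d)_p+C\d^{r+\frac1p-1}\left(\int_0^\d\frac{\w_k(f^{(r)},t)_p^p}{t^{2-p}}\,{\rm d}t\right)^{1/p}.
\]
The first term is $\le C\d^{r+\a}$. For the second, $\w_k(f^{(r)},t)_p^p\le Ct^{\a p}$ makes the integrand $\le Ct^{\a p+p-2}$; since $\a>1/p-1$ we have $\a p+p-2>-1$, so the integral converges and equals $C\d^{\a p+p-1}$, and $\d^{r+1/p-1}(\d^{\a p+p-1})^{1/p}=\d^{r+\a}$. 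Hence $\w_{r+k}(f,\d)_p=\mathcal{O}(\d^{r+\a})$.

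For the reverse implication $(i)\Rightarrow(ii)$, I would apply Theorem~\ref{thModFrD} with the parameters $r$ and $k$ of its statement replaced by $r+k$ and $r$, respectively; the hypothesis $k<r$ there then reads $r<r+k$, which holds for every $k\in\N$. This gives
\[
\w_k(f^{(r)},\d)_p\le C\left(\int_0^\d\frac{\w_{r+k}(f,t)_p^p}{t^{pr+1}}\,{\rm d}t\right)^{1/p}.
\]
Plugging in $(i)$, that is $\w_{r+k}(f,t)_p\le Ct^{r+\a}$, bounds the integrand by $Ct^{\a p-1}$; since $\a>1/p-1>0$ the integral converges to $C\d^{\a p}$, so $\w_k(f^{(r)},\d)_p\le C\d^\a$. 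The modulus $\w_k(f^{(r)},\cdot)_p$ is meaningful here because $f^{(r-1)}\in AC[0,1]$ forces $f^{(r)}\in L_1[0,1]\subset L_p[0,1]$; moreover Theorem~\ref{thModFrD} certifies that $f^{(r)}$ also exists in the sense~\eqref{eqProizvLp}, consistently with the classical derivative.

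I do not expect a genuine obstacle: the argument is a bookkeeping combination of Theorems~\ref{th1Sm} and~\ref{thModFrD} with two elementary integrals. The only points requiring attention are the index shift in the application of Theorem~\ref{thModFrD} and the verification that $\a>1/p-1$ is precisely the convergence threshold in the direction $(ii)\Rightarrow(i)$ (the implication $(i)\Rightarrow(ii)$ in fact uses only $\a>0$), which also clarifies why the equivalence should fail for $\a\le 1/p-1$, in line with the sharpness results established earlier in the paper.
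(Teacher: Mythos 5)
Your argument is correct and matches the paper's intended route: the corollary is obtained exactly by combining Theorem~\ref{th1Sm} (with $m=k$) for $(ii)\Rightarrow(i)$ and Theorem~\ref{thModFrD} (with the index shift $r\mapsto r+k$, $k\mapsto r$) for $(i)\Rightarrow(ii)$, just as the periodic analogue follows from Theorems~\ref{th1T} and~\ref{thsec3.B}. The elementary evaluation of the two integrals under the power bounds, including the role of $\a>1/p-1$, is exactly the bookkeeping the paper leaves to the reader.
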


\subsection{\bf The sharpness of the main results in the non-periodic case}

We omit the formulation of analogues of Proposition~\ref{pr1T} and
\ref{pr.Sec2.1}. We only note that the sharpness of
Theorems~\ref{th1S} and~\ref{th1Sm} can be shown by using the same
examples as in the periodic case in Section~2. For this, one can use the following counterpart
of Theorem~\ref{thCT}. The next theorem is interesting in its own (see, e.g.,~\cite{Rath94}).

\begin{theorem}\label{thbelowS}
Let $f\in L_p[0,1]$, $0<p<1$, and $s,n\in \N$. Then the following
assertions are equivalent:

\smallskip

\noindent $(i)$ for some $k>s$ there exist constants $M>0$ and
$\mu>0$ such that for any $h\in (\mu/n,1)$
\begin{equation}\label{eq.thbelowS1}
    \w_s(f,h)_p\le M\w_k(f,h)_p,
\end{equation}

\noindent $(ii)$  there exists a constant $L>0$ independent of $f$
and $n$ such that
\begin{equation}\label{eq.thbelowS2}
\w_s\(f,\frac1n\)_p\le L \E_{k,n}(f)_p.
\end{equation}
\end{theorem}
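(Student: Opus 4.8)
The plan is to mimic the structure of the proof of Theorem~\ref{thCT}, replacing trigonometric polynomials by splines from $\mathcal{S}_{k,n}$ and using the spline-specific tools of this section (Lemma~\ref{lemJS}, Lemma~\ref{lemNSS}, Lemma~\ref{lemNikS}, Lemma~\ref{lemMarkS}) in place of Lemmas~\ref{lemJT}--\ref{lemNikT}. The direction $(ii)\Rightarrow(i)$ is the easy one: assuming \eqref{eq.thbelowS2}, pick for each $h\in(\mu/n,1)$ an integer $\ell$ with $1/(\ell+1)<h\le 1/\ell$, so $\ell<n/\mu$; by the Jackson-type estimate (Lemma~\ref{lemJS}) applied with order $k$ one has $\w_s(f,h)_p\le C\w_s(f,1/\ell)_p\le C L\,\E_{k,\ell}(f)_p$, and since $\E_{k,\ell}(f)_p\le C\w_k(f,1/\ell)_p\le C\w_k(f,h)_p$ again by Lemma~\ref{lemJS} and the monotonicity/scaling properties \eqref{eqM2} of moduli, we obtain \eqref{eq.thbelowS1} with $M=M(s,k,p,L,\mu)$.

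The substantive direction is $(i)\Rightarrow(ii)$. First I would reduce to the case where $\w_s(f,1/n)_p>0$ (otherwise there is nothing to prove) and fix $n$. Choose $S\in\mathcal{S}_{k,n}$ with $\|f-S\|_p=\E_{k,n}(f)_p$. Using \eqref{eqM0} and Lemma~\ref{lemJS} (order $s$), write
\begin{equation*}
  \w_s(f,1/n)_p^p\le \w_s(f-S,1/n)_p^p+\w_s(S,1/n)_p^p\le C\,\E_{k,n}(f)_p^p+\w_s(S,1/n)_p^p,
\end{equation*}
so everything comes down to bounding $\w_s(S,1/n)_p$ by $C\,\E_{k,n}(f)_p$. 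Here is where hypothesis \eqref{eq.thbelowS1} with $k>s$ enters, through a telescoping/dyadic argument: one compares $S=S_n$ against spline approximants $S_{2^j}\in\mathcal{S}_{k,2^j}$ on coarser meshes, uses Lemma~\ref{lemNSS} to pass between $\w_s$ and $\w_k$ of a spline on its natural mesh (which costs a factor $n^{-(k-s)}$-type gain absorbed by the exponent inequality $k>s$), and uses the Nikolskii inequality for splines (Lemma~\ref{lemNikS}) together with Lemma~\ref{lemMarkS} to control the derivatives of the differences $S_{2^{j+1}}-S_{2^j}$. The role of \eqref{eq.thbelowS1} is exactly to guarantee that the series $\sum_j \w_k(f,2^{-j})_p^p$-type tail, which one naturally produces, is dominated by $\w_s(f,1/n)_p^p$ — i.e. that the "low-order" modulus controls the sum of the "high-order" ones down to scale $1/n$; this is the same mechanism as in Theorem~\ref{thCT}, where \eqref{eqML} plays the identical part.

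The main obstacle I expect is the spline analogue of the polynomial manipulations in \eqref{eqthsec3.1.4}--\eqref{eqthsec3.1.5}: one must transfer the Stechkin--Nikolskii-type equivalence $h^r\|T_n^{(r)}\|_p\asymp\|\D_h^rT_n\|_p$ to splines, and the correct replacement is Lemma~\ref{lemNSS}, but it only compares the modulus of $S_n$ with that of $S_n^{(\nu)}$ at the single scale $n^{-1}$, not at all $h\le\pi/n$ as in Lemma~\ref{lemNST}. Consequently the $\sup_{0<h\le 1/n}$ inside $\w_s$ must be handled by first invoking \eqref{eqM2} to reduce to the scale $h=1/n$ (at the cost of a harmless constant) before applying Lemma~\ref{lemNSS}; getting this bookkeeping right, and checking that the dyadic sum telescopes with the exponent $k-s>0$ giving a convergent geometric factor, is the technical heart. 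Once that is in place, combining the pieces yields \eqref{eq.thbelowS2} with $L=L(s,k,p,M,\mu)$, completing the proof.
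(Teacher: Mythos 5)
Your $(ii)\Rightarrow(i)$ direction is fine and agrees with the paper (it is indeed an immediate consequence of Lemma~\ref{lemJS} together with \eqref{eqM1}--\eqref{eqM2}). The gap is in $(i)\Rightarrow(ii)$. Your reduction $\w_s(f,1/n)_p^p\le C\E_{k,n}(f)_p^p+\w_s(S_n,1/n)_p^p$ merely restates the problem: bounding $\w_s(S_n,1/n)_p$ by $C\E_{k,n}(f)_p$ is equivalent to the conclusion, and the mechanism you sketch for it does not close. Telescoping $S_n$ against coarser-mesh splines $S_{2^j}$, $2^j\le n$, produces errors $\E_{k,2^j}(f)_p\ge\E_{k,n}(f)_p$, hence (after Jackson) a weighted sum of $\w_k(f,2^{-j})_p^p$ over coarse scales; and your stated use of \eqref{eq.thbelowS1}, namely that this tail is ``dominated by $\w_s(f,1/n)_p^p$'', ends in an estimate of the form $\w_s(f,1/n)_p^p\le C\E_{k,n}(f)_p^p+C'\w_s(f,1/n)_p^p$, which proves nothing unless one can force $C'<1$. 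You provide no device for that, and indeed none of the tools you list (Lemma~\ref{lemNSS}, Lemma~\ref{lemNikS}, Lemma~\ref{lemMarkS}) supplies one; note also that \eqref{eq.thbelowS1} gives a \emph{lower} bound for $\w_k$ in terms of $\w_s$, so by itself it cannot tame a sum of $\w_k$'s.

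What is actually needed, and what the paper does, is a lower bound $\E_{k,n}(f)_p\ge c\,\w_k(f,1/n)_p$, obtained by a Rathore-type two-scale argument: from \eqref{eq.thbelowS1}, \eqref{eqM1}, and \eqref{eqM2} one first derives the improved scaling inequality $\w_k(f,\lambda h)_p\le CM(\lambda+1)^{s+\frac1p-1}\w_k(f,h)_p$; combining this with Lemma~\ref{lemJS} one bounds $\sum_{\nu\le n}\nu^{(k-1)p}\E_{k,\nu}(f)_p^p$ by $CM^pn^{(k-1)p+1}\w_k(f,1/n)_p^p$; then one invokes the \emph{inverse theorem for best spline approximation} (Theorem~3 of~\cite{Os80}) at the finer scale $1/(mn)$, splits the resulting sum at $\nu=n$, uses the monotonicity of $\E_{k,\nu}(f)_p$ on the range $n<\nu\le mn$, and finally chooses $m$ large enough (possible precisely because $k>s$ makes $m^{(k-s)p}$ beat $M^p$) to absorb the coarse-scale contribution. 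Your sketch contains neither the inverse theorem nor this free parameter $m$ that performs the absorption, and these are the essential ingredients; the Stechkin--Nikolskii bookkeeping you identify as the ``technical heart'' is a minor point by comparison.
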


\begin{proof}
Let condition~(\ref{eq.thbelowS1}) be satisfied. Then from
(\ref{eqM1}) and (\ref{eqM2}) we get
\begin{equation}\label{rathoreqiP}
      \w_{k}(f,\lambda h)_{p}\le C M (\lambda+1)^{s+\frac
      1p-1}\w_{k}(f,h)_{p}.
\end{equation}
%for all $\l>0$ and $h\in(0,1]$.
%Let us prove that
%\begin{equation}\label{rathInvP}
%       \frac{1}{n^{(k-1)p+1}}\sum_{\nu=1}^n\nu^{(k-1)p}\E_{k,\nu}(f)_{p}^p\le
%       C M^p \w_{k}\bigg(f,\frac 1n\bigg)_{p}^p,
%\end{equation}
%where $C$ is some positive constant independent of $f$ and $n$.
Using Lemma~\ref{lemJS} and inequality (\ref{rathoreqiP}), we obtain
\begin{equation}\label{rathInvP}
\begin{split}
      \frac{1}{n^{(k-1)p+1}}\sum_{\nu=1}^n\nu^{(k-1)p}\E_{k,\nu}(f)_{p}^p&\le
      \frac{C}{n^{(k-1)p+1}}\sum_{\nu=1}^n\nu^{(k-1)p} \w_{k}\left(f,\frac
      {1}{\nu}\right)_{p}^p\\
      &\le\frac{CM^p}{n^{(k-s)p}} \w_{k}\left(f,\frac 1n
      \right)^p_{p}\sum_{\nu=1}^n\nu^{(k-s)p-1}\\
      &\le CM^p \w_{k}\left(f,\frac 1n\right)_{p}^p,
\end{split}
\end{equation}
where $C$ is some positive constant independent of $f$ and $n$.
Next, using the inverse inequality for the best spline
approximation (see~\cite[Theorem~3]{Os80}) and (\ref{rathInvP}), we
get for all $m, n\in\N$
\begin{equation}\label{eqRathorXXXP}
\begin{split}
       \w_{k}\(f,\frac{1}{mn} \)_{p}^p&\le
       \frac{C}{(mn)^{(k-1)p+1}}\sum_{\nu=1}^{mn}\nu^{(k-1)p}\E_{k,\nu}(f)_{p}^p\\
       &=\frac{C}{(mn)^{(k-1)p+1}}\(\sum_{\nu=n+1}^{mn}\nu^{(k-1)p}\E_{k,\nu}(f)_{p}^p+
      \sum_{\nu=1}^n\nu^{(k-1)p}\E_{k,\nu}(f)_{p}^p \)\\
       &\le  C\Bigg(\frac{1}{(mn)^{(k-1)p+1}}\sum_{\nu=n+1}^{mn}
      \nu^{(k-1)p}\E_{k,\nu}(f)_{p}^p+\frac{M^p}{m^{(k-1)p+1}}
      \w_{k}\left(f,\frac 1n\right)_{p}^p\Bigg).
\end{split}
\end{equation}
Inequality~(\ref{eqRathorXXXP}) implies that
\begin{equation}\label{XXX}
\begin{split}
        \sum_{\nu=n+1}^{mn}\nu^{(k-1)p}\E_{k,\nu}(f)_{p}^p\ge
      \frac{(mn)^{(k-1)p+1}}{C}&\w_{k}\left(f,\frac{1}{mn}\right)_{p}^p\\
      &-M^p n^{(k-1)p+1}\w_{k}\left(f,\frac
     1n\right)_{p}^p\,.
\end{split}
\end{equation}
From~\eqref{XXX}, using the monotonicity of $\E_{k,n}(f)_{p}$ and
(\ref{rathoreqiP}), we derive
\begin{equation*}
      \E_{k,n}(f)_{p}^p\sum_{\nu=n+1}^{mn}\nu^{(k-1)p}\ge (C m^{(k-s)p}-M^p)n^{(k-1)p+1}
      \w_{k}\left(f,\frac{1}{n} \right)_{p}^p.
\end{equation*}
Thus,  choosing an appropriate $m$, we can find a positive constant
$C$ independent of $f$ and $n$ such that
\begin{equation*}
      \E_n(f)_{p}^p\ge C \w_{k}\left(f,\frac 1n\right)_{p}^p.
\end{equation*}
From the last inequality and (\ref{eq.thbelowS1}) we obtain
(\ref{eq.thbelowS2}).

The reverse direction is an immediate consequence of
Lemma~\ref{lemJS}.
\end{proof}

\section{Approximation of functions by algebraic polynomials. Ditzian-Totik moduli of smoothness}

In this section, we denote $\Vert \cdot \Vert_p=\Vert \cdot \Vert_{L_p[-1,1]}$.
Let $\mathcal{P}_n$ be the set of all algebraic polynomials of degree at most $n$ and let
$$
E_n(f)_{p,w}=\inf_{P\in \mathcal{P}_n}\Vert w(f-P)\Vert_p
$$
be the error of the best approximation of a function $f$ in the space $L_p[-1,1]$ with a weight $w$.
In what follows, if $w_0(x)=1$, then we write
$$
E_n(f)_{p}=E_n(f)_{p,w_0}.
$$

Let $f\in L_p[-1,1]$, $0<p<\infty$, $r\in\N$, $\vp(x)=\sqrt{1-x^2}$, and $w(x)=\vp^\s(x)$, $\s\ge 0$. Recall that the Ditzian-Totik modulus of smoothness $\w_r^\vp(f,\d)_{p}$ and the weighted main part moduli of smoothness $\Omega_r^\vp(f,\d)_{p,w}$ are given by
$$
\w_r^\vp (f,\d)_{p}=\sup_{|h|\le \d}\Vert \Dl_{h\vp}^r f\Vert_{L_p[-1,1]}
$$
and
$$
\Omega_r^\vp(f,\d)_{p,w}=\sup_{|h|\le \d}\Vert w\Dl_{h\vp}^r
f\Vert_{L_p[-1+2r^2h^2,1-2r^2h^2]},
$$
where
%$$
%I_{r,h}=[-1+2r^2h^2,1-2r^2h^2]
%$$
%$\Dl_{h\vp(x)}^r f(x)$ is given by
$$
\Dl_{h\vp(x)}^r f(x)=\left\{
                   \begin{array}{ll}
                     \displaystyle \sum_{k=0}^r (-1)^k\binom{r}{k}f\(x+\(\frac r2-k\)h\vp(x)\), & \hbox{$x\pm \frac r2 h\vp(x)\in [-1,1]$,} \\
                     \displaystyle 0, & \hbox{otherwise.}
                   \end{array}
                 \right.
$$

%In this section we study an analog of inequality (\ref{eq.Sec2.1}) for the Ditzian-Totik (D-T) moduli of smoothness.
%We will need some auxiliary notations.

\subsection{{\bf Inequalities for the error of  the best approximation of functions by algebraic polynomials}}

The next theorem is an analogue of Theorem~\ref{thsec3.1} and Theorem~\ref{th1S} in the case of approximation of functions by algebraic polynomials.
%Now we are ready to formulate the main result of this section.

\begin{theorem}\label{th3}
Let $0<p<1$, $r\in\N$, and let $f$ be such that $f^{(r-1)}\in AC[-1,\,1]$, $\vp^rf^{(r)}\in L_1[-1,1]$, and
\begin{equation}\label{eq.th3.0}
\sum_{\nu=r+1}^\infty\nu^{1-2p}E_{\nu-r}(f^{(r)})_{p,\vp^r}^p<\infty.
\end{equation}
Then for any $n\ge n(r)$ we have
\begin{equation}\label{eq.th3.1}
E_n(f)_p\leq \frac{C}{n^r}\(E_{n-r}(f^{(r)})_{p,\,\vp^r}+n^{2-\frac{2}{p}}\(\sum_{\nu=n+1}^\infty\nu^{1-2p}E_{\nu-r}(f^{(r)})_{p,\vp^r}^p\)^{\frac{1}{p}}\)\,,
\end{equation}
where $C$ is a constant independent of $f$ and $n$.
\end{theorem}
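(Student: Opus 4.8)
The plan is to follow the same dyadic-blocking scheme used in the proof of Theorem~\ref{thsec3.1}, but with trigonometric polynomials replaced by algebraic polynomials of the best weighted approximation to $f^{(r)}$ and with the weight $\vp^r$ carefully tracked. First I would fix $U_{n}\in\mathcal{P}_{n}$, $n\ge 1$, with $\Vert\vp^r(f^{(r)}-U_{n})\Vert_p=E_{n}(f^{(r)})_{p,\vp^r}$, and then build an algebraic polynomial $Q_{n}\in\mathcal{P}_{n+r}$ (say, by $r$-fold integration of $U_{n-r}$, adjusting the lower-order polynomial part so that $Q_n^{(r)}=U_{n-r}$) which plays the role of $T_n$. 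For $n$ fixed, choose $m$ with $2^{m-1}\le n<2^m$ and split, as in \eqref{eqthsec3.1.3},
\begin{equation*}
E_n(f)_p^p\le E_n(Q_{2^m+r})_p^p+E_n(f-Q_{2^m+r})_p^p=:M_1+M_2.
\end{equation*}

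For $M_1$ the idea is to telescope $Q_{2^m+r}-Q_{n+r}$ and use a weighted Jackson-type inequality together with the analogue of the Stechkin–Nikolskii / Markov-type inequality for algebraic polynomials in $L_p[-1,1]$, $0<p<1$: for $P\in\mathcal{P}_n$ one has $\Vert P\Vert_p\le C n^{-r}\Vert\vp^r P^{(r)}\Vert_p$ up to the usual endpoint corrections. Applying this to $Q_{2^m+r}-Q_{n+r}$ whose $r$th derivative is $U_{2^m}-U_n$, and using that $\Vert\vp^r(U_{2^m}-U_n)\Vert_p\le C E_n(f^{(r)})_{p,\vp^r}$, should give $M_1\le Cn^{-rp}E_n(f^{(r)})_{p,\vp^r}^p$. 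The factor $n^{2-2/p}$ in \eqref{eq.th3.1} indicates that the relevant Nikolskii inequality of different metrics in the weighted algebraic setting produces $n^{2(1/p-1)}$ rather than $n^{1/p-1}$; I would use the known $L_p$–$L_1$ Nikolskii inequality for algebraic polynomials with the Jacobi-type weight $\vp^r$, which accounts for this discrepancy, to pass from $\Vert\cdot\Vert_p$ to $\Vert\cdot\Vert_1$ when proving that $E_n(f-Q_{2^N+r})_p\to 0$ (this is where the hypothesis $\vp^rf^{(r)}\in L_1[-1,1]$ and condition \eqref{eq.th3.0} enter, exactly as \eqref{eqthsec3.1.1} was used via \eqref{zvezda}).

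For $M_2$ I would again telescope $f-Q_{2^m+r}=\sum_{\mu\ge m}(Q_{2^{\mu+1}+r}-Q_{2^\mu+r})$, use subadditivity of $\Vert\cdot\Vert_p^p$, and estimate each block by combining the weighted Jackson inequality with the Markov/Stechkin–Nikolskii inequality as in \eqref{eqthsec3.1.10}, picking up a factor $2^{\mu(2/p-2)}n^{-r-2/p+2}$ from the change-of-scale argument (again the exponent is $2(1/p-1)$ instead of $1/p-1$ because of the weight $\vp^r$). Summing over $\mu\ge m$ and converting $\sum_{\mu\ge m}2^{(2-2p)\mu}E_{2^\mu}(f^{(r)})_{p,\vp^r}^p$ into $\sum_{\nu\ge n+1}\nu^{1-2p}E_{\nu-r}(f^{(r)})_{p,\vp^r}^p$ via monotonicity of the best approximations and $2^{\mu}\asymp\nu$ yields $M_2\le Cn^{-rp+2-2p}\sum_{\nu\ge n+1}\nu^{1-2p}E_{\nu-r}(f^{(r)})_{p,\vp^r}^p$. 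Adding the $M_1$ and $M_2$ bounds and taking $p$th roots gives \eqref{eq.th3.1}.

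The main obstacle, I expect, is the weighted endpoint analysis: the operators $\Dl^r_{h\vp}$ and the weight $\vp^r$ degenerate at $\pm1$, so the clean Stechkin–Nikolskii and Markov inequalities for trigonometric polynomials must be replaced by their algebraic-weighted analogues (in the spirit of Ditzian–Totik and of the weighted Nikolskii/Markov inequalities for Jacobi weights in $L_p$, $0<p<1$), and one must check that $r$-fold integration of $U_{n-r}$ does produce an admissible polynomial $Q_n$ whose error $E_n(f-Q_n)_p$ is controlled and whose derivative equals $U_{n-r}$ with the correct weighted norm bound. Handling the boundary terms in the weighted $L_p$–$L_1$ Nikolskii step (to justify convergence $U_{2^\mu}\to f^{(r)}$ in $L_1(\vp^r)$ and hence the telescoping identity for $f-Q_{2^N+r}$) is the delicate point; everything else is a routine adaptation of the proof of Theorem~\ref{thsec3.1}, which is why the authors note that the argument is "slightly simpler" once these tools are in place.
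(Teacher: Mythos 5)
Your overall scheme is the same as the paper's: take $P_n\in\mathcal P_n$ whose $r$th derivative is the best $\vp^r$-weighted approximant of $f^{(r)}$, split $E_n(f)_p^p\le E_n(P_{2^m})_p^p+E_n(f-P_{2^m})_p^p$, estimate the first term by the Jackson inequality (Lemma~\ref{lemJP}) plus the equivalence $\w_r^\vp(P,\d)_p\asymp\d^r\Vert\vp^r P^{(r)}\Vert_p$ (Lemma~\ref{lemNSP}), telescope the second term over dyadic blocks, and justify the telescoping via the weighted Nikolskii inequality (Lemma~\ref{lemNikP}), H\"older, and $\w_r^\vp(g,\d)_1\le C\d^r\Vert\vp^r g^{(r)}\Vert_1$, which is exactly where $\vp^rf^{(r)}\in L_1$ and \eqref{eq.th3.0} enter. (Two small points: the paper does not need your $r$-fold-integration construction or any analogue of the $\tau_u$ device, since Lemma~\ref{lemNSP} acts directly on $\w_r^\vp$; and the inequality you state, $\Vert P\Vert_p\le Cn^{-r}\Vert\vp^r P^{(r)}\Vert_p$ for all $P\in\mathcal P_n$, is false as written --- take $P$ constant --- what is actually used is Jackson applied to $E_n(P_{2^m}-P_n)$ followed by Lemma~\ref{lemNSP}.)

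The genuine gap is in the block estimate, which you dismiss as "a routine change-of-scale argument ... picking up $2^{\mu(2/p-2)}n^{-r-2/p+2}$". In the trigonometric proof, the corresponding step \eqref{eqthsec3.1.10} rests on the scaling property \eqref{eqM2} of the classical modulus; for the Ditzian--Totik modulus with $0<p<1$ no such inequality with explicit $\lambda$-dependence was known --- only the doubling estimate \eqref{eq.*} --- and Lemma~\ref{lemNSP} cannot be applied at scale $n^{-1}$ to a polynomial of degree $\approx 2^{\nu}\gg n$, since it requires $\d\le(M\,\mathrm{deg})^{-1}$. So to bound $\w_r^\vp(P_{2^{\nu+1}}-P_{2^\nu},n^{-1})_p$ one must first pass from scale $n^{-1}$ down to $(M2^{\nu+1})^{-1}$ at the cost of a factor $(2^{\nu+1}/n)^{r+2(1/p-1)}$, i.e.\ one needs $\w_r^\vp(f,\lambda\d)_p\le C(1+\lambda)^{r+2(1/p-1)}\w_r^\vp(f,\d)_p$. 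This is precisely Lemma~\ref{lem.modP}, a new result whose proof (via Lemma~\ref{lem2AAA}, the integral representation of $\w_r^\vp$ from \cite{DHI}, and a careful change of variables near $\pm1$) is the main technical content of the section; your proposal assumes it implicitly without recognizing that it must be proved. Relatedly, the authors' remark that the argument is "slightly simpler" refers to the spline case (Theorem~\ref{th1S}); for the algebraic case they explicitly give a detailed proof because of exactly this new ingredient.
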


As in the previous sections, the proof of Theorem~\ref{th3} is based on the next three lemmas.
The first one is the Jackson-type theorem (see~\cite{DLY} for the case $0<p<1$ and~\cite{DiTo} for the case $p\ge 1$).

\begin{lemma}\label{lemJP}
Let $f\in L_p[-1,1]$, $0< p<\infty$, $k\in \N$, and $n>k$. Then
\begin{equation*}
%\label{eqJ}
  E_n(f)_p\le C\w_k^\vp\(f,\frac 1n\)_p,
\end{equation*}
where $C$ is a constant independent of $n$ and $f$.
\end{lemma}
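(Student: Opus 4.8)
The plan is to prove this direct (Jackson-type) estimate by a local-to-global polynomial construction. For $1\le p<\infty$ the inequality is classical and one would simply invoke~\cite{DiTo} (reduction to the periodic case via the Ditzian--Totik $K$-functional), so the substance is the range $0<p<1$, where the $K$-functional reduction is unavailable (the $K$-functional is not equivalent to the modulus in this range) and one must argue along the lines of~\cite{DLY}. Since in $L_p$, $0<p<1$, the quasi-norm obeys $\|u+v\|_p^p\le\|u\|_p^p+\|v\|_p^p$, and since $E_n(f)_p\le\|f-P_n\|_p$ for any $P_n\in\mathcal P_n$, it suffices to exhibit, for each $n>k$, a polynomial $P_n\in\mathcal P_n$ with $\|f-P_n\|_p^p\le C\,\w_k^\vp(f,1/n)_p^p$; this $P_n$ is built by piecing together local polynomial approximants.

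First I would set up the Chebyshev-type partition $x_j=\cos(j\pi/n)$, $I_j=[x_j,x_{j-1}]$ ($j=1,\dots,n$), and put $\vp_j=\vp\big(\tfrac12(x_j+x_{j-1})\big)$. On the interior blocks one has $\vp(x)\asymp\vp_j$ for $x\in I_j$ and $|I_j|\asymp\vp_j/n$, whereas the two extreme blocks satisfy $|I_1|,|I_n|\asymp n^{-2}$ with $\vp\lesssim n^{-1}$ there. On each interior $I_j$ let $q_j\in\mathcal P_{k-1}$ be a polynomial of best $L_p$-approximation of $f$ on $I_j$. Whitney's inequality --- valid for $0<p<1$ with a constant depending only on $k$ and $p$ --- gives $\|f-q_j\|_{L_p(I_j)}^p\le C\,\w_k(f,|I_j|)_{L_p(I_j)}^p$; and because $\vp$ is comparable on $I_j$ to the constant $n|I_j|$, a Ditzian--Totik difference $\Dl^k_{h\vp}f$ with $|h|\asymp1/n$ realizes ordinary increments of size $\asymp|I_j|$ on $I_j$, so $\w_k(f,|I_j|)_{L_p(I_j)}\le C\,\w_k^\vp(f,C/n)_p\asymp\w_k^\vp(f,1/n)_p$ (using the dilation property of $\w_k^\vp$). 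The extreme blocks are treated separately --- extending the polynomial from the neighbouring block and using $|I_1|,|I_n|\asymp n^{-2}$ together with an endpoint bound for $\w_k^\vp$. Summing the $p$-th powers over $j$ produces a piecewise polynomial $S=\sum_j q_j\chi_{I_j}$ with $\|f-S\|_p^p\le C\,\w_k^\vp(f,1/n)_p^p$.

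It remains to replace the piecewise polynomial $S$ by a genuine algebraic polynomial without spoiling the estimate. I would do this by gluing the local pieces with polynomial ``partition-of-unity'' factors $\psi_j$ of degree $\asymp n$, each $\approx1$ on $I_j$ and decaying rapidly off a neighbourhood of $I_j$ (normalized powers of suitable Chebyshev-type kernels), and setting $P_n=\sum_j q_j\psi_j$; after the standard rescaling (replacing $n$ by $\lfloor cn\rfloor$ and invoking the dilation property of $\w_k^\vp$) one may take $P_n\in\mathcal P_n$. The error $\|S-P_n\|_p$ is then controlled by $\|\cdot\|_p^p$-subadditivity, provided the overlapping tails of the $\psi_j$ are summable in $p$-th power against the local quantities and multiplication by $\psi_j$ does not damage the weighted bound.

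The main obstacle is exactly this gluing step in the range $0<p<1$: one cannot use a smooth (linear) partition of unity together with the triangle inequality as for $p\ge1$, so the polynomial cut-offs must be chosen so that (a) their tails are $p$-summable and (b) the products $q_j\psi_j$ still obey the weighted estimate; the degeneracy of $\vp$ and the $n^{-2}$ block length near $x=\pm1$ is a further delicate point. If the direct gluing turns out to be too cumbersome, the alternative I would use is the Ditzian--Totik substitution $x=\cos\theta$: it turns $E_n(f)_p$ into a weighted trigonometric best approximation and $\w_k^\vp(f,\cdot)_p$ into an ordinary trigonometric modulus against the weight $(\sin\theta)^{1/p}$, reducing the statement to a weighted periodic Jackson theorem in $L_p$, $0<p<1$ (the circle of ideas of~\cite{DLY} and~\cite{DHI}).
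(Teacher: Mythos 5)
The paper does not actually prove this lemma; it is quoted as a known result, with \cite{DLY} cited for $0<p<1$ and \cite{DiTo} for $p\ge 1$, so your sketch is attempting to reconstruct the DeVore--Leviatan--Yu argument. Your overall strategy (Chebyshev partition, local Whitney estimates, passage from a piecewise polynomial to an algebraic polynomial) is indeed the right one, but as written the proposal has a genuine gap precisely at the step you yourself flag: the replacement of $S=\sum_j q_j\chi_{I_j}$ by a polynomial $P_n\in\mathcal P_n$. Writing $P_n=\sum_j q_j\psi_j$ with polynomial ``partition-of-unity'' factors and appealing to $p$-summability of tails is not a proof for $0<p<1$; this is exactly where the linear smoothing argument that works for $p\ge1$ breaks down. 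The actual construction in \cite{DLY} (and in the Petrushev--Popov circle of ideas) proceeds by writing the piecewise polynomial in terms of truncated powers $(x-x_j)_+^{k-1}$, bounding the coefficients by local moduli, and approximating each truncated power by a polynomial of degree $\lesssim n$ with a pointwise error decaying like a high power of $\Delta_n(x_j)/(|x-x_j|+\Delta_n(x_j))$, where $\Delta_n(x)=n^{-1}\vp(x)+n^{-2}$; only then does the $\ell_p$-summation of the tails close. None of this is supplied or reduced to a quotable statement, and your fallback (the substitution $x=\cos\theta$ reducing the claim to a weighted periodic Jackson theorem with weight $(\sin\theta)^{1/p}$ for $p<1$) is likewise not an available off-the-shelf result, so it cannot serve as an escape route.

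A second, more local problem: in the Whitney step you bound each local modulus by the global Ditzian--Totik modulus, $\w_k(f,|I_j|)_{L_p(I_j)}\le C\,\w_k^\vp(f,C/n)_p$, and then ``sum the $p$-th powers over $j$''. Taken literally this yields $n\,\w_k^\vp(f,1/n)_p^p$, which is off by a factor $n$. What is needed is the summed estimate
\begin{equation*}
\sum_j \w_k\bigl(f,|I_j|\bigr)_{L_p(I_j)}^p \le C\,\w_k^\vp\Bigl(f,\frac{C}{n}\Bigr)_p^p,
\end{equation*}
which one obtains by keeping the local character of each term (e.g.\ via averaged/integral local moduli) rather than by majorizing each term globally first; the endpoint intervals, where $|I_j|\asymp n^{-2}$ and the increments of $\Dl_{h\vp}^k$ degenerate, need their own version of this bound. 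So while the roadmap matches the cited proof, the two decisive estimates are asserted rather than established, and the lemma cannot be considered proved from what you have written.
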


The following lemma was proved in~\cite{HuLi05}.

\begin{lemma}\label{lemNSP}
Let $0<p<1$, $r,n\in \N$, and $0<\d\le (Mn)^{-1}$. Then for any $P_n\in \mathcal{P}_n$ we have
$$
\w_r^\vp(P_n,\d)_p\asymp
\d^k\Omega_{r-k}^\vp(P_n^{(k)},\d)_{p,\vp^k}\asymp \d^r \Vert \vp^r
P_n^{(r)}\Vert_p,\quad 0\le k<r,
$$
where $\asymp$ is a two-sided inequality with positive constants independent of $\d$ and $P_n$ and $M$ is some constant depending only on $r$ and $p$.
\end{lemma}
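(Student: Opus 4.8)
The plan is to reduce the three-fold equivalence to a single weighted Stechkin--Nikolskii statement for polynomials and then apply it twice. Concretely, it suffices to prove, for a weight $w=\vp^\s$ with $\s\ge0$ and any order $s\in\N$, the two-sided bound
\[
\Omega_s^\vp(P_n,\d)_{p,w}\asymp \d^s\Vert w\,\vp^s P_n^{(s)}\Vert_p,\qquad 0<\d\le (Mn)^{-1},
\]
where for $w=w_0$ the main-part modulus $\Omega_s^\vp$ may be replaced by the full modulus $\w_s^\vp$, the two being equivalent for polynomials. Granting this, the middle quantity follows by applying the display to the polynomial $P_n^{(k)}$ with $s=r-k$ and $w=\vp^k$: since $\vp^k\vp^{r-k}=\vp^r$, one gets $\Omega_{r-k}^\vp(P_n^{(k)},\d)_{p,\vp^k}\asymp\d^{r-k}\Vert\vp^rP_n^{(r)}\Vert_p$, and multiplying by $\d^k$ yields the claimed chain; the leftmost term is the case $s=r$, $w=w_0$.

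\emph{Direct direction} (modulus $\lesssim$ derivative). Fix $x$ and expand the symmetric difference of $P_n$ in powers of its frozen step $h\vp(x)$. Since the central difference of order $s$ annihilates polynomials of degree below $s$, one has the exact finite identity
\[
\Dl_{h\vp(x)}^s P_n(x)=\sum_{l=s}^{n}\frac{\beta_l}{l!}\,(h\vp(x))^l\,P_n^{(l)}(x),\qquad \beta_l=\sum_{\nu=0}^s(-1)^\nu\binom s\nu\Bigl(\frac s2-\nu\Bigr)^l,
\]
with $\beta_l$ bounded, $\beta_l=0$ for $l<s$, and $\beta_s\ne0$ (the polynomial analogue of formula (2.6) in~\cite{HuLi05} used above). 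Taking the weighted $L_p$-quasinorm and using $p$-subadditivity over the finitely many terms gives $\Vert w\,\Dl_{h\vp}^sP_n\Vert_p^p\le C\sum_{l=s}^{n}h^{lp}\Vert w\,\vp^lP_n^{(l)}\Vert_p^p$. Now the weighted Bernstein inequality $\Vert w\vp^{l+1}P_n^{(l+1)}\Vert_p\le Cn\Vert w\vp^{l}P_n^{(l)}\Vert_p$ (valid for $0<p<\infty$; see~\cite{DiTo} and its $p<1$ refinements) yields $\Vert w\vp^lP_n^{(l)}\Vert_p\le (Cn)^{l-s}\Vert w\vp^sP_n^{(s)}\Vert_p$, so the $l$-th term is at most $(Chn)^{(l-s)p}h^{sp}\Vert w\vp^sP_n^{(s)}\Vert_p^p$. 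Since $hn\le 1/M$, choosing $M>C$ makes the geometric series converge to a constant, which proves $\Omega_s^\vp(P_n,\d)_{p,w}\le C\d^s\Vert w\vp^sP_n^{(s)}\Vert_p$. The constraint $\d\le(Mn)^{-1}$ with $M$ large is used exactly here, and it also guarantees that the increments $x\pm\frac s2h\vp(x)$ stay in $[-1,1]$ and that $\vp(x+th\vp(x))\asymp\vp(x)$ on the support.

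\emph{Inverse direction} (derivative $\lesssim$ modulus). Here the polynomial structure is essential, since for a general $f$ no such bound survives when $0<p<1$ (cf.\ Kopotun's example in the introduction). I would transfer the problem to the periodic setting via $x=\cos\theta$: the even trigonometric polynomial $g(\theta)=P_n(\cos\theta)\in\mathcal{T}_n$ satisfies $\vp^sP_n^{(s)}(\cos\theta)=$ a combination of $g^{(j)}(\theta)$, $j\le s$, with bounded coefficients plus lower-order polynomial terms, while the Ditzian--Totik difference $\Dl_{h\vp}^sP_n$ corresponds, up to the Jacobian $\sin\theta=\vp$ and an $O(h^2)$ error, to the ordinary difference $\Delta_h^s g(\theta)$. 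Applying the periodic Stechkin--Nikolskii inequality, Lemma~\ref{lemNST}, to $g$ gives $h^s\Vert g^{(s)}\Vert_{L_p(\T)}\le C\Vert\Delta_h^s g\Vert_{L_p(\T)}$; reading this back through the substitution and absorbing the lower-order terms by the weighted Bernstein inequality, together with the weight stability $\vp(x+th\vp(x))\asymp\vp(x)$, produces $\d^s\Vert w\vp^sP_n^{(s)}\Vert_p\le C\,\Omega_s^\vp(P_n,\d)_{p,w}$.

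The genuinely delicate point, and the main obstacle, is the inverse transference for $0<p<1$: making the change of variables $x=\cos\theta$ rigorous at the quasinorm level, controlling the discrepancy between $\Dl_{h\vp}^sP_n$ (with its endpoint cutoff and the restriction to $[-1+2s^2h^2,1-2s^2h^2]$ entering $\Omega$) and the periodic difference $\Delta_h^s g$, and absorbing the lower-order derivative terms uniformly in $n$. All of this rests on the two uniform facts that $\vp(x+th\vp(x))\asymp\vp(x)$ for $|t|\le s/2$ and $h\le(Mn)^{-1}$, and that the weighted Bernstein inequalities persist for $p<1$. The triangle inequality being unavailable, every splitting must be performed with $p$-subadditivity over a degree-limited (hence bounded) number of terms, which is precisely what keeps the constants independent of $\d$ and $P_n$. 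This is the $p<1$ counterpart of the Ditzian--Totik argument of~\cite{DiTo}, carried out in~\cite{HuLi05}.
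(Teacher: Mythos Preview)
The paper does not prove this lemma; it is stated with the attribution ``The following lemma was proved in~\cite{HuLi05}'' and used as a black box. So there is no ``paper's own proof'' to compare against beyond the citation.

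Your outline is broadly in the spirit of Hu--Liu's argument. The direct direction is unproblematic: the finite Taylor identity for the symmetric difference of a polynomial, combined with the weighted Bernstein inequality $\Vert w\vp^{l+1}P_n^{(l+1)}\Vert_p\le Cn\Vert w\vp^lP_n^{(l)}\Vert_p$ and the geometric summation under $hn\le 1/M$, gives the upper bound with constants independent of $\d$ and $P_n$; this works verbatim for $0<p<1$ via $p$-subadditivity over finitely many terms.

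The inverse direction as you sketch it has a real gap. Under $x=\cos\theta$ the measure picks up a factor $\sin\theta$, so the unweighted periodic norm $\Vert g^{(s)}\Vert_{L_p(\T)}$ you obtain from Lemma~\ref{lemNST} corresponds to $\int_0^\pi |\vp^s P_n^{(s)}(\cos\theta)|^p\,d\theta$ (plus lower-order terms), \emph{not} to $\Vert w\vp^s P_n^{(s)}\Vert_{L_p[-1,1]}^p=\int_0^\pi |w\vp^s P_n^{(s)}|^p\sin\theta\,d\theta$. The missing $\sin\theta$ (and the extra $w=\vp^\s$) is exactly a doubling-weight factor, and passing between these two integrals for polynomials of degree $n$ with $h\asymp n^{-1}$ requires a separate weighted argument (e.g., the Mastroianni--Totik doubling-weight Remez/Bernstein machinery, or the local polynomial inequalities on $[-1+2s^2h^2,1-2s^2h^2]$ that Hu--Liu actually use). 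Likewise, the ``$O(h^2)$ error'' between $\Dl_{h\vp}^s P_n$ and $\Delta_h^s g$ is not just a small remainder in $p<1$: it is a genuine perturbation of the nodes, and controlling it uniformly in $n$ needs the same polynomial extrapolation tools, not merely $p$-subadditivity. Your closing sentence (``carried out in~\cite{HuLi05}'') effectively defers these points back to the cited source, which is what the paper itself does; but as a self-contained proof the transference step is incomplete.
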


We also use the following Nikolskii type inequality for algebraic polynomials (see~\cite{DiTi05}).

\begin{lemma}\label{lemNikP}
Let  $0<p\le q<\infty$ and $r,n\in \N$. Then for any $P_n\in \mathcal{P}_{n}$ we have
$$
\|\vp^r P_n\|_q\leq Cn^{2\(\frac{1}{p}-\frac{1}{q}\)}\|\vp^r P_n\|_p\,,
$$
where  $C$ is a constant independent of $P_n$.
\end{lemma}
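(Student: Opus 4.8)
The plan is to reduce the statement to the extremal case $q=\infty$ and then to establish a sharp weighted $L_\infty$--$L_p$ inequality by localizing on the Chebyshev mesh. First I would dispose of the reduction, which is elementary. For $p<q<\infty$, writing $(\vp^r|P_n|)^q=(\vp^r|P_n|)^{q-p}(\vp^r|P_n|)^p$ and estimating the first factor by its supremum gives
\[
\|\vp^r P_n\|_q\le\|\vp^r P_n\|_\infty^{1-p/q}\,\|\vp^r P_n\|_p^{p/q}.
\]
Hence it suffices to prove the endpoint inequality $\|\vp^r P_n\|_\infty\le Cn^{2/p}\|\vp^r P_n\|_p$: substituting this bound produces the exponent $\tfrac2p\bigl(1-\tfrac pq\bigr)=2\bigl(\tfrac1p-\tfrac1q\bigr)$, as required.

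Next I would set up the Chebyshev partition. Let $x_j=\cos(j\pi/n)$ and $I_j=[x_j,x_{j-1}]$, $j=1,\dots,n$, with $I_j^\ast=I_{j-1}\cup I_j\cup I_{j+1}$. The lengths satisfy $|I_j|\asymp\d_n(\xi)$ for any $\xi\in I_j$, where $\d_n(x)=\vp(x)/n+n^{-2}$, so that $|I_j|\ge c\,n^{-2}$ throughout, the minimum being attained on the two extreme intervals. The heart of the matter is the local weighted Nikolskii estimate
\[
\sup_{x\in I_j}\vp^r(x)|P_n(x)|\le C\,|I_j|^{-1/p}\Bigl(\int_{I_j^\ast}\vp^{rp}|P_n|^p\,{\rm d}x\Bigr)^{1/p}.
\]
Granting this, since $|I_j|^{-1/p}\le Cn^{2/p}$ and $\int_{I_j^\ast}\vp^{rp}|P_n|^p\le\|\vp^r P_n\|_p^p$, taking the maximum over $j$ yields the $L_\infty$ bound immediately (no summation is needed, so overlaps of the $I_j^\ast$ are harmless). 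To prove the local estimate on a \emph{bulk} interval I would pass to $g(\theta)=P_n(\cos\theta)$, an even trigonometric polynomial of degree $n$, on the arc $J_j$ of length $\asymp 1/n$ corresponding to $I_j$, and apply the local trigonometric Nikolskii inequality $\|g\|_{L_\infty(J_j)}\le Cn^{1/p}\|g\|_{L_p(J_j^\ast)}$ (itself obtained by mapping the arc to a fixed interval and using that a degree-$n$ polynomial restricted to an interval of length $\asymp 1/n$ behaves like one of bounded degree). Transferring back via ${\rm d}x=\sin\theta\,{\rm d}\theta$ and the comparability $\vp=\sin\theta\asymp\mathrm{const}$ on $J_j$ converts $n^{1/p}$ into $|I_j|^{-1/p}$ and inserts the weight.

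I expect the \emph{two extreme intervals} $I_1=[\cos(\pi/n),1]$ and $I_n$ to be the main obstacle, because there $\vp$ ranges from $\asymp 1/n$ down to $0$ and cannot be frozen, while a naive rescaling of $I_1$ to a unit interval would turn $P_n$ into a genuine degree-$n$ polynomial and lose a spurious factor $n^{2/p}$. The correct mechanism is that on an interval of length $\asymp n^{-2}$ the polynomial $P_n$ has \emph{bounded effective degree}: Bernstein's inequality at $x=\cos(\pi/n)$, where $\vp\asymp 1/n$, forbids features finer than $n^{-2}$, so $P_n$ is slowly varying on $I_1$. Concretely I would substitute $s=1-x\asymp\vp^2$, so that $\vp^r\asymp s^{r/2}$ and $Q(s)=P_n(1-s)$; the desired estimate then becomes a one-interval weighted Nikolskii inequality for the Jacobi weight $s^{r/2}$ on $[0,\asymp n^{-2}]$, which holds precisely because $Q$ has bounded effective degree there, and the size of the right-hand side is calibrated by the doubling relation $\int_{I_1}\vp^{rp}\,{\rm d}x\asymp n^{-rp-2}$. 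Combining the bulk estimate with this endpoint analysis gives the local weighted Nikolskii inequality for all $j$, and hence, through the reduction above, the stated bound for every $q\in(p,\infty)$.
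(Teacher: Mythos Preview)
The paper does not give a proof of this lemma; it is simply quoted from Ditzian and Tikhonov~\cite{DiTi05}. So there is no in-paper argument to compare yours against.

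Your overall strategy---reduce to $q=\infty$ by the interpolation identity $\|\vp^r P_n\|_q\le\|\vp^r P_n\|_\infty^{1-p/q}\|\vp^r P_n\|_p^{p/q}$, then prove $\|\vp^r P_n\|_\infty\le Cn^{2/p}\|\vp^r P_n\|_p$ via a Chebyshev partition---is a standard and viable route. The reduction step and the bulk-interval estimate (where $\vp$ is essentially constant on each $I_j$, so the weight factors out and the local trigonometric Nikolskii inequality applies) are correct.

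The soft spot is your treatment of the two extreme intervals. The assertion that $P_n$ has ``bounded effective degree'' on $[\cos(\pi/n),1]$ is heuristic: the appeal to Bernstein's inequality at $x=\cos(\pi/n)$ already presupposes a weighted pointwise control of $P_n$ near the endpoint, which is essentially what you are trying to establish, so the argument as written is close to circular. Passing to $s=1-x$ does not by itself help---$Q(s)=P_n(1-s)$ is still a genuine degree-$n$ polynomial on $[0,\asymp n^{-2}]$, and the Jacobi-weighted Nikolskii inequality you invoke there needs its own justification (typically via a Remez or Christoffel-function argument), which you have not supplied.

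There is a far simpler way to close this gap and in fact to bypass the partition entirely. Since $r\in\N$, the function $(1-x^2)^rP_n(x)^2$ is an \emph{algebraic polynomial} of degree $2(n+r)$. Applying the \emph{unweighted} Nikolskii inequality for $\mathcal{P}_{2(n+r)}$ on $[-1,1]$ (see, e.g.,~\cite[Ch.~4, \S2]{DeLo}) with exponents $p/2\to\infty$ gives
\[
\|\vp^rP_n\|_\infty^2=\bigl\|(1-x^2)^rP_n^2\bigr\|_\infty\le C(n+r)^{4/p}\bigl\|(1-x^2)^rP_n^2\bigr\|_{p/2}=C(n+r)^{4/p}\|\vp^rP_n\|_p^{2},
\]
which is exactly the endpoint bound $\|\vp^rP_n\|_\infty\le Cn^{2/p}\|\vp^rP_n\|_p$, with no separate bulk/boundary analysis required. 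Feeding this into your interpolation step yields the lemma for all $q\in[p,\infty)$.
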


To prove Theorem~\ref{th3}, we also need analogues of inequalities
(\ref{eqM1}) and (\ref{eqM2}) for the Ditzian-Totik moduli of
smoothness. The first one is a standard inequality for
moduli of smoothness of different orders
\begin{equation}\label{eq.Sec3.M1}
       \w_k^\vp(f,\d)_p\le C\w_r^\vp(f,\d)_p\le C\Vert f\Vert_p,\quad k\ge r,\quad \d>0,
\end{equation}
(see, e.g.,~\cite[inequality (6.8)]{DHI}). Concerning inequality (\ref{eqM2}), it is only known  that
\begin{equation}\label{eq.*}
\w_r^{\vp}(f,\,2t)_p\leq C\w_r^{\vp}(f,t)_p
\end{equation}
(see~\cite[inequality (5.13)]{DHI}). This estimate is not suit for our purpose.
We will obtain shaper result given by the following lemma.

\begin{lemma}\label{lem.modP}
{\it Let $f\in L_p[-1,1]$, $0<p<\infty$, $r\in\N$, $\lambda>0$, and $\vp(x)=\sqrt{1-x^2}$. Then
\begin{equation}\label{eq.lem1.1}
\w_r^{\vp}(f,\,\lambda\d)_p\leq C(1+\lambda)^{r+2(\frac{1}{p_1}-1)}\w_r^{\vp}(f,\d)_p\,,
\end{equation}
where  $C$ is a constant independent of $f$, $\lambda$, and $\d$.
}
\end{lemma}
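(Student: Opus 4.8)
The plan is to reduce the estimate for arbitrary $\lambda$ to the already-known doubling estimate \eqref{eq.*}, but keeping careful track of the constant. The crude way of iterating \eqref{eq.*} $\lceil\log_2\lambda\rceil$ times gives a power of $\lambda$ with a bad (non-explicit, or at least $p$-independent but not sharp) exponent, so that is not enough. Instead I would argue directly from the definition of $\w_r^\vp$. Write $\lambda\delta = N\delta + \delta'$ or, more conveniently, pick $N=N(\lambda,r)\in\N$ comparable to $1+\lambda$ and decompose a step of size $t\le\lambda\delta$ into $N$ steps of size $t/N\le\delta$. The elementary identity
\begin{equation*}
\Dl_{h\vp(x)}^r f(x)=\sum_{j_1=0}^{N-1}\cdots\sum_{j_r=0}^{N-1}\Dl_{(h/N)\vp(x)}^r f\Bigl(x+\tfrac1N(j_1+\cdots+j_r-\tfrac{r(N-1)}{2})h\vp(x)\Bigr)
\end{equation*}
(the standard "telescoping of a large difference into small ones", which for the Ditzian--Totik symmetric difference holds with the argument shifted along the same $\vp$-direction) expresses a big difference as a sum of $N^r$ small ones at shifted points.

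\textbf{Key steps.} First I would establish the telescoping identity above (or its one-variable analogue $\D_h^r=\bigl(\sum_{j=0}^{N-1}\D_{h/N}^1 T_{jh/N}\bigr)^r$ adapted to the $\vp$-weighted step), taking care that all intermediate points stay in $[-1,1]$ so that none of the differences are set to $0$ by the truncation in the definition of $\Dl^r_{h\vp}$; this is where the restriction to the admissibility region matters. Second, I would apply the $p$-triangle inequality (the quasi-norm satisfies $\|\sum g_i\|_p^p\le\sum\|g_i\|_p^p$ for $p\le1$, and the ordinary triangle inequality for $p\ge1$), which turns the $N^r$-term sum into $N^{r}$ (resp. $N^{r/p_1}$ after taking $p_1$-th powers) copies of $\|\Dl_{(h/N)\vp}^r f(\cdot+\text{shift})\|_p$. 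Third, I would absorb the inner shift: a change of variables $x\mapsto x+c\,h\vp(x)$ is \emph{not} measure-preserving, but for $|h|\le\delta$ small the Jacobian $1+c h\vp'(x)$ is bounded above and below by absolute constants, and one checks that $\|\Dl^r_{u\vp}f(\cdot+c u\vp(\cdot))\|_{L_p}\le C\|\Dl^r_{u\vp}f\|_{L_p}$ — this is exactly the kind of estimate underlying \eqref{eq.*} in \cite{DHI}, so I would either cite it or reproduce the short computation. Finally, with $u=h/N\le\delta$ one gets $\|\Dl^r_{u\vp}f\|_p\le\w_r^\vp(f,\delta)_p$, and collecting the constants yields $\w_r^\vp(f,\lambda\delta)_p^{p_1}\le C\,N^{r}\w_r^\vp(f,\delta)_p^{p_1}$ with $N\asymp 1+\lambda$, i.e. exactly \eqref{eq.lem1.1} with exponent $r+2(\frac1{p_1}-1)$ once one is careful that each of the $r$ shift-changes of variables costs an extra factor $(1+\lambda)^{2(\frac1{p_1}-1)/r}$ from the Jacobian being raised to the power $1/p_1-1$ — actually the factor $2$ in the exponent comes from $\vp$ being a square root, so a single $\vp$-shift over the full region contributes $(1+\lambda)^{2}$ to the measure distortion, and its $(1/p_1-1)$-power gives the stated exponent.

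\textbf{Main obstacle.} The genuinely delicate point is the third step: controlling $\|\Dl^r_{u\vp}f(\cdot+c u\vp(\cdot))\|_{L_p[-1,1]}$ by $\|\Dl^r_{u\vp}f\|_{L_p}$ \emph{uniformly in} $c$ ranging over a set of size $\asymp\lambda$, because then the Jacobian of $x\mapsto x+cu\vp(x)$ is no longer close to $1$ (we have $cu$ possibly of order $\delta\lambda\asymp$ the original big step, not the small step). The right way around this is to push the shift \emph{before} subdividing: first use \eqref{eq.*}-type reasoning with the single big step of size $\le\lambda\delta$ but written as $\le(1+\lambda)\cdot\delta$ handled by one change of variables $x=\psi(y)$ mapping $[-1,1]$ into a slightly larger region where the differences become \emph{ordinary} (un-weighted) differences of $f\circ\psi$ with equal steps — this is the classical Ditzian--Totik device relating $\w_r^\vp(f,\cdot)$ to $\w_r$ of a transplanted function — and there the un-weighted inequality \eqref{eqM2} applies with its clean exponent $r+\frac1{p_1}-1$; the extra "$+\left(\frac1{p_1}-1\right)$" in \eqref{eq.lem1.1} then accounts for the distortion of the transplantation map near the endpoints, where $\vp$ vanishes like a square root. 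I expect the write-up to hinge on making this transplantation precise on the correct (shrunken) subinterval so that no boundary truncation is lost.
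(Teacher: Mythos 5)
Your step 1 (the subdivision identity) is essentially the identity the paper uses (it is the Petrushev--Popov formula, with the multiplicities $A^{(r)}_{\nu,N}$ of Lemma~\ref{lem2AAA}), and your remark about staying in the admissibility region is handled there too. The proposal breaks down at your step 3 and in the exponent bookkeeping. After the substitution $y=x+c\,u\vp(x)$ the $r$-th difference is taken with step $u\vp_1(y)$, where $\vp_1(y)=\vp(x)$ is only \emph{comparable} to $\vp(y)$, with comparability constants that deteriorate as $|c|$ grows (in the paper these are the quantities $\a_\nu$, ranging between $\asymp 1$ and $\asymp\sqrt n$); there is no pointwise estimate $\Vert\Dl^r_{u\vp}f(\cdot+cu\vp(\cdot))\Vert_p\le C\Vert\Dl^r_{u\vp}f\Vert_p$ with $C$ independent of $c$. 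The only available way to compare differences with comparable but unequal step-weights in $L_p$, $p<1$, is through the averaged characterization $\w_r^\vp(f,t)_p^p\asymp t^{-1}\int_0^t\int_{-1}^1|\Dl^r_{w\vp(x)}f(x)|^p\,{\rm d}x\,{\rm d}w$ together with the comparison inequality \eqref{eq.lem1.10} from Ditzian--Hristov--Ivanov; this is not a short computation to be cited in passing, it is the backbone of the paper's argument, and it necessarily brings in the $c$-dependent constants $\a_\nu/\a_{r(n-1)-\nu}$ that your plan ignores.

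Second, even granting the shift absorption, your counting gives $\w_r^\vp(f,\lambda\d)_p^{p_1}\le CN^{r}\w_r^\vp(f,\d)_p^{p_1}$, i.e. the exponent $r/p_1$, which for $p<1$ differs from $r+2(1/p_1-1)$ unless $r=2$ (for $r=1$ it would even be a stronger claim, which should make you doubt the absorption step), and your explanation of where the extra $2(1/p_1-1)$ comes from (a Jacobian ``raised to the power $1/p_1-1$'') is numerology rather than a derivation: Jacobians enter $L_p$ estimates to the first power. In the paper the sharp exponent is not obtained by counting differences at all. After the subdivision and change of variables one only reaches \eqref{eq.lem1.12}, which by itself (bounding $\w_r^\vp(f,\a_\nu t/\sqrt n)_p$ by $\w_r^\vp(f,t)_p$) yields the worse exponent $2r+2/p-2$. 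The decisive extra step, absent from your plan, is to insert the polynomial $P_N$ of best approximation with $1/N\asymp t$, estimate $f-P_N$ by Jackson's inequality (Lemma~\ref{lemJP}), and use the realization $\w_r^\vp(P_N,\d)_p\asymp\d^{r}\Vert\vp^{r}P_N^{(r)}\Vert_p$ (Lemma~\ref{lemNSP}), so that each term gains the factor $(\a_\nu/\sqrt n)^{rp}$; only then does the weighted sum $\sum_\nu(A^{(r)}_{\nu,n})^{p}\a_\nu^{1+rp}/\a_{r(n-1)-\nu}$ come out as $O(n^{1+p(r-1)})$ and give the exponent $r+2(1/p-1)$. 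Your fallback via transplantation to unweighted moduli and \eqref{eqM2} is the classical device for $p\ge1$ (which is exactly why the paper cites Ditzian--Totik only for that case) and is not available for $0<p<1$, so as stated it does not repair the gap.
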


This lemma, Lemma~\ref{lemJP}, and the inverse
inequality for  the best polynomial approximation (see~\cite{DJL}) allow
us to prove the next theorem about estimates from below for the
best polynomial approximation of functions in $L_p[-1,1]$,
$0<p<1$. To prove this theorem one can use the scheme of
proving the corresponding result in~\cite{Ko07}, see also the
proof of Theorem~\ref{thbelowS}. Note  that in the case $p\ge 1$,
the next theorem was obtained by Rathore~\cite{Rath94}.

\begin{theorem}\label{thCTP}
Let $f\in L_p[-1,1]$, $0<p<1$, and $s,n\in \N$. Then the following
assertions are equivalent:

\smallskip

\noindent $(i)$ for some $k>s+2/p-2$ there exist constants $M>0$ and
$\lambda>0$ such that for any $h\in (\lambda/n,1)$
$$
\w_s^\vp(f,h)_p\le M\w_k^\vp(f,h)_p,
$$
\noindent $(ii)$  there exists a constant $L>0$ independent of $f$
and $n$ such that
$$
\w_s^\vp\(f,\frac1n\)_p\le L E_n(f)_p.
$$
\end{theorem}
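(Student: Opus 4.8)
The plan is to reproduce, step by step, the argument used for Theorem~\ref{thbelowS} (equivalently, the scheme of~\cite{Ko07}), with the spline ingredients replaced by their algebraic--polynomial counterparts: the Jackson estimate Lemma~\ref{lemJP} in place of the spline Jackson theorem, the monotonicity-type bound Lemma~\ref{lem.modP} in place of~\eqref{eqM2}, and the weak-type inverse inequality for the best polynomial approximation from~\cite{DJL}, which for $0<p<1$ reads $\w_k^\vp(f,1/n)_p^p\le C n^{-kp}\sum_{\nu=1}^n\nu^{kp-1}E_\nu(f)_p^p$. Throughout $p_1=p$, so the exponent produced by Lemma~\ref{lem.modP} is $s+2(1/p-1)=s+2/p-2$.

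The implication $(ii)\Rightarrow(i)$ is, as in the spline case, immediate from Jackson's inequality: for $h\in(\l/n,1)$ set $N=\lceil 1/h\rceil$; then by Lemma~\ref{lem.modP} and monotonicity of the moduli, $\w_s^\vp(f,h)_p\le C\w_s^\vp(f,1/N)_p$, by $(ii)$ this is $\le CLE_N(f)_p$, by Lemma~\ref{lemJP} it is $\le CL\w_k^\vp(f,1/N)_p$, and finally $\le CL\w_k^\vp(f,h)_p$ since $1/N\le h$, which is $(i)$ with $M=CL$.

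For $(i)\Rightarrow(ii)$, one first combines $(i)$ with~\eqref{eq.Sec3.M1} and Lemma~\ref{lem.modP} to obtain, on the relevant range of $h$, the analogue of~\eqref{rathoreqiP}: $\w_k^\vp(f,\l h)_p\le CM(1+\l)^{s+2/p-2}\w_k^\vp(f,h)_p$. Substituting this into Lemma~\ref{lemJP} exactly as in~\eqref{rathInvP} gives $n^{-kp}\sum_{\nu=1}^n\nu^{kp-1}E_\nu(f)_p^p\le CM^p\w_k^\vp(f,1/n)_p^p$; the series that has to be summed is $\sum_{\nu\le n}\nu^{(k-s)p+2p-3}$, and it is $\asymp n^{(k-s)p+2p-2}$ precisely because $k>s+2/p-2$ --- this is the one place where the hypothesis on $k$ is used. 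Next, applying the inverse inequality from~\cite{DJL} at level $mn$, splitting the sum at $\nu=n$ and estimating the part with $\nu\le n$ by the previous step, one obtains, as in~\eqref{eqRathorXXXP}--\eqref{XXX},
\[
  \sum_{\nu=n+1}^{mn}\nu^{kp-1}E_\nu(f)_p^p\ \ge\ \frac{(mn)^{kp}}{C}\,\w_k^\vp\(f,\frac{1}{mn}\)_p^p\ -\ M^p n^{kp}\,\w_k^\vp\(f,\frac1n\)_p^p.
\]

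Finally, bound the left-hand side from above by $E_n(f)_p^p\sum_{\nu=n+1}^{mn}\nu^{kp-1}\le C\,(mn)^{kp}E_n(f)_p^p$ using the monotonicity of $E_\nu(f)_p$, and bound $\w_k^\vp(f,1/(mn))_p$ from below by $c\,m^{-(s+2/p-2)}\w_k^\vp(f,1/n)_p$ using $(i)$, \eqref{eq.Sec3.M1}, and Lemma~\ref{lem.modP}. This yields $E_n(f)_p^p\ge c\,m^{-kp}\(c'm^{(k-s)p+2p-2}-M^p\)\w_k^\vp(f,1/n)_p^p$, and since $(k-s)p+2p-2>0$ one fixes $m$ so large that the bracket is positive; then $E_n(f)_p\ge c\,\w_k^\vp(f,1/n)_p$, which together with $(i)$ gives $(ii)$. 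The only genuinely delicate point is the exponent bookkeeping, arranged so that the convergence of the relevant geometric-type series is governed exactly by $k>s+2/p-2$; the remaining Ditzian--Totik technicalities (the moduli acting only on a shrinking subinterval, and the need, when $\l\ge1$, to pass from $n$ to a comparable integer) are handled as in the corresponding parts of the proofs of Theorems~\ref{thbelowS} and~\ref{th3}.
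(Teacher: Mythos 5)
Your proposal follows exactly the route the paper indicates for Theorem~\ref{thCTP}: it repeats the scheme of the proof of Theorem~\ref{thbelowS} (i.e.\ of~\cite{Ko07}), replacing the spline tools by Lemma~\ref{lemJP}, Lemma~\ref{lem.modP} (which indeed yields the exponent $s+2/p-2$) and the inverse inequality of~\cite{DJL}, and your exponent bookkeeping (the sum $\sum_{\nu\le n}\nu^{(k-s)p+2p-3}\asymp n^{(k-s)p+2p-2}$ and the choice of $m$ making the bracket positive) is correct. Since the paper supplies no further details beyond this prescription, your argument is essentially the paper's intended proof.
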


To prove Lemma~\ref{lem.modP}, we need the following technical result.

\begin{lemma}\label{lem2AAA}
{\it
Let $r,n\in \N$ and let the numbers $A_{\nu,n}^{(r)}$, $0\leq \nu\leq r(n-1)$, satisfy
\begin{equation*}
%\label{eq.lem2.0}
    \(1+t+t^{2}+\ldots+t^{n-1}\)^{r}=\sum_{\nu=0}^{(n-1)r}A_{\nu,n}^{(r)}\,t^{\nu}\,,\quad t\in\R.
\end{equation*}
Then for any $0\leq\nu\leq (n-1)r$ we have
\begin{equation}\label{eq.lem2.1}
   A_{\nu,n}^{(r)}=A_{r(n-1)-\nu,n}^{(r)}\,,
\end{equation}
%and
\begin{equation}\label{eq.lem2.2}
   0<A_{\nu,n}^{(r)}\leq C(\nu+1)^{r-1}\,,
\end{equation}
where the constant $C$ depends only on $k$.
}
\end{lemma}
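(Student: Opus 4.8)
The plan is to read $A_{\nu,n}^{(r)}$ combinatorially and then use only elementary counting. Multiplying out the $r$ factors, $A_{\nu,n}^{(r)}$ is precisely the number of tuples $(\nu_1,\dots,\nu_r)$ with each $\nu_j\in\{0,1,\dots,n-1\}$ and $\nu_1+\dots+\nu_r=\nu$. All three assertions will follow from this description, so the lemma is self-contained and does not rely on anything earlier in the paper.

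For the symmetry (\ref{eq.lem2.1}) I would argue that $Q(t):=(1+t+\dots+t^{n-1})^r$ is palindromic of degree $(n-1)r$. Indeed $1+t+\dots+t^{n-1}=t^{\,n-1}(1+t^{-1}+\dots+t^{-(n-1)})$, so $t^{(n-1)r}Q(1/t)=Q(t)$; comparing the coefficient of $t^\nu$ on the two sides gives $A_{\nu,n}^{(r)}=A_{(n-1)r-\nu,n}^{(r)}$. Equivalently, $(\nu_1,\dots,\nu_r)\mapsto(n-1-\nu_1,\dots,n-1-\nu_r)$ is a bijection between the tuples counted by $A_{\nu,n}^{(r)}$ and those counted by $A_{(n-1)r-\nu,n}^{(r)}$.

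For the lower bound in (\ref{eq.lem2.2}) I would exhibit a tuple: given $0\le\nu\le(n-1)r$, fill the slots greedily, $\nu_1=\min(\nu,n-1)$, $\nu_2=\min(\nu-\nu_1,n-1)$, and so on; since $\nu\le(n-1)r$ the leftover after $r$ steps is $0$, so this is a valid tuple and $A_{\nu,n}^{(r)}\ge1>0$. For the upper bound I would drop the constraint $\nu_j\le n-1$ and count all tuples of nonnegative integers summing to $\nu$, which is $\binom{\nu+r-1}{r-1}$; hence
$$
A_{\nu,n}^{(r)}\le\binom{\nu+r-1}{r-1}=\frac{(\nu+1)(\nu+2)\cdots(\nu+r-1)}{(r-1)!}\le\frac{r^{r-1}}{(r-1)!}\,(\nu+1)^{r-1},
$$
using $\nu+j\le r(\nu+1)$ for $1\le j\le r-1$. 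This gives (\ref{eq.lem2.2}) with $C=r^{r-1}/(r-1)!$, depending only on $r$.

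There is essentially no obstacle here; the only points to watch are that every exponent in $[0,(n-1)r]$ actually occurs (needed for strict positivity, handled by the greedy construction) and that the constant in the upper bound is independent of $n$ (clear, since the discarded constraint was the only source of $n$-dependence). If a sharper estimate in terms of $\min(\nu,(n-1)r-\nu)$ were needed elsewhere, one would combine the crude bound with the symmetry (\ref{eq.lem2.1}); but for the stated lemma that refinement is unnecessary.
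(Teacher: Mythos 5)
Your proof is correct. For the symmetry \eqref{eq.lem2.1} you use exactly the paper's device (the substitution $t\mapsto 1/t$, i.e.\ palindromicity of $(1+t+\dots+t^{n-1})^r$), so there is nothing to compare there. For \eqref{eq.lem2.2} the paper does not argue at all: it simply cites \cite{PePo}, p.~187, whereas you give a self-contained elementary proof via the combinatorial reading of $A_{\nu,n}^{(r)}$ as the number of tuples $(\nu_1,\dots,\nu_r)\in\{0,\dots,n-1\}^r$ with sum $\nu$: positivity by the greedy construction (which indeed needs $\nu\le (n-1)r$, and you check this), and the upper bound by dropping the cap $\nu_j\le n-1$ to get $A_{\nu,n}^{(r)}\le\binom{\nu+r-1}{r-1}\le \frac{r^{r-1}}{(r-1)!}(\nu+1)^{r-1}$, with a constant visibly independent of $n$. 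All estimates check out ($\nu+j\le r(\nu+1)$ for $1\le j\le r-1$ is immediate), and your explicit constant also clarifies that the ``depends only on $k$'' in the statement should read ``depends only on $r$''. What your route buys is independence from the external reference and an explicit constant; what the paper's citation buys is brevity. Either way the lemma stands.
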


\begin{proof}
Equality (\ref{eq.lem2.1}) can be verified by using the substitute $t\rightarrow {1}/{t}$. The proof of~(\ref{eq.lem2.2}) can be found in~\cite[p.~187]{PePo}.

%Let us prove assertion (\ref{eq.lem2.2}). By using the equalities
%\begin{equation*}
%\begin{split}
%\(1+t+t^{2}+\ldots+t^{n-1}\)^{k+1}&=\(\sum_{\nu=0}^{(n-1)k}A_{\nu,n}^{(k)}\,t^{\nu}\)\(1+t+t^{2}+\ldots+t^{n-1}\)\\
%&=\sum_{\nu=0}^{(n-1)(k+1)}A_{\nu,n}^{(k+1)}\,t^{\nu}\,,
%\end{split}
%\end{equation*}
%we obtain the following recurrence formulas
%\begin{equation}\label{eq.lem2.3}
%   A_{\nu,n}^{(k+1)}=\sum_{\mu=\nu-n+1}^{\nu}A_{\mu,n}^{(k)}\,, \quad 0\leq\nu\leq(k+1)(n-1)\,,
%\end{equation}
%where $A_{\mu,n}^{(k)}=0$ if $\mu<0$ or $\mu>k(n-1)$.
%
%Thus, supposing that $A_{\mu,n}^{(k)}\leq C(\mu+1)^{k-1}$, we get from (\ref{eq.lem2.3}) that $A_{\mu,n}^{(k+1)}\leq C(\mu+1)^{k}$. Therefore, by induction we obtain inequalities (\ref{eq.lem2.2}).
%
%The lemma is proved.

\end{proof}

\begin{proof}\emph{of\,\, Lemma~\ref{lem.modP}\,\,}
In the case $1\leq p<\infty$, the proof of the lemma see in \cite[p.~38]{DiTo}.
Let us consider the case $0<p<1$. In what follows, we use some ideas from~\cite{DHI}.

First let us prove
\begin{equation}\label{eq.lem1.2}
\begin{split}
&\int_0^{\sqrt{n}t}\int_{-1}^1
|\bar{\D}_{u\vp(x)}^rf(x)|^p {\rm d}x{\rm d}u\\
&\leq\frac{2n^2}{n+1}\sum_{\nu=0}^{(n-1)r}\(A_{\nu,n}^{(r)}\)^p\frac{1}{\a_{r(n-1)-\nu}}
\int_0^{\a_\nu t/\sqrt{n}}\int_{-1}^1
|\bar{\D}_{u\vp(x)}^rf(x)|^p {\rm d}x{\rm d}u\,,
\end{split}
\end{equation}
where $A_{\nu,n}^{(r)}$, $0\leq \nu \leq r(n-1)$, are defined in
Lemma~\ref{lem2AAA} and
$$\a_\nu=\a_{\nu,n}^{(r)}=\(\frac{rn}{2\(r(n-1)-\nu\)+r}\)^{1/2}\,,
\quad 0 \leq \nu \leq r(n-1)\,.$$
Denote
$$
D_n(r,w)=\left\{x:x\pm\frac{rw n\vp (x)}{2}\in(-1,\,1)\right\}\,.
$$
By using the following equality (see \cite[p.~187]{PePo})
$$
\bar{\D}_{nw\vp(x)}^rf(x)=\sum_{\nu=0}^{(n-1)r}A_{\nu,n}^{(r)}\,\bar{\Delta}_{w\vp(x)}^rf\(x+\(\nu-\frac{r(n-1)}{2}\)w\vp(x)\)\,, \quad x\in D_n(r,w)\,,
$$
we obtain
\begin{equation}\label{eq.lem1.3}
\begin{split}
&\int_0^{\sqrt{n}t}\int_{-1}^1
|\bar{\D}_{u\vp(x)}^rf(x)|^p {\rm d}x{\rm d}u
=n\int_0^{t/\sqrt{n}}\int_{D_n(r,w)}
|\bar{\D}_{nw\vp(x)}^rf(x)|^p {\rm d}x{\rm d}w\\
&\leq n\sum_{\nu=0}^{(n-1)r}\(A_{\nu,n}^{(r)}\)^p\int_0^{t/\sqrt{n}}\int_{D_n(r,w)}
\left|\bar{\D}_{w\vp(x)}^rf\(x+\(\frac{r(n-1)}{2}-\nu\)w\vp(x)\)\right|^p {\rm d}x{\rm d}w\\
&=n\sum_{\nu=0}^{(n-1)r}\(A_{\nu,n}^{(r)}\)^pI_{\nu,n}\,.
\end{split}
\end{equation}

Let us estimate $I_{\nu,n}$, $0\leq\nu\leq(n-1)r$. We set
$$y=x+\(\frac{r(n-1)}{2}-\nu\)w\vp(x)\,.$$
With no loss of generality, we can assume that
\begin{equation}\label{eq.lem1.4}
\frac{r(n-1)}{2}-\nu \leq 0
\end{equation}
(the case $\frac{r(n-1)}{2}-\nu\geq 0$ is symmetric).
In view of $x\pm{rw n\vp (x)}/{2}\in(-1,\,1)$ and (\ref{eq.lem1.4}), we have
$w \vp(x)\leq {2}(1+x)/({rn})$ and $1-y\geq1-x$. Thus,
\begin{equation}\label{eq.lem1.5}
\begin{split}
1-y^2=(1+y)(1-y)&\geq\(1+x+\(\frac{r(n-1)}{2}-\nu\)\frac{2}{rn}(1+x)\)(1-x)\\
&=\frac{2\(r(n-1)-\nu\)+r}{rn}(1-x^2)\,.
\end{split}
\end{equation}
By analogy, using the inequalities $w \vp(x)\leq{2}(1-x)/({rn})$ and $1+y\leq1+x$,
we have
\begin{equation}\label{eq.lem1.6}
1-y^2\leq\(\frac{r+2\nu}{rn}\)(1-x^2)\,.
\end{equation}
Now, denoting $\vp_1(y)=\vp(x)$,  we get from (\ref{eq.lem1.5}) and (\ref{eq.lem1.6}) that
\begin{equation}\label{eq.lem1.7}
\a_{r(n-1)-\nu}\vp(y)\leq\vp_1(y)\leq\a_\nu \vp(y)
\end{equation}
and
\begin{equation}\label{eq.lem1.8}
I_{\nu,n}=\int_0^{t/\sqrt{n}}\int_{S_n(w,r,\nu)}
|\bar{\D}_{w\vp_1(y)}^rf(y)|^p \frac{{\rm d}x}{{\rm d}y}{\rm d}y{\rm d}w\,,
\end{equation}
where
$$
S_n(w,r,\nu)=\left\{y:y=x+\(\frac{r(n-1)}{2}-\nu\)w\vp(x),\,x\pm\frac{r\nu n}{2}\vp(x)\in(-1,1)\right\}.
$$

It is easy to see that
$$
\frac{{\rm d}y}{{\rm d}x}=1-\(\frac{r(n-1)}{2}-\nu\)w\frac{x}{\vp(x)}\,.
$$
Tacking into account (\ref{eq.lem1.4}), we have for $x\geq0$ that ${{\rm d}y}/{{\rm d}x}\geq1$. For $x<0$, using inequality $w \vp(x)\leq {2}(1+x)/({rn})$, we get
\begin{equation*}
\begin{split}
\frac{{\rm d}y}{{\rm d}x}&\geq1+\frac{r(n-1)}{2}w\frac{x}{\vp(x)}=1+\frac{r(n-1)}{2}w\vp(x)\frac{x}{1-x^2}\\
&\geq1+\frac{(n-1)(1+x)x}{n(1-x^2)}=1+\frac{n-1}{n}\cdot\frac{x}{1-x}\geq\frac{n+1}{2n}\,.
\end{split}
\end{equation*}
Hence, for $y\in S_n(w,r,\nu)$, we have ${{\rm d}x}/{{\rm d}y}\leq{2n}/({n+1})$ and
\begin{equation}\label{eq.lem1.9}
I_{\nu,n}\leq\frac{2n}{n+1}\int_0^{t/\sqrt{n}}\int_{-1}^1
|\bar{\Delta}_{w\vp_1(y)}^rf(y)|^p {\rm d}y{\rm d}w\,.
\end{equation}
In \cite{DHI}, it was proved that for $0<B^{-1}\psi(x)\leq\psi_1(x)\leq A\psi(x)$ and $0<p<\infty$ one has
\begin{equation}\label{eq.lem1.10}
\int_0^{t}\int_{-1}^1
|\bar{\D}_{w\psi_1(x)}^rf(x)|^p {\rm d}x{\rm d}w\leq B\int_0^{At}\int_{-1}^1
|\bar{\D}_{u\psi(x)}^rf(x)|^p {\rm d}x{\rm d}u\,.
\end{equation}
Using (\ref{eq.lem1.7}), (\ref{eq.lem1.10}), and (\ref{eq.lem1.9}), we obtain
\begin{equation}\label{eq.lem1.11}
I_{\nu,n}\leq\frac{2n}{(n+1)\a_{r(n-1)-\nu}}\int_0^{{\a_\nu t}/{\sqrt{n}}}\int_{-1}^1
|\bar{\Delta}_{u\vp(x)}^rf(x)|^p {\rm d}x{\rm d}u\,.
\end{equation}
Thus, combining (\ref{eq.lem1.3}) and (\ref{eq.lem1.11}), we get (\ref{eq.lem1.2}).

Next, using  (\ref{eq.lem1.2}) and  the
following two-sided inequality with positive constants independent of $f$ and $t$ (see \cite[Corollary 5.5]{DHI})
$$
\w_r^\vp(f,t)_p^p\asymp\frac{1}{t}\int_0^{t}\int_{-1}^1
|\bar{\D}_{w\vp(x)}^rf(x)|^p {\rm d}x{\rm d}w\,,
$$
we derive
\begin{equation}\label{eq.lem1.12}
\begin{split}
\w_r^\vp(f,\sqrt{n}t)_p^p&\leq\frac{C}{\sqrt{n}t}\int_0^{\sqrt{n}t}\int_{-1}^1
|\bar{\D}_{w\vp(x)}^rf(x)|^p {\rm d}x{\rm d}w\\
&\leq C\sqrt{n}\sum_{\nu=0}^{r(n-1)}\(A_{\nu,n}^{(r)}\)^p\frac{1}{\a_{r(n-1)-\nu}t}
\int_0^{\a_\nu t/\sqrt{n}}\int_{-1}^1
|\bar{\D}_{w\vp(x)}^rf(x)|^p {\rm d}x{\rm d}w\\
&\leq C\sum_{\nu=0}^{r(n-1)}{\(A_{\nu,n}^{(r)}\)^p}\frac{\a_\nu}{\a_{r(n-1)-\nu}}\w_r^\vp\(f,\frac{\a_\nu}{\sqrt{n}}t\)_p^p\,.
\end{split}
\end{equation}

Let $P_N\in\mathcal{P}_{N-1}$, $N\in\N$, be such that $\Vert f-P_N\Vert_p=E_N(f)_p$. Let us choose $N\in\N$ such that ${t}/{2}<N^{-1}\leq t$. By (\ref{eq.lem1.12}), Lemmas~\ref{lemJP} and~\ref{lemNSP}, inequalities \eqref{eq.Sec3.M1} and (\ref{eq.*}), and
$$
\frac{\a_\nu}{\sqrt{n}}=\(\frac{r}{2\(r(n-1)-\nu\)+r}\)^{1/2}\le 1\,,
$$
we obtain
\begin{equation}\label{eq.lem1.13}
\begin{split}
\w_r^\vp(f,\sqrt{n}t)_p^p&\leq C \Vert f-P_N\Vert_p^p+\w_r^\vp(P_N,\sqrt{n}t)_p^p\\
&\leq C\w_r^\vp\(f,\frac{1}{N}\)_p^p+C\sum_{\nu=0}^{r(n-1)}\(A_{\nu,n}^{(r)}\)^p\frac{\a_\nu}{\a_{r(n-1)-\nu}}\w_r^\vp\(P_N,\frac{\a_\nu}{\sqrt{n}}t\)_p^p\\
&\leq C\w_r^\vp(f,t)_p^p+{C}{n^{-\frac{rp}{2}}}\sum_{\nu=0}^{r(n-1)}\(A_{\nu,n}^{(r)}\)^p\frac{\a_\nu^{1+rp} t^{rp}}{\a_{r(n-1)-\nu}}\Vert\vp^r P_N^{(r)}\Vert_p^p\\
&\leq C\w_r^\vp(f,t)_p^p+{C}{n^{-\frac{rp}{2}}}\sum_{\nu=0}^{r(n-1)}{\(A_{\nu,n}^{(r)}\)^p}
\frac{\a_\nu^{1+rp}}{\a_{r(n-1)-\nu}}\w_r^\vp\(P_N,\frac{1}{N}\)_p^p\\
&\leq C\w_r^\vp(f,t)_p^p\\
&\quad\quad+{C}{n^{-\frac{rp}{2}}}\sum_{\nu=0}^{r(n-1)}{\(A_{\nu,n}^{(r)}\)^p}
\frac{\a_\nu^{1+rp}}{\a_{r(n-1)-\nu}}\(\Vert f-P_N\Vert_p^p+\w_r^\vp\(f,\frac{1}{N}\)_p^p\)\\
&\leq C\( 1+{n^{-\frac{rp}{2}}}\sum_{\nu=0}^{r(n-1)}{\(A_{\nu,n}^{(r)}\)^p}\frac{\a_\nu^{1+rp}}{\a_{r(n-1)-\nu}}\)\w_r^\vp(f,t)_p^p\,.
\end{split}
\end{equation}
Using inequalities (\ref{eq.lem2.1}) and (\ref{eq.lem2.2}), it is easy to verify that
\begin{equation}\label{eq.lem1.14}
\begin{split}
\sum_{\nu=0}^{r(n-1)}\(A_{\nu,n}^{(r)}\)^p\frac{\a_\nu^{1+rp}}{\a_{r(n-1)-\nu}}&\leq Cn^{1+p(r-1)}\,.
\end{split}
\end{equation}
Thus, by (\ref{eq.lem1.13}) and (\ref{eq.lem1.14}), we have
$
\w_r^\vp(f,\sqrt{n}t)_p\leq Cn^{{1}/{p}-1+{r}/{2}}\w_r^\vp(f,t)_p\,,
$
which implies that
$
\w_r^\vp(f,{n}t)_p\leq Cn^{2({1}/{p}-1)+r}\w_r^\vp(f,t)_p
$
and, therefore, (\ref{eq.lem1.1}) is true.

The lemma is proved.
\end{proof}

Now we are ready to prove Theorem~\ref{th3}. Actually, this theorem can be
proved by using Lemmas~\ref{lemJP}--\ref{lem.modP} and repeating the proof of Theorem~\ref{thsec3.1}. Nevertheless, since we have some changes in the proof,  we would like to present a detailed proof of the theorem.

%\begin{proof}[The proof of Theorem~\ref{th3}]
%Can be proved by using the scheme of the proving of
%Theorem~\ref{th1T} and Lemmas~\ref{lemJP}-\ref{lem.modP}.
%\end{proof}

\begin{proof}\emph{of\,\, Theorem~\ref{th3}\,\,\,}
Let $P_{n}\in \mathcal{P}_{n}$,  $n\in \N$, be such that
\begin{equation}\label{++++}
  \Vert \vp^r (f^{(r)}-P_{n}^{(r)})\Vert_p=E_{n-r}(f^{(r)})_{p,\vp^r}.
\end{equation}
%$$
%\Vert \vp^r (f^{(r)}-P_{n}^{(r)})\Vert_p=E_{n-r}(f^{(r)})_{p,\vp^r}.
%$$
Let us choose $m\in \N$  such that
$2^{-(m+1)}\le n< 2^{m}$.
We have
\begin{equation}\label{eq.th3.2}
\begin{split}
 E_n(f)_p^p&\le E_n(P_{2^m})_p^p+E_n(f-P_{2^m})_p^p.
\end{split}
\end{equation}
By Lemmas~\ref{lemJP} and~\ref{lemNSP}, we obtain
\begin{equation}\label{eq.th3.3}
\begin{split}
E_n(P_{2^m})_p&=E_n(P_{2^m}-P_n)_p\le C\w_r^\vp(P_{2^m}-P_n,n^{-1})_p\\
&\le Cn^{-r}\Vert \vp^r(P_{2^m}^{(r)}-P_n^{(r)})\Vert_p\le Cn^{-r}E_n(f^{(r)})_{p,\vp^r}.
\end{split}
\end{equation}

Let us show that
\begin{equation}\label{eq.th3.6}
\begin{split}
E_n(f-P_{2^m})_p^p\le \sum_{\nu=m}^\infty
E_n(P_{2^{\nu+1}}-P_{2^\nu})_p^p.
\end{split}
\end{equation}
Indeed, for any $N>m$ we have
\begin{equation*}
%\label{eq.th3.7}
\begin{split}
E_n(f-P_{2^m})_p^p\le \sum_{\nu=m}^{N-1}
E_n(P_{2^{\nu+1}}-P_{2^\nu})_p^p+E_n(f-P_{2^{N}})_p^p\,.
\end{split}
\end{equation*}
Thus, to prove (\ref{eq.th3.6}), one needs only to verify that
\begin{equation}\label{eq.th3.8}
\begin{split}
E_n(f-P_{2^{N}})_p\to 0\quad\text{as}\quad N\to
\infty.
\end{split}
\end{equation}
%For this we need the following estimate
%\begin{equation}\label{eq.th3.9}
%\begin{split}
%\w_r^\vp (f,\d)_1\le C \d^r \Vert \vp^r f^{(r)}\Vert_1,\quad 0<\d\le
%\d_0,
%\end{split}
%\end{equation}
%which follows form Theorem~2.1.1 in~\cite{DiTo}.
Using Lemma~\ref{lemJP}, H\"older's inequality, and
the following estimate (see, e.g., \cite[Theorem~2.1.1]{DiTo})
\begin{equation*}\label{eq.th3.9}
\begin{split}
\w_r^\vp (f,\d)_1\le C_r \d^r \Vert \vp^r f^{(r)}\Vert_1,\quad 0<\d\le
\d_0,
\end{split}
\end{equation*}
we get
\begin{equation}\label{eq.th3.10}
\begin{split}
E_n(f-P_{2^{N}})_p&\le C\w_r^\vp (f-P_{2^{N}},n^{-1})_p\le C\w_r^\vp (f-P_{2^{N}},n^{-1})_1\\
&\le Cn^{-r}\Vert \vp^r
(f-P_{2^{N}})^{(r)}\Vert_1.
\end{split}
\end{equation}
By Lemma~\ref{lemNikP}, we obtain
\begin{equation*}
\begin{split}
\sum_{\nu=m}^\infty \Vert \vp^r (P_{2^{\nu+1}}^{(r)}-P_{2^{\nu}}^{(r)})\Vert_{1}^p
&\le C\sum_{\nu=m}^\infty 2^{2(1-p)\nu}\Vert \vp^r (P_{2^{\nu+1}}^{(r)}-P_{2^{\nu}}^{(r)})\Vert_{p}^p\\
&\le C\sum_{\nu=m}^\infty
2^{2(1-p)\nu}E_{2^\nu-r}(f^{(r)})_{p,\vp^r}^p.
\end{split}
\end{equation*}
In view of~\eqref{eq.th3.0}, the last inequality implies that  $\{\vp^r P_{2^\nu}^{(r)}\}$ is
convergent in $L_1[-1,1]$. At the same time, by~\eqref{++++}, we have $\vp^r P_{2^\nu}^{(r)}\to \vp^r
f^{(r)}$ as $\nu\to\infty$ in $L_p[-1,1]$. Thus, $\vp^r
P_{2^\nu}^{(r)}\to \vp^r f$ as $\nu\to\infty$  in $L_1[-1,1]$, too.
From this and (\ref{eq.th3.10}), we obtain (\ref{eq.th3.8}) and,
therefore, \eqref{eq.th3.6}.

Next, using Lemmas~\ref{lemJP},~\ref{lem.modP}, and~\ref{lemNSP}, we derive
\begin{equation}\label{eq.th3.11}
\begin{split}
E_n&(P_{2^{\nu+1}}-P_{2^\nu})_p\le C\w_{r}^\vp(P_{2^{\nu+1}}-P_{2^\nu},n^{-1})_p\\
&=C \w_{r}^\vp(P_{2^{\nu+1}}-P_{2^\nu},M2^{\nu+1}n^{-1}(M2^{\nu+1})^{-1})_p\\
&\le C n^{-r-2/p+2}2^{\nu(r+2/p-2)}\w_{r}^\vp(P_{2^{\nu+1}}-P_{2^\nu},(M2^{\nu+1})^{-1})_p\\
&\le Cn^{-r-2/p+2}  2^{(2/p-2)\nu}\Vert \vp^r (P_{2^{\nu+1}}-P_{2^\nu})^{(r)}\Vert_{p}\\
&\le Cn^{-r-2/p+2}
2^{(2/p-2)\nu}E_{2^\nu-r}(f^{(r)})_{p,\vp^r}.
\end{split}
\end{equation}

Finally, combining (\ref{eq.th3.2}), (\ref{eq.th3.3}), (\ref{eq.th3.6}), and (\ref{eq.th3.11}), we obtain
(\ref{eq.th3.1}).

The theorem is proved.

\end{proof}

Now let us consider analogues of Theorems~\ref{thsec3.A}
and~\ref{thsec3.2} in the case of the approximation of functions by
algebraic polynomials.
Following~\cite{DiTi07}, we  say that $f$ has (weak) $r$th
derivative in $L_p[-1,1]$ if there exists a function $g$ such that
for any interval $[a,b]$, $-1<a<b<1$, one has
\begin{equation}\label{SenseSerL[-1,1]}
  \left\Vert\frac{\bar{\D}^r f}{h^r}-g\right\Vert_{L_p[a,b]}\to 0 \quad\text{as}\quad h\to 0+.
\end{equation}
In this case we write $g=f^{(r)}$.

\begin{theorem}\label{thsec3.AP}
{\it Let $f\in L_p[-1,1]$, $0<p<1$, and let for some $k\in \N$
\begin{equation*}
%\label{eqthsec3.AP}
\sum\limits_{\nu=1}^\infty \nu^{kp-1}E_\nu(f)_p^p<\infty\,.
\end{equation*}
Then $f$ has the derivative $f^{(k)}$ in the sense (\ref{SenseSerL[-1,1]}) and for any $n>k$ one has
\begin{equation*}
%\label{eqthsec3.AP}
\|\vp^k(f^{(k)}-P_n^{(k)})\|_p\leq
C\(n^kE_n(f)_p+\(\sum\limits_{\nu=n+1}^\infty
\nu^{kp-1}E_\nu(f)_p^p\)^{\frac{1}{p}}\)\,,
\end{equation*}
where $P_n\in\mathcal{P}_n$,
$n\in\N$, are such that $\|f-P_n\|_p=E_n(f)$ and $C$ is a constant independent of $f$ and $n$. }
\end{theorem}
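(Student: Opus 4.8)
The plan is to carry over, to the weighted setting of Section~4, the scheme used for Theorem~\ref{th2S} (which itself follows Ivanov's argument for Theorem~\ref{thsec3.A}). We may assume $\sum_{\nu=1}^\infty\nu^{kp-1}E_\nu(f)_p^p<\infty$; this forces $E_{2^N}(f)_p=o(2^{-Nk})$ as $N\to\infty$. Let $P_\nu\in\mathcal{P}_\nu$ be best approximants, $\|f-P_\nu\|_p=E_\nu(f)_p$. The role of Markov's inequality (Lemma~\ref{lemMarkS}) in the spline case is now taken by
$$
\|\vp^k P_n^{(k)}\|_p\le C n^k\|P_n\|_p ,
$$
which follows at once from Lemma~\ref{lemNSP} (with $r=k$ and $\d=(Mn)^{-1}$) and \eqref{eq.Sec3.M1}. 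Applying this to $P_{2^{\nu+1}}-P_n\in\mathcal{P}_{2^{\nu+1}}$ and using the $p$-quasi-triangle inequality gives $\|\vp^k(P_{2^{\nu+1}}^{(k)}-P_{2^{\nu}}^{(k)})\|_p^p\le C\,2^{kp\nu}E_{2^\nu}(f)_p^p$.

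A dyadic telescoping, together with the elementary comparison $\sum_{\nu\ge N}2^{kp\nu}E_{2^\nu}(f)_p^p\le C\sum_{\mu>2^{N-1}}\mu^{kp-1}E_\mu(f)_p^p$, then shows that $\{\vp^k P_{2^\nu}^{(k)}\}$ is Cauchy in $L_p[-1,1]$; by completeness it converges to some $g\in L_p[-1,1]$ with $\|g-\vp^k P_{2^N}^{(k)}\|_p^p\le C\sum_{\nu\ge N}2^{kp\nu}E_{2^\nu}(f)_p^p$. Granting that $g=\vp^k f^{(k)}$, where $f^{(k)}$ is the weak derivative in the sense \eqref{SenseSerL[-1,1]} (proved below), the asserted inequality follows for an arbitrary $n>k$: with $N$ such that $2^{N-1}\le n<2^N$ we split $\vp^k(f^{(k)}-P_n^{(k)})=\vp^k(f^{(k)}-P_{2^N}^{(k)})+\vp^k(P_{2^N}^{(k)}-P_n^{(k)})$; the second term is $\le C n^k E_n(f)_p$ by the displayed Markov-type bound and $E_{2^N}(f)_p\le E_n(f)_p$, while for the first one writes $\sum_{\nu\ge N}2^{kp\nu}E_{2^\nu}(f)_p^p=2^{kpN}E_{2^N}(f)_p^p+\sum_{\nu\ge N+1}2^{kp\nu}E_{2^\nu}(f)_p^p$, bounding the isolated term by $C n^{kp}E_n(f)_p^p$ (using $E_{2^N}(f)_p\le E_n(f)_p$ and $2^N\asymp n$) and the tail by $C\sum_{\mu>n}\mu^{kp-1}E_\mu(f)_p^p$.

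The main obstacle is the identification of $g$, i.e.\ the algebraic counterpart of Lemma~\ref{lem.th2}: if $\|f-P_{2^\nu}\|_p=o(2^{-\nu k})$ and $\vp^k P_{2^\nu}^{(k)}\to g$ in $L_p[-1,1]$, then $g=\vp^k f^{(k)}$ in the sense \eqref{SenseSerL[-1,1]}. I would prove this as in Lemma~\ref{lem.th2}: fix $[a,b]\subset(-1,1)$, on which $\vp$ is bounded away from $0$; given small $\e>0$ choose $n=n(\e)$ (of the form $2^\nu$) with $\|f-P_n\|_p\le\e n^{-k}$ and $\|g-\vp^k P_n^{(k)}\|_p\le\e$, put $t=\e^{1/(2k)}n^{-1}$, and decompose on $[a,b]$
$$
\frac{\Dl_t^k f}{t^k}-\frac{g}{\vp^k}=\frac{\Dl_t^k(f-P_n)}{t^k}+\left(\frac{\Dl_t^k P_n}{t^k}-P_n^{(k)}\right)+\left(P_n^{(k)}-\frac{g}{\vp^k}\right).
$$
In the $L_p[a,b]$ quasi-norm the $p$-th powers of the first and third summands are $\le C\e^{p/2}$ and $\le C\e^{p}$. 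For the middle one uses the finite Taylor expansion $\Dl_t^k P_n=\sum_{j\ge0}c_{k,j}t^{k+2j}P_n^{(k+2j)}$, $c_{k,0}=1$ (valid since $P_n$ is a polynomial), together with the Bernstein--Nikolskii inequalities $\|\vp^{k+2j}P_n^{(k+2j)}\|_p\le C n^{2j}\|\vp^k P_n^{(k)}\|_p$ (a consequence of Lemma~\ref{lemNSP}); since $\vp$ is bounded away from $0$ on $[a,b]$, this bounds the $p$-th power of the middle summand by $C(tn)^{2p}\|\vp^k P_n^{(k)}\|_p^p\le C(\|g\|_p+\e)^p\e^{p/k}$. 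Letting $\e\to0$, hence $t\to0$ (exactly as in Lemma~\ref{lem.th2}), yields $\|\Dl_t^k f/t^k-g/\vp^k\|_{L_p[a,b]}\to0$, i.e.\ $f^{(k)}=g/\vp^k$ on $[a,b]$; by uniqueness of the weak derivative this is independent of $[a,b]$, and the theorem is proved.
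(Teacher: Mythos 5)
Your proposal is correct and follows essentially the route the paper itself indicates: the dyadic scheme of Theorem~\ref{th2S} (weighted Markov/Bernstein bound from Lemma~\ref{lemNSP} and \eqref{eq.Sec3.M1}, telescoping, completeness of $L_p$), plus an identification lemma in the spirit of Lemma~\ref{lem.th2}. The only difference is that you prove the algebraic identification step directly (via the symmetric-difference Taylor expansion and the weighted Bernstein inequality, where your appeal to Lemma~\ref{lemNSP} implicitly also uses $\Omega_{m}^\vp(g,\d)_{p,\vp^k}\le C\Vert\vp^k g\Vert_p$, as quoted before \eqref{eq.th3.5+}), whereas the paper simply cites \cite[Corollary~4.12]{DiTi07} for it; this makes your argument self-contained but otherwise identical in substance.
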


\begin{proof}
This theorem can be proved by using the scheme of the proof of
Theorem~\ref{th2S}. See also the proof of Theorem~5.13
in~\cite{DiTi07}. An analogue of Lemma~\ref{lem.th2} can be also
find in~\cite[Corollary 4.12]{DiTi07}.
\end{proof}

The corresponding theorem about simultaneous approximation by
algebraic polynomials has the following form.

\begin{theorem}\label{thsec3.2P}
{\it Let $0<p<1$, $r\in \N$, and let $f$ be such that $f^{(r-1)}\in
AC[-1,\,1]$, $\vp^rf^{(r)}\in L_1[-1,1]$, and
\begin{equation}\label{eqthsec3.2.1P}
  \sum\limits_{\nu=1}^\infty \nu^{1-2p}E_{\nu-r}(f^{(r)})_{p,\vp^r}^p<\infty\,.
\end{equation}
Then for any $n\ge n(r)$ we have
\begin{equation*}
%\label{eqthsec3.2.2P}
\begin{split}
  &\|\vp^r(f^{(r)}-P_n^{(r)})\|_p\leq C\Bigg(E_{n-r}(f^{(r)})_{p,\vp^r}+n^{2-\frac{2}{p}}\bigg(\sum\limits_{\nu=n+1}^\infty \nu^{1-2p}E_{\nu-r}(f^{(r)})_{p,\vp^r}^p\bigg)^{\frac{1}{p}}\Bigg)\,,
  \end{split}
\end{equation*}
where $P_n\in\mathcal{P}_n$, $n\in\N$, are such that $\|f-P_n\|_p=E_n(f)$ and $C$ is a constant independent of $f$ and $n$.
}
\end{theorem}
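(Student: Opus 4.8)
The plan is to combine Theorem~\ref{th3} with Theorem~\ref{thsec3.AP}, in complete analogy with the way Theorem~\ref{thsec3.2} was deduced from Theorem~\ref{thsec3.1} and Theorem~\ref{thsec3.A}. The only new ingredient is that the intermediate quantity one must control is now $\sum_{\nu}\nu^{rp-1}E_\nu(f)_p^p$ (the weight needed to feed into Theorem~\ref{thsec3.AP} with $k=r$), and this sum has to be bounded in terms of the weighted quantities $E_{\nu-r}(f^{(r)})_{p,\vp^r}$ appearing in hypothesis~\eqref{eqthsec3.2.1P}.

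First I would apply Theorem~\ref{th3}: for every $\nu\ge n(r)$,
$$
E_\nu(f)_p^p\le \frac{C}{\nu^{rp}}\Bigl(E_{\nu-r}(f^{(r)})_{p,\vp^r}^p+\nu^{2p-2}\sum_{\mu=\nu+1}^\infty\mu^{1-2p}E_{\mu-r}(f^{(r)})_{p,\vp^r}^p\Bigr).
$$
Multiplying by $\nu^{rp-1}$ and summing over $\nu>n$, the first term contributes $\sum_{\nu>n}\nu^{-1}E_{\nu-r}(f^{(r)})_{p,\vp^r}^p$, which (shifting the index and using monotonicity) is dominated by $n^{2p-2}\sum_{\nu>n}\nu^{1-2p}E_{\nu-r}(f^{(r)})_{p,\vp^r}^p$ up to a constant, since $\nu^{-1}\le C\,n^{2p-2}\nu^{1-2p}$ for $\nu>n$ when $0<p<1$ (here $2p-2<0$). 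For the second (double-sum) term I would interchange the order of summation: $\sum_{\nu>n}\nu^{p-2}\sum_{\mu>\nu}\mu^{1-2p}(\cdots)\le\bigl(\sum_{\nu>n}\nu^{p-2}\bigr)\sum_{\mu>n}\mu^{1-2p}(\cdots)$, and $\sum_{\nu>n}\nu^{p-2}\asymp n^{p-1}$. Both contributions are therefore $\le C\,n^{2p-2}\sum_{\nu>n}\nu^{1-2p}E_{\nu-r}(f^{(r)})_{p,\vp^r}^p$; in particular, in view of~\eqref{eqthsec3.2.1P} the series $\sum_\nu\nu^{rp-1}E_\nu(f)_p^p$ converges, so Theorem~\ref{thsec3.AP} applies with $k=r$.

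Next I would invoke Theorem~\ref{thsec3.AP} with $k=r$:
$$
\|\vp^r(f^{(r)}-P_n^{(r)})\|_p\le C\Bigl(n^rE_n(f)_p+\Bigl(\sum_{\nu>n}\nu^{rp-1}E_\nu(f)_p^p\Bigr)^{1/p}\Bigr).
$$
The first term $n^rE_n(f)_p$ is bounded directly by Theorem~\ref{th3} (take $\nu=n$ in the displayed inequality above, then take $p$th roots), giving exactly $C\bigl(E_{n-r}(f^{(r)})_{p,\vp^r}+n^{2-2/p}(\sum_{\nu>n}\nu^{1-2p}E_{\nu-r}(f^{(r)})_{p,\vp^r}^p)^{1/p}\bigr)$ — note $n^r\cdot n^{-r-2/p+2}=n^{2-2/p}$. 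The second term is $\le C\,n^{2-2/p}(\sum_{\nu>n}\nu^{1-2p}E_{\nu-r}(f^{(r)})_{p,\vp^r}^p)^{1/p}$ by the estimate of the previous paragraph. Adding the two contributions yields the asserted inequality. I should also remark that the polynomials $P_n$ delivered by Theorem~\ref{thsec3.AP} are polynomials of best unweighted approximation, $\|f-P_n\|_p=E_n(f)_p$, which is exactly the normalization required in the statement, so no reconciliation of two different families of polynomials is needed.

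The only mildly delicate point — the "main obstacle," such as it is — is the bookkeeping with the negative exponents when $0<p<1$: one must be careful that $2p-2<0$ and $p-2<0$ so that the elementary comparisons $\nu^{-1}\le C n^{2p-2}\nu^{1-2p}$ (for $\nu>n$) and $\sum_{\nu>n}\nu^{p-2}\asymp n^{p-1}$ go in the right direction, and that the index shift $\nu\mapsto\nu-r$ together with monotonicity of $E_{\cdot}(f^{(r)})_{p,\vp^r}$ costs only a constant depending on $r$ (here one uses $n\ge n(r)$). Everything else is a routine repetition of the proof of Theorem~\ref{thsec3.2}.
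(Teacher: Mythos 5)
Your proposal follows the paper's own route exactly: the paper proves Theorem~\ref{thsec3.2P} by combining Theorem~\ref{th3} with Theorem~\ref{thsec3.AP} (applied with $k=r$) and repeating the bookkeeping of the proof of Theorem~\ref{thsec3.2}, which is precisely what you do, including the remark that the polynomials in Theorem~\ref{thsec3.AP} are the polynomials of best unweighted approximation required in the statement. One arithmetic slip should be corrected: after multiplying the bound from Theorem~\ref{th3} by $\nu^{rp-1}$, the double-sum term carries the factor $\nu^{rp-1}\,\nu^{-rp}\,\nu^{2p-2}=\nu^{2p-3}$, not $\nu^{p-2}$; fortunately $\sum_{\nu>n}\nu^{2p-3}\asymp n^{2p-2}$ (since $2p-3<-1$), which is exactly the factor your final inequality needs, whereas with the exponent $p-2$ one would only get $n^{p-1}$ and the asserted bound would not follow.
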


\begin{proof}
To prove the theorem one can use Theorem~\ref{th3} and Theorem~\ref{thsec3.AP} and repeat the scheme of proving Theorem~\ref{thsec3.2}.
\end{proof}

From Theorems~\ref{th3},~\ref{thsec3.AP}, and~\ref{thsec3.2P}, we get the following corollary.

\begin{corollary}\label{corsec3.1E_Alg}
{\it Let $0<p<1$, $r\in \N$, $\a>{2}/{p}-2$, and let $f$ be such
that $f^{(r-1)}\in AC[-1,\,1]$, $\vp^rf^{(r)}\in L_1[-1,1]$. Then
the following assertions are equivalent:

$(i)$ $E_n(f)_p=\mathcal{O}(n^{-r-\a})\,, \quad n\rightarrow
\infty\,,$

$(ii)$ $E_n(f^{(r)})_{p,\vp^r}=\mathcal{O}(n^{-\a})\,, \quad
n\rightarrow \infty\,,$

$(ii)$ $\Vert
\vp^r(f^{(r)}-P_n^{(r)})\Vert_p=\mathcal{O}(n^{-\a})\,, \quad
n\rightarrow \infty\,,$

\noindent where $P_n\in\mathcal{P}_n$, $n\in\N$, are such that
$\|f-P_n\|_p=E_n(f)$.}
\end{corollary}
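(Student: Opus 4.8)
The plan is to establish the cyclic chain of implications $(ii)\Rightarrow(i)\Rightarrow(iii)\Rightarrow(ii)$, invoking at each arrow one of Theorems~\ref{th3}, \ref{thsec3.AP} (taken with $k=r$), and the trivial inclusion $\mathcal{P}_{n-r}\subseteq\mathcal{P}_n$, and reducing everything to the elementary estimate $\sum_{\nu>n}\nu^{-s}\le Cn^{1-s}$, valid for $s>1$. As a preliminary observation I would record that the threshold $\a>2/p-2$ is precisely what makes the hypotheses of those theorems available in the directions we use: if $E_{\nu-r}(f^{(r)})_{p,\vp^r}=\mathcal{O}(\nu^{-\a})$ then $\nu^{1-2p}E_{\nu-r}(f^{(r)})_{p,\vp^r}^p=\mathcal{O}(\nu^{1-2p-\a p})$ with $1-2p-\a p<-1$, so condition~\eqref{eq.th3.0} holds; and if $E_\nu(f)_p=\mathcal{O}(\nu^{-r-\a})$ then $\nu^{rp-1}E_\nu(f)_p^p=\mathcal{O}(\nu^{-1-\a p})$, which is summable since $\a p>0$, so the hypothesis of Theorem~\ref{thsec3.AP} holds. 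I would also note at the start that $\vp^r f^{(r)}\in L_p[-1,1]$: by assumption it lies in $L_1[-1,1]$, and on a finite interval $L_1\subseteq L_p$ for $0<p<1$; hence all the quantities $E_n(f^{(r)})_{p,\vp^r}$ are finite.

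For $(ii)\Rightarrow(i)$ I would substitute $E_{n-r}(f^{(r)})_{p,\vp^r}=\mathcal{O}(n^{-\a})$ (using $n-r\asymp n$) into~\eqref{eq.th3.1}: the first summand is $\mathcal{O}(n^{-r-\a})$ outright, while for the second one $\sum_{\nu>n}\nu^{1-2p}E_{\nu-r}(f^{(r)})_{p,\vp^r}^p\le Cn^{2-2p-\a p}$, so that $n^{-r}\cdot n^{2-2/p}(\cdots)^{1/p}\le Cn^{-r}\,n^{2-2/p}\,n^{(2-2p-\a p)/p}=Cn^{-r-\a}$; the surviving power of $n$ is exactly $-r-\a$. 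For $(i)\Rightarrow(iii)$ I would apply Theorem~\ref{thsec3.AP} with $k=r$: its conclusion provides a function satisfying~\eqref{SenseSerL[-1,1]}, which coincides a.e.\ with the classical derivative $f^{(r)}$ supplied by $f^{(r-1)}\in AC[-1,1]$ (a routine subsequence-of-difference-quotients argument on compact subintervals of $(-1,1)$), together with $\|\vp^r(f^{(r)}-P_n^{(r)})\|_p\le C(n^r E_n(f)_p+(\sum_{\nu>n}\nu^{rp-1}E_\nu(f)_p^p)^{1/p})$; with $E_n(f)_p=\mathcal{O}(n^{-r-\a})$ the first term is $\mathcal{O}(n^{-\a})$ and the tail sum is $\mathcal{O}(n^{-\a p})$, so the bound is $\mathcal{O}(n^{-\a})$. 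Finally $(iii)\Rightarrow(ii)$ is immediate, since $P_n^{(r)}\in\mathcal{P}_{n-r}\subseteq\mathcal{P}_n$ gives $E_n(f^{(r)})_{p,\vp^r}\le\|\vp^r(f^{(r)}-P_n^{(r)})\|_p=\mathcal{O}(n^{-\a})$.

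There is no deep obstacle; the whole thing is bookkeeping with tails of power series. The one place to be careful is the exponent cancellation in $(ii)\Rightarrow(i)$: the factor $n^{2-2/p}$ appearing in Theorem~\ref{th3} must cancel exactly against the power of $n$ produced when the convergent tail series is raised to the power $1/p$, and it is precisely for this cancellation (and for the convergence of the series itself) that the sharp restriction $\a>2/p-2$ is needed. A secondary point, also routine, is matching the three a priori distinct notions of $r$th derivative in play — the classical one from $f^{(r-1)}\in AC[-1,1]$ and the weighted/local one in~\eqref{SenseSerL[-1,1]}. Theorem~\ref{thsec3.2P} is not strictly required for the chain above, but it yields a direct shortcut $(ii)\Rightarrow(iii)$ and could be cited in place of passing through $(i)$.
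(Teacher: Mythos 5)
Your proposal is correct and follows essentially the route the paper intends: the paper simply states that the corollary follows from Theorems~\ref{th3}, \ref{thsec3.AP}, and~\ref{thsec3.2P}, and your cycle $(ii)\Rightarrow(i)\Rightarrow(iii)\Rightarrow(ii)$ with the tail-sum bookkeeping (and the observation that $\a>2/p-2$ is exactly what makes the series hypotheses hold and the exponents cancel) is precisely that deduction. Your added remarks on $L_1\subset L_p$ on a finite interval and on identifying the weak derivative of~\eqref{SenseSerL[-1,1]} with the classical one are sensible details the paper leaves implicit.
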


\subsection{\textbf{Inequalities for the Ditzian-Totik moduli of smoothness}}
%Now, let us consider the corresponding inequalities for
%Ditzian-Totik moduli of smoothness.

By Theorems~\ref{th3},~\ref{thsec3.AP}, and~\ref{thsec3.2P}, we can prove the next two theorems for
the Ditzian-Totik moduli of smoothness of functions and their
derivatives.

\begin{theorem}\label{th3m}
{\it Let $0<p<1$, $r,\,k\in\N$, and let a function $f$ be such that $f^{(r-1)}\in AC[-1,\,1]$, $\vp^rf^{(r)}\in L_1[-1,1]$. Then for any $\d\in(0,\,\d_0)$ we have
\begin{equation}\label{eq.th3.1m+}
\begin{split}
\w_{r+k}^\vp(f,\,\d)_p\leq
&C\d^r\Omega_k^\vp(f^{(r)},\,\d)_{p,\,\vp^r}\\
&+C\d^{r+\frac{2}{p}-2}\(\sum_{\nu=\left[{1}/{\d}\right]}^\infty\nu^{1-2p}E_{\nu-r}(f^{(r)})_{p,\vp^r}^p\)^{\frac{1}{p}}\,,
\end{split}
\end{equation}
where $\d_0$ and $C$ are constants independent of $f$ and $\d$.
}
\end{theorem}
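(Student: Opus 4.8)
The plan is to repeat, almost line for line, the proof of Theorem~\ref{th1T}, replacing its three main ingredients by their algebraic counterparts: the Jackson inequality Lemma~\ref{lemJP}, the Stechkin--Nikol'skii equivalence Lemma~\ref{lemNSP} (which plays the role of Lemma~\ref{lemNST} and, conveniently, already produces a \emph{main-part} modulus of the $r$th derivative), and Lemma~\ref{lem.modP} in place of the reverse-scaling property~(\ref{eqM2}); the roles of Theorems~\ref{thsec3.1} and~\ref{thsec3.2} are now played by Theorems~\ref{th3} and~\ref{thsec3.2P}. As in the proof of Theorem~\ref{th1T}, one may assume that $\sum_{\nu}\nu^{1-2p}E_{\nu-r}(f^{(r)})_{p,\vp^r}^p<\infty$, since otherwise the right-hand side of~(\ref{eq.th3.1m+}) is infinite; in particular the hypotheses of Theorems~\ref{th3} and~\ref{thsec3.2P} are then met.

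Fix $n\in\N$ with $1/(n+1)<\d\le 1/n$ (so $n=[1/\d]$, $1/n\asymp\d$, and $n\ge n(r)$ once $\d<\d_0$) and let $P_n\in\mathcal{P}_n$ satisfy $\|f-P_n\|_p=E_n(f)_p$. By monotonicity of $\d\mapsto\w_{r+k}^\vp(f,\d)_p$ and the inequality $\|u+v\|_p^p\le\|u\|_p^p+\|v\|_p^p$,
\begin{equation*}
\w_{r+k}^\vp(f,\d)_p^p\le \w_{r+k}^\vp(f-P_n,1/n)_p^p+\w_{r+k}^\vp(P_n,1/n)_p^p=:M_1+M_2 .
\end{equation*}
For $M_2$: since Lemma~\ref{lemNSP} is available only at scales $\le(Mn)^{-1}$, I would first use Lemma~\ref{lem.modP} to pass from scale $1/n$ to scale $(Mn)^{-1}$ at the cost of the bounded factor $C(1+M)^{r+k+2(1/p-1)}$, and then Lemma~\ref{lemNSP} to get $M_2\le Cn^{-rp}\Omega_k^\vp(P_n^{(r)},(Mn)^{-1})_{p,\vp^r}^p$. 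Writing $P_n^{(r)}=f^{(r)}+(P_n^{(r)}-f^{(r)})$, using $p$-subadditivity of $(\Omega_k^\vp)^p$, the doubling property $\Omega_k^\vp(g,2t)_{p,\vp^r}\le C\Omega_k^\vp(g,t)_{p,\vp^r}$ (analogue of~(\ref{eq.*})), and the elementary bound $\Omega_k^\vp(g,t)_{p,\vp^r}\le C\|\vp^r g\|_p$ (valid because $\vp(x+\tau h\vp(x))\asymp\vp(x)$ with bounded Jacobian on the main part, exactly as in the proof of Lemma~\ref{lem.modP}), one obtains
\begin{equation*}
M_2\le Cn^{-rp}\Big(\Omega_k^\vp(f^{(r)},\d)_{p,\vp^r}^p+\|\vp^r(f^{(r)}-P_n^{(r)})\|_p^p\Big),
\end{equation*}
and the last quasinorm is estimated by Theorem~\ref{thsec3.2P}. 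For $M_1$: by~(\ref{eq.Sec3.M1}), $M_1\le C\|f-P_n\|_p^p=CE_n(f)_p^p$, which is estimated by Theorem~\ref{th3}. After these substitutions the bound for $\w_{r+k}^\vp(f,\d)_p^p$ is a sum of terms of three kinds: $Cn^{-rp}\Omega_k^\vp(f^{(r)},\d)_{p,\vp^r}^p$; $Cn^{-rp}E_{n-r}(f^{(r)})_{p,\vp^r}^p$; and $Cn^{-rp}\,n^{2p-2}\sum_{\nu=n+1}^\infty\nu^{1-2p}E_{\nu-r}(f^{(r)})_{p,\vp^r}^p$. Since $n\asymp1/\d$, the first and third terms are, up to constants and the harmless $\nu=n$ summand, precisely the two terms on the right of~(\ref{eq.th3.1m+}).

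The one genuinely new point, and the step I expect to be the main obstacle, is to absorb the middle term $n^{-rp}E_{n-r}(f^{(r)})_{p,\vp^r}^p$ into $\d^{rp}\Omega_k^\vp(f^{(r)},\d)_{p,\vp^r}^p$. In the proof of Theorem~\ref{th1T} the analogous term $n^{-rp}\w_m(f^{(r)},1/n)_p^p$ is swallowed by the integral because moduli of smoothness obey the reverse scaling~(\ref{eqM2}); that device is unavailable here, since errors of best approximation satisfy no reverse-scaling inequality. Instead I would invoke a Jackson-type estimate for weighted polynomial approximation, $E_m(g)_{p,\vp^r}\le C\,\Omega_k^\vp(g,1/m)_{p,\vp^r}$, applied with $g=f^{(r)}$ and $m=n-r\asymp n$ (then the doubling property replaces $1/(n-r)$ by $\d$). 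For $0<p<1$ such a direct estimate \emph{with the main-part modulus} and the heavy weight $\vp^r$ is exactly where the hypothesis $\vp^rf^{(r)}\in L_1[-1,1]$ should enter: it is needed to dominate the endpoint contributions by which the full Ditzian--Totik weighted modulus differs from $\Omega_k^\vp$, and it can be extracted from the circle of results behind Lemma~\ref{lemJP} together with~\cite{DiTi07}. Everything else — the two-scale reduction via Lemma~\ref{lem.modP}, the $p$-subadditivity bookkeeping, and the passage from the tail $\sum_{\nu>n}$ to $\sum_{\nu\ge[1/\d]}$ — is routine and parallels the periodic argument verbatim.
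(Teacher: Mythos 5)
Your skeleton coincides with the paper's proof: split off a near-best polynomial $P_N$, bound $\w_{r+k}^\vp(P_N,\d)_p^p$ by $C\d^{rp}\bigl(\Omega_k^\vp(f^{(r)},\d)_{p,\vp^r}^p+\Vert\vp^r(f^{(r)}-P_N^{(r)})\Vert_p^p\bigr)$ via Lemma~\ref{lemNSP} and $\Omega_k^\vp(g,\d)_{p,\vp^r}\le C\Vert\vp^r g\Vert_p$, bound $\w_{r+k}^\vp(f-P_N,\d)_p^p$ by $C\Vert f-P_N\Vert_p^p$ via \eqref{eq.Sec3.M1}, and then feed in the approximation results of Section~4. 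The genuine gap is exactly at the step you single out as the main obstacle: to dispose of the leftover term $Cn^{-rp}E_{n-r}(f^{(r)})_{p,\vp^r}^p$ you invoke a weighted Jackson inequality $E_m(g)_{p,\vp^r}\le C\,\Omega_k^\vp(g,1/m)_{p,\vp^r}$ in $L_p$, $0<p<1$. No such inequality is stated or proved in the paper (Lemma~\ref{lemJP} is unweighted and uses the full modulus $\w_k^\vp$, not the main-part one), and for $0<p<1$ a main-part, $\vp^r$-weighted Jackson estimate is precisely the kind of delicate endpoint statement that cannot be waved through by saying it ``can be extracted'' from Lemma~\ref{lemJP} and \cite{DiTi07}; your appeal to the hypothesis $\vp^r f^{(r)}\in L_1$ here is speculation, not an argument. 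As written, your proof rests on an unestablished auxiliary result.

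Moreover, the step is unnecessary, and this is where the paper goes a different (simpler) way: it never isolates that middle term. Instead of the final statements of Theorems~\ref{th3} and~\ref{thsec3.2P} it uses the tail estimates from inside their proofs (\eqref{eq.th3.6} and \eqref{eq.th3.11}, and the analogous bound for $\Vert\vp^r(f^{(r)}-P^{(r)})\Vert_p$), in which $E_{N-r}(f^{(r)})_{p,\vp^r}^p$ appears as the first dyadic summand of the tail, and then converts the dyadic tail into the integer sum of \eqref{eq.th3.1m+} by monotonicity of $E_\nu(f^{(r)})_{p,\vp^r}$. In your notation the repair is: take the degree $N$ a fixed constant multiple of $1/\d$ (say $N\asymp 2/\d$; Lemma~\ref{lem.modP}, which you already use, absorbs the resulting scale mismatch in Lemma~\ref{lemNSP} into a constant). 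Then
\begin{equation*}
N^{-rp}E_{N-r}(f^{(r)})_{p,\vp^r}^p\le C\,\d^{rp+2-2p}\sum_{\nu=[1/\d]}^{N}\nu^{1-2p}E_{\nu-r}(f^{(r)})_{p,\vp^r}^p,
\end{equation*}
since the block contains about $1/\d$ indices $\nu\le N$ with $E_{\nu-r}\ge E_{N-r}$ and $\nu^{1-2p}\asymp\d^{2p-1}$; this is exactly why the sum in \eqref{eq.th3.1m+} starts at $[1/\d]$ rather than $[1/\d]+1$. Note that with $N=[1/\d]$ itself the absorption is impossible: the single summand $\nu=[1/\d]$ contributes only $\asymp n^{-rp-1}E_{n-r}^p$, a factor $n$ too small, so your parenthetical ``harmless $\nu=n$ summand'' cannot carry this term. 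With the block absorption in place of your weighted Jackson inequality, the rest of your bookkeeping goes through and coincides with the paper's argument.
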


\begin{proof}\emph{of\,\, Theorem~\ref{th3m}\,\,\,}
It is clear that we can suppose that~\eqref{eqthsec3.2.1P} holds.

Let $P_{n}\in \mathcal{P}_{n}$,  $n\in \N$, be such that
$$
\Vert f-P_{n}\Vert_p=E_{n}(f)_{p}.
$$
Let us choose $n$  such that
$2^{-n}\le \d< 2^{-(n+1)}$.
We have
\begin{equation}\label{eq.th3.2+}
\begin{split}
 \w_{r+k}^\vp (f,\d)_p^p\le \w_{r+k}^\vp(f-P_{2^n},\d)_p^p+\w_{r+k}^\vp(P_{2^n},\d)_p^p.
\end{split}
\end{equation}
By Lemma~\ref{lemNSP}, we get
\begin{equation}\label{eq.th3.3+}
\begin{split}
\w_{r+k}^\vp(P_{2^n},\d)_p^p &\le C\Omega_{r+k}^\vp (P_{2^n}, \d)_p^p\le C\d^{rp}\Omega_{k}^\vp (P_{2^n}^{(r)}, \d)_{p,\vp^r}^p\\
&\le C\d^{rp}(\Omega_{k}^\vp (f^{(r)},\d)_{p,\vp^r}^p+\Omega_{k}^\vp (f^{(r)}-P_{2^n}^{(r)},\d)_{p,\vp^r}^p).
\end{split}
\end{equation}
Note that if $g\in L_p[a,b]$ for any $-1<a<b<1$, then
\begin{equation*}\label{eq.th3.4+}
\Omega_k^\vp (g,\d)_{p,\vp^r}\le C\Vert \vp^r
g\Vert_{p}
\end{equation*}
(see, e.g., the proof of Lemma~5.15 in~\cite{DiTi07}). Therefore,
%by
%(\ref{eq.th3.4+}),
\begin{equation}\label{eq.th3.5+}
\begin{split}
\Omega_k^\vp (f^{(r)}-P_{2^n}^{(r)},\d)_{p,\vp^r}^p&\le C\Vert
\vp^r(f^{(r)}-P_{2^n}^{(r)})\Vert_{p}^p.
\end{split}
\end{equation}

Now, combining~\eqref{eq.th3.2+}--\eqref{eq.th3.5+} and using~\eqref{eq.Sec3.M1}, we derive
\begin{equation}\label{eq.th3.6+}
  \begin{split}
    \w_{r+k}^\vp (f,\d)_p^p\le C(\d^{rp}\Omega_{k}^\vp (f^{(r)},\d)_{p,\vp^r}^p&+\d^{rp}\Vert
\vp^r(f^{(r)}-P_{2^n}^{(r)})\Vert_{p}^p\\
&+\Vert f-P_{2^n}\Vert_{p}^p).
  \end{split}
\end{equation}
Next, by Theorem~\ref{th3} (more precisely, using \eqref{eq.th3.6} and~\eqref{eq.th3.11}), we derive
\begin{equation}\label{eq.th3.7+}
  \begin{split}
\Vert f-P_{2^n}\Vert_{p}^p\le C2^{-n(rp+2-2p)}\sum_{\nu=n}^\infty
2^{2(1-p)\nu}E_{2^\nu-r}(f^{(r)})_{p,\vp^r}^p.
  \end{split}
\end{equation}
Similarly, by Theorem~\ref{thsec3.2P}, we have
\begin{equation}\label{eq.th3.8+}
  \begin{split}
\Vert
\vp^r(f^{(r)}-P_{2^n}^{(r)})\Vert_{p}^p\le C2^{-n(2-2p)}\sum_{\nu=n}^\infty
2^{2(1-p)\nu}E_{2^\nu-r}(f^{(r)})_{p,\vp^r}^p.
  \end{split}
\end{equation}

Finally, combining (\ref{eq.th3.6+}), (\ref{eq.th3.7+}), and (\ref{eq.th3.8+}), we get
(\ref{eq.th3.1m+}).

The theorem is proved.

\end{proof}

Recall also the following counterpart of Theorem~\ref{thsec3.B} and Theorem~\ref{thModFrD} in the case of the Ditzian-Totik moduli of smoothness (see~\cite{DiTi07}).

\begin{theorem}\label{thsec3.BP}
{\it Let $f\in L_p[-1,1]$, $0<p<1$, $r\,,k\in \N$, and $k<r$. Then
\begin{equation}\label{eqthsec3.B.1P}
    \Omega_{r-k}^\vp(f^{(k)},\d)_{p,\vp^k}\le C\(\int_0^\d
    \frac{\w_r^\vp(f,t)_p^p}{t^{pk+1}}{\rm d}t\)^\frac1p, \quad \d>0,
\end{equation}
where $C$ is some constant independent of $f$ and
$\d$. Inequality (\ref{eqthsec3.B.1P}) means that if the right-hand side is finite, then there exists
 $f^{(k)}$ in the sense
(\ref{SenseSerL[-1,1]}), $\vp^k f^{(k)}\in L_p[-1,1]$, and
(\ref{eqthsec3.B.1P}) holds.}
\end{theorem}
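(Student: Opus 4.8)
The plan is to mimic the proofs of Theorem~\ref{thsec3.B} and Theorem~\ref{thModFrD}, replacing their ingredients by the algebraic-polynomial analogues available above: the inverse approximation theorem (Theorem~\ref{thsec3.AP}), the Stechkin--Nikolskii type equivalence (Lemma~\ref{lemNSP}), the Jackson theorem (Lemma~\ref{lemJP}), and the monotonicity-type estimate for the Ditzian--Totik modulus (Lemma~\ref{lem.modP}). As usual we may assume the right-hand side of~\eqref{eqthsec3.B.1P} is finite. Since $\w_r^\vp(f,t)_p\le C\Vert f\Vert_p$ by~\eqref{eq.Sec3.M1}, the tail $\int_\d^1\w_r^\vp(f,t)_p^p\,t^{-pk-1}\,{\rm d}t$ is finite as well, so $\int_0^1\w_r^\vp(f,t)_p^p\,t^{-pk-1}\,{\rm d}t<\infty$; comparing this integral with the sum over the intervals $[1/(\nu+1),1/\nu]$ (monotonicity of $t\mapsto\w_r^\vp(f,t)_p$) and using Lemma~\ref{lemJP} yields $\sum_{\nu\ge1}\nu^{kp-1}E_\nu(f)_p^p<\infty$. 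Hence Theorem~\ref{thsec3.AP} applies: $f$ has the derivative $f^{(k)}$ in the sense~\eqref{SenseSerL[-1,1]}, $\vp^k f^{(k)}\in L_p[-1,1]$, and the polynomials $P_n\in\mathcal{P}_n$ of best approximation satisfy
\begin{equation*}
\Vert\vp^k(f^{(k)}-P_n^{(k)})\Vert_p^p\le C\Big(n^{kp}E_n(f)_p^p+\sum_{\nu>n}\nu^{kp-1}E_\nu(f)_p^p\Big).
\end{equation*}

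Next I fix $\d\in(0,\d_0)$, let $M$ be the constant of Lemma~\ref{lemNSP}, and choose $n=n(\d)$ to be the largest integer with $\d\le(Mn)^{-1}$, so that $n\asymp\d^{-1}$ and $1/n\le C\d$ for $\d$ small. Using the $p$-subadditivity of $\Omega_{r-k}^\vp$, the bound $\Omega_{r-k}^\vp(g,\d)_{p,\vp^k}\le C\Vert\vp^k g\Vert_p$ (already employed in the proof of Theorem~\ref{th3m}) applied to $g=f^{(k)}-P_n^{(k)}$, and Lemma~\ref{lemNSP} applied to $P_n^{(k)}$ (legitimate since $\d\le(Mn)^{-1}$), I get
\begin{equation*}
\Omega_{r-k}^\vp(f^{(k)},\d)_{p,\vp^k}^p\le C\Vert\vp^k(f^{(k)}-P_n^{(k)})\Vert_p^p+C\d^{-kp}\w_r^\vp(P_n,\d)_p^p.
\end{equation*}
By~\eqref{eqM0} and~\eqref{eq.Sec3.M1}, $\w_r^\vp(P_n,\d)_p^p\le C(\w_r^\vp(f,\d)_p^p+E_n(f)_p^p)$, and by Lemma~\ref{lemJP} together with Lemma~\ref{lem.modP} and $1/n\le C\d$ one has $E_n(f)_p\le C\w_r^\vp(f,1/n)_p\le C\w_r^\vp(f,\d)_p$. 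Combining these with the estimate of the previous paragraph and $n^{kp}\asymp\d^{-kp}$ gives
\begin{equation*}
\Omega_{r-k}^\vp(f^{(k)},\d)_{p,\vp^k}^p\le C\Big(\d^{-kp}\w_r^\vp(f,\d)_p^p+\sum_{\nu>n}\nu^{kp-1}E_\nu(f)_p^p\Big).
\end{equation*}

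It remains to convert the right-hand side back into the integral. For the first term, Lemma~\ref{lem.modP} gives $\w_r^\vp(f,\d)_p\le C\w_r^\vp(f,t)_p$ for $t\in[\d/2,\d]$, hence $\d^{-kp}\w_r^\vp(f,\d)_p^p\le C\int_{\d/2}^\d\w_r^\vp(f,t)_p^p\,t^{-kp-1}\,{\rm d}t\le C\int_0^\d\w_r^\vp(f,t)_p^p\,t^{-kp-1}\,{\rm d}t$. For the second term, Lemma~\ref{lemJP} and the sum--integral comparison restricted to $\nu>n$ give $\sum_{\nu>n}\nu^{kp-1}E_\nu(f)_p^p\le C\int_0^{1/n}\w_r^\vp(f,t)_p^p\,t^{-kp-1}\,{\rm d}t$; splitting $\int_0^{1/n}=\int_0^\d+\int_\d^{1/n}$ and bounding $\int_\d^{1/n}\w_r^\vp(f,t)_p^p\,t^{-kp-1}\,{\rm d}t\le C\w_r^\vp(f,1/n)_p^p\,\d^{-kp}\le C\d^{-kp}\w_r^\vp(f,\d)_p^p$ (again by $1/n\le C\d$ and Lemma~\ref{lem.modP}), the second term is also $\le C\int_0^\d\w_r^\vp(f,t)_p^p\,t^{-kp-1}\,{\rm d}t$. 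This proves~\eqref{eqthsec3.B.1P}.

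The main obstacle is the coupling in the second step between the degree $n$ of the best-approximating polynomial and the scale $\d$: one must take $n\asymp\d^{-1}$ small enough for Lemma~\ref{lemNSP} to be applicable to $P_n$ at the parameter $\d$, yet still keep $1/n\le C\d$ so that the ``reverse monotonicity'' estimate of Lemma~\ref{lem.modP} can absorb $\w_r^\vp(f,1/n)_p$ and $\w_r^\vp(f,\d/2)_p$ into $\w_r^\vp(f,\d)_p$ and, ultimately, into $\int_0^\d\w_r^\vp(f,t)_p^p\,t^{-kp-1}\,{\rm d}t$. Everything else is routine bookkeeping with the $p$-th power quasi-triangle inequality.
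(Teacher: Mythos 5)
The paper never proves Theorem~\ref{thsec3.BP}: it is simply recalled from Ditzian and Tikhonov~\cite{DiTi07}, so there is no internal proof to compare yours against. What you supply is a legitimate self-contained derivation from the paper's own toolkit, and it follows precisely the template the author uses for the sibling statements (Theorem~\ref{thsec3.B} from Theorem~\ref{thsec3.A} plus Lemmas~\ref{lemNST} and~\ref{lemJT}; Theorem~\ref{thModFrD} from Theorem~\ref{th2S} plus Lemmas~\ref{lemNSS} and~\ref{lemJS}): here the inverse theorem is Theorem~\ref{thsec3.AP}, the Stechkin--Nikolskii ingredient is Lemma~\ref{lemNSP}, Jackson is Lemma~\ref{lemJP}, and, since no analogue of \eqref{eqM2} is stated for $\w_r^\vp$, you correctly substitute Lemma~\ref{lem.modP} (or the doubling property \eqref{eq.*}) both to absorb $\w_r^\vp(f,1/n)_p$ and $\w_r^\vp(f,\d/2)_p$ into $\w_r^\vp(f,\d)_p$ and to run the sum--integral comparisons. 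The individual steps check out: deducing $\sum_\nu \nu^{kp-1}E_\nu(f)_p^p<\infty$ from the finiteness of the integral, applying Theorem~\ref{thsec3.AP} (which also yields $\vp^k f^{(k)}\in L_p$ and the existence of $f^{(k)}$ in the sense \eqref{SenseSerL[-1,1]}), the bound $\Omega_{r-k}^\vp(g,\d)_{p,\vp^k}\le C\Vert \vp^k g\Vert_p$ (used by the paper itself in the proof of Theorem~\ref{th3m}, with reference to the proof of Lemma~5.15 in~\cite{DiTi07}), and the coupling $\d\le (Mn)^{-1}$, $1/n\le C\d$ required to apply Lemma~\ref{lemNSP} to $P_n$ at the parameter $\d$.

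The one caveat is the range of $\d$: your argument establishes \eqref{eqthsec3.B.1P} only for $0<\d<\d_0$ with $\d_0=\d_0(p,r,k,M)$, whereas the statement is formulated for all $\d>0$. For $\d$ larger than a constant depending only on $r-k$ the main-part modulus stops growing (the restricted interval $[-1+2(r-k)^2h^2,\,1-2(r-k)^2h^2]$ is empty for larger $h$), but on the intermediate range between $\d_0$ and that constant Lemma~\ref{lemNSP} is not applicable at the parameter $\d$ and no \eqref{eqM2}-type scaling is available for $\Omega^\vp$; a standard patch uses the invariance of both sides under subtracting polynomials of degree at most $r-1$ together with equivalence of quasi-norms on the fixed finite-dimensional space $\mathcal{P}_{n_0}$. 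This is a boundary technicality of exactly the kind the paper sidesteps by stating Theorem~\ref{th3m} only for $\d\in(0,\d_0)$, and it does not affect the substance of your proof, but it deserves a sentence if the statement is to be kept with ``$\d>0$''.
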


Finally, from Theorem~\ref{th3m},  Theorem~\ref{thsec3.BP},
Corollary~\ref{corsec3.1E_Alg}, and Lemma~\ref{lemJP}, we get the
following result.
\begin{corollary}\label{corsec3.1ModP}
{\it Under the conditions of Corollary~\ref{corsec3.1E_Alg}, the
following assertions are equivalent for any $k\in \N$:

$(i)$ $\w_{r+k}^\vp(f,\d)_p=\mathcal{O}(\d^{r+\a})\,, \quad
\d\rightarrow 0\,,$

$(ii)$ $\Omega_{k}^\vp(f^{(r)},\d)_{p,\vp^r}=\mathcal{O}(\d^{\a})$
and $E_n(f^{(r)})_{p,\vp^r}=\mathcal{O}(n^{-\a})$ as $\d\rightarrow
0$ and $n\to \infty$.}
\end{corollary}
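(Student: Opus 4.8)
The plan is to prove the two implications $(i)\Rightarrow(ii)$ and $(ii)\Rightarrow(i)$ separately. For the forward direction I would combine the Jackson inequality (Lemma~\ref{lemJP}) with Corollary~\ref{corsec3.1E_Alg} to get the decay of $E_n(f^{(r)})_{p,\vp^r}$, and use the inverse estimate of Theorem~\ref{thsec3.BP} to get the decay of $\Omega_k^\vp(f^{(r)},\d)_{p,\vp^r}$. For the backward direction I would feed the two rates in $(ii)$ into the direct estimate of Theorem~\ref{th3m}. In all the exponent arithmetic the standing hypothesis $\a>2/p-2$ is what makes things work (and for $0<p<1$ it also forces $\a>0$).

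For $(i)\Rightarrow(ii)$, assume $\w_{r+k}^\vp(f,\d)_p=\mathcal{O}(\d^{r+\a})$. By Lemma~\ref{lemJP}, $E_n(f)_p\le C\,\w_{r+k}^\vp(f,1/n)_p=\mathcal{O}(n^{-r-\a})$, so Corollary~\ref{corsec3.1E_Alg} gives $E_n(f^{(r)})_{p,\vp^r}=\mathcal{O}(n^{-\a})$, which is the second part of $(ii)$. For the first part, apply Theorem~\ref{thsec3.BP} with outer order $r+k$ and differentiation order $r$ (admissible since $r<r+k$); this yields
\[\Omega_k^\vp(f^{(r)},\d)_{p,\vp^r}\le C\(\int_0^\d\frac{\w_{r+k}^\vp(f,t)_p^p}{t^{pr+1}}\,{\rm d}t\)^{1/p}\le C\(\int_0^\d t^{\a p-1}\,{\rm d}t\)^{1/p}=\mathcal{O}(\d^{\a}),\]
the integral converging at $0$ because $\a p>0$; this also makes the right-hand side of~\eqref{eqthsec3.B.1P} finite, so the theorem genuinely applies.

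For $(ii)\Rightarrow(i)$, assume $\Omega_k^\vp(f^{(r)},\d)_{p,\vp^r}=\mathcal{O}(\d^{\a})$ and $E_n(f^{(r)})_{p,\vp^r}=\mathcal{O}(n^{-\a})$. Since $1-2p-\a p<-1$ is equivalent to $\a>2/p-2$, the second rate makes~\eqref{eqthsec3.2.1P} finite, so Theorem~\ref{th3m} applies and
\[\w_{r+k}^\vp(f,\d)_p\le C\d^r\Omega_k^\vp(f^{(r)},\d)_{p,\vp^r}+C\d^{\,r+\frac2p-2}\(\sum_{\nu=[1/\d]}^\infty\nu^{1-2p}E_{\nu-r}(f^{(r)})_{p,\vp^r}^p\)^{1/p}.\]
The first term is $\mathcal{O}(\d^r\cdot\d^{\a})=\mathcal{O}(\d^{r+\a})$. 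In the second term $E_{\nu-r}(f^{(r)})_{p,\vp^r}^p=\mathcal{O}(\nu^{-\a p})$, so the tail sum is $\sum_{\nu\ge[1/\d]}\mathcal{O}(\nu^{1-2p-\a p})=\mathcal{O}(\d^{\,\a p+2p-2})$ (convergence and the rate both using $\a>2/p-2$), whence the second term is $\mathcal{O}(\d^{\,r+\frac2p-2}\cdot\d^{\,\a+2-\frac2p})=\mathcal{O}(\d^{r+\a})$. Adding the two, $\w_{r+k}^\vp(f,\d)_p=\mathcal{O}(\d^{r+\a})$.

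The argument is essentially exponent bookkeeping; the one point needing care is that the summability condition~\eqref{eqthsec3.2.1P} required to invoke Theorem~\ref{th3m} (and the finiteness of the integral in Theorem~\ref{thsec3.BP}) are exactly guaranteed by the hypothesis $\a>2/p-2$ (resp. $\a>0$). This is the natural obstruction: for $\a\le 2/p-2$ the tail series in Theorem~\ref{th3m} diverges, so that is where the restriction on $\a$ genuinely enters.
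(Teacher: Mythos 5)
Your proposal is correct and follows exactly the route the paper intends: the paper states this corollary with no written proof, merely noting that it follows from Theorem~\ref{th3m}, Theorem~\ref{thsec3.BP}, Corollary~\ref{corsec3.1E_Alg}, and Lemma~\ref{lemJP}, which is precisely the combination (and exponent bookkeeping, with $\a>2/p-2$ ensuring convergence of the tail series and the integral) that you carry out.
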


%\begin{acknowledgements}
%If you'd like to thank anyone, place your comments here
%and remove the percent signs.
%\end{acknowledgements}

% BibTeX users please use one of
%\bibliographystyle{spbasic}      % basic style, author-year citations
%\bibliographystyle{spmpsci}      % mathematics and physical sciences
%\bibliographystyle{spphys}       % APS-like style for physics
%\bibliography{}   % name your BibTeX data base

\begin{thebibliography}{99}
%
% and use \bibitem to create references. Consult the Instructions
% for authors for reference list style.
%
%\bibitem{RefJ}
%% Format for Journal Reference
%Author, Article title, Journal, Volume, page numbers (year)
%% Format for books
%\bibitem{RefB}
%Author, Book title, page numbers. Publisher, place (year)
\bibitem{BeSh} Bennett, C., Sharpley, R.: Interpolation of Operators,  Academic Press, New York (1988)

\bibitem{CzFr} Czipszer, J.,  Freud, G.: Sur l’approximation d’une fonction p\'eriodique et de ses d\'eriv\'ees successives par un polynome trigonom\'etrique et par ses d\'eriv\'ees successives. Acta Math. \textbf{99}, 33--51 (1958)

\bibitem{DLY} DeVore, R.A., Leviatan, D.,  Xiang Ming, Yu: Polynomial approximation in $L_p$, $0<p<1$. Constr. Approx. \textbf{8}(2), 187--201 (1992)


\bibitem{DeLo} DeVore, R.A.,  Lorentz, G.G.:  Constructive Approximation. Springer-Verlag, New York (1993)

\bibitem{Di} Ditzian, Z.: A Note on Simultaneous Polynomial Approximation in $L_p[-1,1]$, $0 < p < 1$. J. Approx. Theory \textbf{82}(2),
317--319 (1995)

\bibitem{DJL} Ditzian, Z., Jiang, D., Leviatan, D.: Inverse theorem for best polynomial approximation in $L_p$, $0 < p < 1$.
Proc. Amer. Math. Soc. \textbf{120}(1), 151–-154 (1994)

\bibitem{DHI} Ditzian, Z., Hristov, V., Ivanov, K.:  Moduli of smoothness and $K$-functional in $L_p$, $0<p<1$.
Constr. Approx. \textbf{11}, 67--83 (1995)

\bibitem{DiTi05}
Ditzian, Z., Tikhonov, S.:
Ul'yanov and Nikol'skii-type inequalities. J. Approx. Theory  \textbf{133}(1), 100--133 (2005)

\bibitem{DiTi07} Ditzian, Z., Tikhonov, S.: Moduli of smoothness of
functions and their derivatives.  Studia Math. \textbf{180}(2),
143--160 (2007)


\bibitem{DiTo} Ditzian, Z., Totik, V.: Moduli of Smoothness. Springer-Verlag, Berlin-New York (1987)

\bibitem{johnen}
Johnen, H., Scherer K.: On the equivalence of the $K$-functional and moduli of
continuity and some applications, Constructive theory of functions of several variables
(Proc. Conf., Math. Res. Inst., Oberwolfach 1976), Lecture Notes in Math., vol. 571,
Springer-Verlag, Berlin--Heidelberg 1977, pp. 119--140.

\bibitem{HuLi05}  Hu, Y.,  Liu, Y.: On equivalence of moduli of smoothness of polynomials in $L_p$, $0<p \leq \infty$. J. Approx. Theory \textbf{136}(2), 182--197 (2005)

\bibitem{HuYu}  Hu, Y.,  Yu, X.M.: Discrete modulus of smoothness of splines with equally spaced knots. SIAM J.~Numer. Anal. \textbf{32}(5), 1428--1435 (1995)


\bibitem{Iv}  Ivanov, V.I.: Direct and inverse theorems of approximation theory in the metrics $L_p$ for $0<p<1$.
Math. Notes \textbf{18}(5),  972--982 (1975)

\bibitem{Ko03} Kolomoitsev, Yu.S.: Description of a class of
functions with the condition $\omega_r(f,h)_p\le Mh^{r-1+1/p}$ for
$0<p<1$.  Vestn. Dnepr. Univ., Ser. Mat.  \textbf{8},  31--43 (2003)

\bibitem{Ko07}
 Kolomoitsev, Yu.S.: On moduli of smoothness and Fourier multipliers in $L_p$, $0<p<1$.
Ukr. Math. J. \textbf{59} (9),   1364--1384 (2007)


\bibitem{KLP}  Kolomoitsev, Yu., Lomako, T.,  Prestin, J.: On approximation of functions by algebraic polynomials in H\"older spaces.  Math. Nachr. Version of Record online: 10 May 2016, 1--21 (2016)

\bibitem{KP}  Kolomoitsev, Yu.,  Prestin, J.: Sharp estimates of approximation of periodic functions in H\"older spaces. J. Approx. Theory \textbf{200}, 68--91 (2015)



\bibitem{Kop95} Kopotun, K.A.: On $K$-monotone Polynomial and Spline Approximation in $L_p$, $0<p<\infty$  (quasi)norm. Approximation Theory VIII, World Scientific Publishing Co., C. Chui and L. Schumaker (eds.), 295--302 (1995)


 \bibitem{Kop95_2} Kopotun, K.A.:  A Note on Simultaneous Approximation in $L_p [-1, 1]$ $(1\le p<\infty)$. Analysis \textbf{15}, 151--158 (1995)

\bibitem{Kop98}   Kopotun, K.A.: Approximation of $k$-Monotone Functions. J. Approx. Theory \textbf{94}(3),  481--493 (1998)

\bibitem{Kop06} Kopotun, K.A.: On equivalence of moduli of smoothness of
splines in $L_p$, $0<p<1$. J. Approx. Theory \textbf{143}(1),  36--43 (2006)


\bibitem{Kr83}  Krotov, V.G.: On differentiability of functions in $L_p$, $0<p<1$.
Sb. Math. USSR \textbf{25}, 101--119 (1983)

\bibitem{PePo}  Petrushev, P.P.,  Popov, V.A.: Rational Approximation of Real Functions. Cambridge University Press, Cambridge, UK, (1987)

\bibitem{Rath94}  Rathore, R.: The problem of A.F. Timan on the precise order of decrease of the best approximations.
J. Approx. Theory  \textbf{77}(2), 153--166 (1994)

\bibitem{simonov-sb}
Simonov, B.V., Tikhonov, S.Yu.:
Embedding theorems in constructive approximation.
Sb. Math. \textbf{199}(9), 1367--1407 (2008); translation from Mat. Sb. \textbf{199}(9), 107--148 (2008)

\bibitem{SKO}  Storozhenko, E.A.,  Krotov, V.G.,  Oswald, P.: Direct and converse theorems of Jackson type in $L_p$ spaces, $0<p<1$.
Math. USSR-Sb. \textbf{27},  355--374 (1975)

\bibitem{SO}  Storozhenko, E.A.,  Oswald, P.:  Jackson's theorem in the spaces $L\sp{p}({\R}\sp{k})$, $0<p<1$.
Sibirsk. Mat. Z. \textbf{19}(4), 888--901 (1978)

\bibitem{Os80} Oswald, P.:  Approximation by splines in the metric $L_p$, $0<p<1$. Math. Nachr. \textbf{94}, 69--96 (1980)

\bibitem{TB}  Trigub, R.M.,  Belinsky, E.S.: Fourier Analysis and
Appoximation of Functions, Kluwer (2004)


% etc
\end{thebibliography}

% Non-BibTeX users please use

\end{document}